\newtheorem{theorem}{Theorem}
\newtheorem{lemma}[theorem]{Lemma}
\newtheorem{proposition}[theorem]{Proposition}
\newtheorem{problem}{Problem}
\newcommand\dd{\,\mbox{d}}
\newcommand\NN{{\mathbb N}}
\newcommand\RR{{\mathbb R}}
\DeclareTextCompositeCommand{\v}{OT1}{l}{l\nobreak\hspace{-.1em}'}
\DeclareTextCompositeCommand{\v}{OT1}{t}{t\nobreak\hspace{-.1em}'\nobreak\hspace{-.15em}}
\def\cepsfbox#1{$\vcenter{\hbox{\epsfbox{#1}}}$}
\begin{document}
\title{Quasirandom forcing orientations of cycles\thanks{The first author was supported by the National Science Centre grant number 2021/42/E/ST1/00193. The third and fourth authors were supported by the MUNI Award in Science and Humanities (MUNI/I/1677/2018) of the Grant Agency of Masaryk University.}}

\author{Andrzej Grzesik\thanks{Faculty of Mathematics and Computer Science, Jagiellonian University, {\L}ojasiewicza 6, 30-348 Krak\'{o}w, Poland. E-mail: {\tt Andrzej.Grzesik@uj.edu.pl}.}\and
        Daniel I\v{l}kovi\v{c}\thanks{Faculty of Informatics, Masaryk University, Botanick\'a 68A, 602 00 Brno, Czech Republic. E-mail: {\tt 493343@mail.muni.cz}.}\and
	Bart\l{}omiej Kielak\thanks{Faculty of Mathematics and Computer Science, Jagiellonian University, {\L}ojasiewicza 6, 30-348 Krak\'{o}w, Poland. E-mail: {\tt bartlomiej.kielak@doctoral.uj.edu.pl}.}\and
        Daniel Kr{\'a}\v{l}\thanks{Faculty of Informatics, Masaryk University, Botanick\'a 68A, 602 00 Brno, Czech Republic. E-mail: {\tt dkral@fi.muni.cz}.}}
\date{} 
\maketitle

\begin{abstract}
An oriented graph $H$ is quasirandom-forcing
if the limit (homomorphism) density of $H$ in a sequence of tournaments
is $2^{-\|H\|}$ if and only if the sequence is quasirandom.
We study generalizations of the following result:
the cyclic orientation of a cycle of length $\ell$ is quasirandom-forcing if and only if $\ell\equiv 2\mod 4$.

We show that no orientation of an odd cycle is quasirandom-forcing.
In the case of even cycles,
we find sufficient conditions on an orientation to be quasirandom-forcing,
which we complement by identifying necessary conditions.
Using our general results and spectral techniques used to obtain them,
we classify which orientations of cycles of length up to $10$ are quasirandom-forcing.
\end{abstract}

\section{Introduction}
\label{sec:intro}

The study of quasirandom structures,
i.e., structures with properties close to random structures,
is a classical topic in combinatorics.
Indeed, the notion of quasirandom graphs (as we understand it today)
appeared in the works of R\"odl~\cite{Rod86}, Thomason~\cite{Tho87,Tho87b} and
Chung, Graham and Wilson~\cite{ChuGW89} in the 1980s:
a sequence $(G_n)_{n\in\NN}$ of graphs is \emph{quasirandom}
if the density of any graph $H$ in the sequence converges to its expected density
in the Erd\H os-R\'enyi random graph with edge probability $1/2$.
This definition of graph quasirandomness turned out to be very robust as
it is equivalent to many additional properties that the Erd\H os-R\'enyi random graph has almost surely.
Somewhat surprisingly,
a sequence $(G_n)_{n\in\NN}$ of graphs is quasirandom if and only if
the densities of $K_2$ and $C_4$ converge to their expected densities.
Chung, Graham and Wilson~\cite{ChuGW89} showed that $C_4$ can be replaced with any even cycle, and
Skokan and Thoma~\cite{SkoT04} showed that $C_4$ can be replaced with any complete bipartite graph $K_{a,b}$ with $a,b\ge2$.
The Forcing Conjecture by Conlon, Fox and Sudakov~\cite{ConFS10},
a generalization of the famous Sidorenko's Conjecture,
asserts that $C_4$ can be replaced with any bipartite graph containing a cycle.
We also refer the reader to~\cite{ConHPS12,HubKPP19,ReiS19} for results concerning pairs of graphs
whose densities force quasirandomness but the pair does not contain $K_2$.

Quasirandomness has been studied in the setting of many different kinds of combinatorial structures,
in particular,
groups~\cite{Gow08},
hypergraphs~\cite{ChuG90,ChuG91s,Gow06,Gow07,HavT89,KohRS02,NagRS06,RodS04},
permutations~\cite{ChaKNPSV20,Coo04,KraP13,Kur22},
Latin squares~\cite{CooKLM22,EbeMM22,GarHHS20,GowL20},
subsets of integers~\cite{ChuG92}, etc.
Many of these notions are treated in a unified way in the recent paper by Coregliano and Razborov~\cite{CorR20}.
In the present paper,
we consider quasirandomness of tournaments as studied in~\cite{BucLSS19,ChuG91,CorPS19,CorR17,HanKKMPSV23}.

The starting point of our research is a recent solution of the conjecture of Bartley and Day~\cite{Bar18,Day17}
concerning the maximum density of a cyclically oriented cycle in a tournament.
The \emph{homomorphism density} of an oriented graph $H$ in an oriented graph $G$
is the probability that a random mapping from the vertices of $H$ to $G$ is a homomorphism;
note that the expected homomorphism density of a cyclically oriented cycle of length $\ell$ in a random tournament is $2^{-\ell}$.
For $\ell\ge 3$,
let $c(\ell)$ be the supremum over all $d$ such that there exist arbitrarily large tournaments
with homomorphism density of the cyclically oriented cycle of length $\ell$ at least $d\cdot 2^{-\ell}$.
Classical extremal combinatorics results~\cite{BeiH65,Col64,KenB40,KomM17,Moo15,Sze43} imply that $c(3)=c(5)=1$ and $c(4)=4/3$.
Bartley and Day~\cite{Bar18,Day17} conjectured that $c(\ell)=1$ if and only if $\ell$ is not divisible by four.
This conjecture was proven in~\cite{GrzKLV23} where it was also shown that
if $\ell\equiv 2\mod 4$, then quasirandom sequences of tournaments are the only sequences that
asymptotically maximize density of the cyclically oriented cycle of length~$\ell$,
while if $\ell\not\equiv 2\mod 4$, this property does not hold.

To state our results precisely, we need an additional definition.
We say that an oriented graph $H$ is \emph{quasirandom-forcing}
if a sequence of tournaments is quasirandom if and only if
the homomorphism density of $H$ in the tournaments in the sequence converges to $2^{-\|H\|}$,
where $\|H\|$ stands for the number of edges of $H$.
Among all tournaments, transitive tournaments with at least four vertices and
one exceptional five-vertex tournament are quasirandom-forcing~\cite{CorPS19,CorR17},
while all other tournaments are not quasirandom-forcing~\cite{BucLSS19,HanKKMPSV23}.
The above mentioned result from~\cite{GrzKLV23} says that
a cyclically oriented cycle of length $\ell$ is quasirandom-forcing if and only if $\ell\equiv 2\mod 4$.

Motivated by this result, we address the following question:
\emph{What orientations of a cycle of length $\ell$ are quasirandom-forcing?}.
We first show that no orientation of an odd cycle is quasirandom-forcing (Theorem~\ref{thm:odd}).
In the case of even cycles,
we find general conditions on an orientation that imply that the orientation is quasirandom-forcing, and
also present necessary conditions.
We use the identified conditions to provide a full classification
which orientations of cycles of length four, six, eight and ten are quasirandom-forcing.
While we originally hoped that we will be able to obtain a full classification for cycles of all lengths,
e.g., by considering the number of forward and backward arcs or
through the existence of homomorphisms to a small set of oriented graphs,
it turned out that the boundary between orientations that are and that are not quasirandom-forcing is rather mysterious;
see Section~\ref{sec:concl} for detailed discussion.

The paper is structured as follows.
In Section~\ref{sec:prelim}, we introduce notation used throughout the paper,
including the notions and tools from the theory of combinatorial limits that we use to treat the problem;
we also establish several auxiliary claims related to tournament limits and kernels.
In Section~\ref{sec:general},
we present general results guaranteeing that a particular orientation of a cycle
is quasirandom-forcing, which apply to orientations of cycles of arbitrary length.
We complement these results in Section~\ref{sec:negative} by presenting methods to show that
a particular orientation of a cycle is not quasirandom-forcing.
The general results of Sections~\ref{sec:general} and~\ref{sec:negative} are strong enough
to yield a full classification of orientations of the cycles of length four, six and eight, and
a classification of all but four orientations of the cycle of length ten.
We present the classification results implied by our general results and
the analysis of the four uncovered orientations of the cycle of length ten in Section~\ref{sec:specific}.
Finally,
we conclude in Section~\ref{sec:concl} by discussing prospects of a classification of orientations of cycles of all lengths.

\section{Preliminaries}
\label{sec:prelim}

We treat the problems addressed in this paper using the language and tools from the theory of combinatorial limits.
We next provide a brief introduction to tournament limits and concepts related to them.
We refer the reader to the monograph by Lov\'asz~\cite{Lov12} for general introduction to theory of graph limits, and
to~\cite{ChaGKN19,GrzKLV23,Tho18,ZhaZ20} specifically for limits of oriented graphs and tools employing them.

We start with fixing general notation.
If $H$ is an oriented graph, then $V(H)$ is the vertex set of $H$ and $E(H)$ is the edge set of $H$;
the number of vertices of $H$ is denoted by $|H|$ and the number of edges by $\|H\|$.
If $H$ is an oriented graph and $F$ is a subset of $E(H)$,
then $H\langle F\rangle$ is the spanning subgraph of $H$ with edge set $F$,
i.e., the vertex set of $H\langle F\rangle$ is $V(H)$ and the edge set is $F$.
An oriented graph is a \emph{tournament}
if every pair of vertices of $H$ is joined by exactly one edge (oriented in one of the two possible directions).
Two specific orientations of paths and cycles with even edges will play an important role in our arguments:
for a positive integer $k$,
$Q_{2k}$ is the orientation of the $2k$-edge path such that
all edges are oriented from the central vertex of the path towards its two end vertices, and
$D_{2k}$ is the orientation of the $2k$-vertex cycle obtained by choosing two opposite vertices and
orienting all edges in the direction from one of them to the other.
See Figure~\ref{fig:QD} for the notation.

\begin{figure}
\begin{center}
\epsfbox{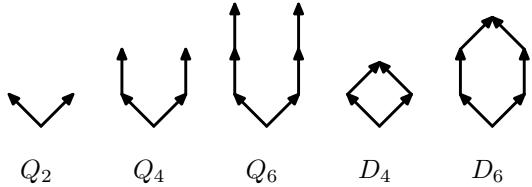}
\end{center}
\caption{The oriented graphs $Q_2$, $Q_4$, $Q_6$, $D_4$ and $D_6$.}
\label{fig:QD}
\end{figure}

As we have already defined in Section~\ref{sec:intro},
a \emph{homomorphism density} of an oriented graph~$H$ in an oriented graph $G$, denoted by $t(H,G)$
is the probability that a random mapping from the vertices of $H$ to $G$ is a homomorphism.
We say that a sequence of oriented graphs $(G_n)_{n\in\NN}$ is \emph{convergent}
if the sequence $t(H,G_n)$ converges for every oriented graph~$H$.
A~\emph{kernel} is a bounded measurable function from $[0,1]^2$ to $\RR$, and
a \emph{tournamenton} is a non-negative kernel $W$ such that $W(x,y)+W(y,x)=1$ for all $(x,y)\in [0,1]^2$.
We slightly abuse the notation by writing $p$ for $p\in\RR$ for the kernel equal to $p$ everywhere.
If $U$ and $U'$ are two kernels, we write $U\equiv U'$ if $U$ and $U'$ are equal almost everywhere.
A~kernel~$U$ is \emph{antisymmetric} if $U(x,y)=-U(y,x)$ for all $(x,y)\in [0,1]^2$.
In particular, if $W$ is a tournamenton, then $W-1/2$ is an antisymmetric kernel.
We often view a kernel~$U$ as an operator on $L_2[0,1]$ defined as
\[(Uf)(x)=\int_{[0,1]}U(x,y)f(y)\dd y.\]
We write $U^k$ for the composition of $U$ with itself $k$ times.
For example,
\[U^2(x,y)=\int_{[0,1]}U(x,z)U(z,y)\dd z.\]

We next extend the definition of a homomorphism density from oriented graphs to kernels.
The \emph{homomorphism density} of an oriented graph $H$ in a kernel $U$ as
\[t(H,U)=\int_{[0,1]^{V(H)}}\prod_{vw\in E(H)}U(x_v,x_w)\dd x_{V(H)}.\]
If $(G_n)_{n\in\NN}$ is a convergent sequence of tournaments,
there exists a tournamenton $W$ such that $t(H,W)$ is the limit density of $t(H,G_n)$ for every oriented graph $H$, and vice versa
for every tournamenton $W$, there exists a convergent sequence $(G_n)_{n\in\NN}$ of tournaments such that
$t(H,G_n)$ converges to $t(H,W)$ for every oriented graph $H$.

The definition of a quasirandom-forcing graph using the theory of combinatorial limits can be cast as follows:
an oriented graph $H$ is \emph{quasirandom-forcing}
if every tournamenton~$W$ such that $t(H,W)=2^{-\|H\|}$ satisfies that $W\equiv 1/2$.
As we have already presented, a cyclically oriented cycle of length $\ell$
is quasirandom-forcing if and only if $\ell\equiv 2\mod 4$.

\subsection{Properties of antisymmetric kernels}

We now review some basic properties of antisymmetric kernels.
The next proposition directly follows from the definition of a density of an oriented graph in a kernel.

\begin{proposition}
\label{prop:reorient}
Let $H$ be an oriented graph and
let $H'$ be an oriented graph obtained by reversing the orientation of exactly one edge of $H$.
Every antisymmetric kernel $U$ satisfies that $t(H,U)=-t(H',U)$.
\end{proposition}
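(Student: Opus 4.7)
The plan is to prove Proposition~\ref{prop:reorient} by a direct comparison of the two integrals defining $t(H,U)$ and $t(H',U)$, exploiting the fact that $H$ and $H'$ share the same vertex set and all but one edge.

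First I would fix the edge whose orientation is reversed: say $H$ contains the edge $vw$ and $H'$ contains the edge $wv$ in its place, with $E(H')\setminus\{wv\}=E(H)\setminus\{vw\}$. Since $V(H)=V(H')$, both densities are integrals over $[0,1]^{V(H)}$ with respect to the same measure, so they can be compared integrand-to-integrand.

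Next I would write
\[
t(H',U)=\int_{[0,1]^{V(H)}}U(x_w,x_v)\prod_{ab\in E(H)\setminus\{vw\}}U(x_a,x_b)\dd x_{V(H)},
\]
and observe that the only difference with $t(H,U)$ is the factor $U(x_w,x_v)$ in place of $U(x_v,x_w)$. Applying the antisymmetry $U(x_w,x_v)=-U(x_v,x_w)$ pointwise and pulling the sign out of the integral yields
\[
t(H',U)=-\int_{[0,1]^{V(H)}}\prod_{ab\in E(H)}U(x_a,x_b)\dd x_{V(H)}=-t(H,U),
\]
which is the claim.

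There is essentially no obstacle here: the argument is a one-line manipulation once the two integrals are aligned, and the only point to double-check is that the edge being reversed contributes exactly one factor in the product (which it does, because $H$ is an oriented graph and the edges $vw$ and $wv$ are not both present). The role of the proposition is bookkeeping for later arguments, so I would keep the write-up as brief as the computation above.
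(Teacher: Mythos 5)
Your proof is correct and is essentially the argument the paper has in mind: the paper does not even write it out, stating only that the proposition ``directly follows from the definition of a density of an oriented graph in a kernel,'' and your one-line comparison of the two integrands, using antisymmetry to flip the sign of the single differing factor, is exactly that. Nothing to add.
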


Proposition~\ref{prop:reorient} yields the following.

\begin{proposition}
\label{prop:odd}
Let $H$ be an orientation of a path or a cycle with an odd number of edges.
Every antisymmetric kernel $U$ satisfies that $t(H,U)=0$.
\end{proposition}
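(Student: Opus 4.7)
My plan is to exploit Proposition~\ref{prop:reorient} together with the reflection symmetry of the underlying path or cycle. The key observation is that if $H$ is any orientation of a path or cycle and $H'$ is obtained from $H$ by reversing \emph{every} edge, then $H$ and $H'$ are isomorphic as oriented graphs. This is because the underlying undirected path or cycle admits an automorphism $\sigma$ that reverses the linear or cyclic order of its vertices; under $\sigma$, each edge is mapped to itself with endpoints swapped, so $\sigma$ carries $H$ onto the oriented graph with every edge reversed, i.e., onto $H'$.

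Given this isomorphism, I would compute $t(H',U)$ in two ways. Since $H\cong H'$, we directly get $t(H,U)=t(H',U)$. On the other hand, applying Proposition~\ref{prop:reorient} once for each of the $\|H\|$ edges of $H$ (reversing them one at a time) yields
\[
t(H',U) = (-1)^{\|H\|}\, t(H,U).
\]
Because $\|H\|$ is odd, this gives $t(H',U) = -t(H,U)$. Combining the two computations, $2\,t(H,U) = 0$, and hence $t(H,U) = 0$, as desired.

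I do not anticipate any real obstacle. The only point that requires a moment of care is the assertion that reversing every edge of an orientation of a path or cycle produces an isomorphic oriented graph; this is straightforward from the existence of the order-reversing automorphism of the underlying structure. A slightly more direct alternative, avoiding the explicit isomorphism, is to substitute $U(x_w,x_v) = -U(x_v,x_w)$ under the integral defining $t(H',U)$; antisymmetry immediately produces the factor $(-1)^{\|H\|}$, and one still concludes by identifying $H'$ with $H$ up to relabeling.
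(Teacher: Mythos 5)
Your proof hinges on the claim that reversing \emph{every} edge of an arbitrary orientation $H$ of a path or cycle yields an oriented graph $H'$ isomorphic to $H$. This claim is false in general. For example, take the three-edge path $v_0\to v_1\to v_2\leftarrow v_3$. Reversing all edges gives $v_0\leftarrow v_1\leftarrow v_2\to v_3$. The first has a vertex of in-degree $2$ and no vertex of out-degree $2$, while the second has a vertex of out-degree $2$ and no vertex of in-degree $2$; they are not isomorphic. The supporting assertion in your argument --- that the order-reversing automorphism $\sigma$ of the underlying path ``maps each edge to itself with endpoints swapped'' --- is also incorrect: $\sigma$ maps the edge $\{v_i,v_{i+1}\}$ to $\{v_{n-i},v_{n-i-1}\}$, which is a \emph{different} edge unless $i=(n-1)/2$; hence $\sigma(H)$ is generally not equal to $H'$. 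A similar phenomenon occurs for orientations of cycles that are not cyclic.

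What \emph{is} true, and what the paper's proof exploits, is that the reversal of a \emph{directed} path (all edges forward) is again a directed path, and the reversal of a \emph{cyclically} oriented cycle is again a cyclically oriented cycle. So one first applies exactly your argument to these particular orientations $P$ and $C$ to conclude $t(P,U)=t(C,U)=0$, and then uses Proposition~\ref{prop:reorient} again, reversing one edge at a time, to see that every other orientation of the same underlying path or cycle also has zero density (each reversal only flips the sign, and anything times $\pm 1$ is still $0$). That second step is the missing ingredient: your proof would need to establish $t(H,U)=0$ for a distinguished orientation and then propagate, rather than asserting $H\cong H'$ for all orientations.
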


\begin{proof}
Fix an antisymmetric kernel $U$.
Consider a directed path $P$ (an oriented path with all edges oriented forward) with an odd number of edges.
Since reversing the orientation of all edges of $P$ results in a directed path,
we obtain using Proposition~\ref{prop:reorient} by reversing one edge of $P$ after another that $t(P,U)=-t(P,U)$.
Hence, it holds that $t(P,U)=0$ and so
the homomorphism density of any orientation of a path with an odd number of edges (as it can be obtained by reversing some edges of $P$)
in the kernel $U$ is zero by Proposition~\ref{prop:reorient}.

The argument for orientations of a cycle is analogous.
Consider a cyclically oriented cycle $C$ with an odd number of edges.
Since reversing the orientation of all edges of $C$ results in a cyclically oriented cycle,
we obtain $t(C,U)=-t(C,U)$ by Proposition~\ref{prop:reorient} and so $t(C,U)=0$.
Hence, the homomorphism density of any orientation of a cycle with an odd number of edges in $U$ is zero.
\end{proof}

Recall that a kernel $U$ can be viewed as a linear operator on $L_2[0,1]$.
The next bound on spectral radius of the operator $U^2$ for an antisymmetric kernel with $\|U\|_{\infty}\le 1/2$
is implied by~\cite[Lemma 12]{GrzKLV23};
we remark that
the bound is tight as
it is attained for the antisymmetric kernel $U$ such that $U(x,y)=1/2$ for $x<y$ and $U(x,y)=-1/2$ for $x>y$.

\begin{proposition}
\label{prop:radius}
Let $U:[0,1]^2\to [-1/2,1/2]$ be an antisymmetric kernel.
The spectral radius of the operator $U^2$ is at most $1/\pi^2$.
\end{proposition}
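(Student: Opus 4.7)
The natural first step is to translate the spectral radius bound on $U^2$ into an operator-norm bound on $U$. For any antisymmetric $U$, viewed as an integral operator on $L_2[0,1]$, one has $U^\ast = -U$, so $U$ is skew-adjoint. Then $-U^2 = U^\ast U$ is positive self-adjoint, and by the $C^\ast$-identity its spectral radius equals $\|U^\ast U\|_{\mathrm{op}} = \|U\|_{\mathrm{op}}^2$. Since $U^2$ and $-U^2$ differ only by a sign, they have the same spectral radius, so the proposition reduces to proving the operator-norm bound $\|U\|_{\mathrm{op}} \le 1/\pi$.

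The cleanest way to establish this operator-norm bound is to invoke \cite[Lemma~12]{GrzKLV23}, which asserts exactly that every antisymmetric kernel $U$ with $\|U\|_\infty \le 1/2$ satisfies $\|Uf\|_2 \le \|f\|_2/\pi$ for every $f\in L_2[0,1]$. Combined with the reduction above this finishes the proof. As a check on the constant, I would verify tightness by diagonalizing the extremal sign kernel $U_0(x,y) = \tfrac{1}{2}\mathrm{sign}(y-x)$: the eigenvalue equation $U_0 f = \lambda f$ becomes, upon differentiating in $x$, the ODE $-f = \lambda f'$ with the boundary condition $f(0) + f(1) = 0$ (forced by $(U_0 f)(0) = -(U_0 f)(1)$). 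The solutions $f(x) = c\, e^{-x/\lambda}$ then yield $e^{-1/\lambda} = -1$, giving $\lambda \in \{i/((2k+1)\pi) : k \in \ZZ\}$ and hence $\|U_0\|_{\mathrm{op}} = 1/\pi$, matching the sharpness claim in the statement.

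If one wanted a self-contained proof avoiding the cited lemma, the main obstacle would be showing that the sign kernel is indeed extremal among all antisymmetric kernels with $L_\infty$ norm at most $1/2$. A direct Cauchy--Schwarz estimate only gives $\|U\|_{\mathrm{op}} \le 1/2$, so antisymmetry must be exploited nontrivially. I would try to factor $U(x,y) = \tfrac{1}{2}\mathrm{sign}(y-x)\,s(x,y)$ with $s$ symmetric and $|s| \le 1$, and then seek a Wirtinger-type inequality --- perhaps obtained by integrating by parts against a primitive of $f$ --- that bounds $\|Uf\|_2$ in terms of the action of $U_0$. This rearrangement step is the technical heart of \cite[Lemma~12]{GrzKLV23} and is what I would expect to be the hardest part of a self-contained write-up.
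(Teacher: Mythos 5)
Your proof is correct and takes essentially the same approach as the paper: both rely on the bound from \cite[Lemma~12]{GrzKLV23}. The paper simply states the proposition ``is implied by'' that lemma without elaboration, whereas you additionally spell out the (correct) reduction from the spectral radius of $U^2$ to the operator norm of $U$ via skew-adjointness of $U$ and the $C^\ast$-identity, and verify sharpness on the sign kernel.
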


Using Proposition~\ref{prop:radius},
we bound the density of an oriented graph $Q_{2m}$ using the density of $Q_{2k}$ for $k<m$ in an antisymmetric kernel as follows.

\begin{lemma}
\label{lm:positive}
For every $k\in\NN$ and every antisymmetric kernel $U:[0,1]^2\to [-1/2,1/2]$,
it holds that $t(Q_{2k},U)\ge 0$.
In addition, it holds that
\[t(Q_{2m},U)\le \frac{t(Q_{2k},U)}{\pi^{2(m-k)}}\]
for every $m\ge k+1$.
\end{lemma}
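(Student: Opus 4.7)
The plan is to express $t(Q_{2k},U)$ as the squared $L_2$-norm of a natural function, which gives nonnegativity for free, and then to read off the ratio bound via the spectral theorem applied to the self-adjoint operator $U^2$.

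I would start by unpacking the definition of homomorphism density. The oriented graph $Q_{2k}$ consists of two directed paths of length $k$ sharing their common source (the central vertex), so integrating from the leaves inward gives
\[
t(Q_{2k},U) = \int_0^1 \left( \int_{[0,1]^k} \prod_{i=0}^{k-1} U(x_i,x_{i+1})\dd x_1\cdots\dd x_k\right)^{\!2} \dd x_0.
\]
Letting $\mathbf{1}$ denote the constant $1$ function on $[0,1]$ and viewing $U$ as an operator on $L_2[0,1]$, the inner expression is precisely $(U^k\mathbf{1})(x_0)$, so $t(Q_{2k},U) = \|U^k\mathbf{1}\|_2^2\ge 0$. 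This immediately settles the first assertion.

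For the ratio bound I would exploit antisymmetry: the identity $U(y,x)=-U(x,y)$ translates by Fubini into $U^*=-U$ at the operator level, so $U^2$ is self-adjoint and $A:=-U^2 = U^*U$ is positive semidefinite. A short manipulation then gives
\[
\|U^k\mathbf{1}\|_2^2 = \langle (U^*)^k U^k\mathbf{1},\mathbf{1}\rangle = (-1)^k\langle U^{2k}\mathbf{1},\mathbf{1}\rangle = \langle A^k\mathbf{1},\mathbf{1}\rangle.
\]
By the spectral theorem applied to $A$, there is a finite Borel measure $\mu$ supported on the spectrum of $A$, contained in $[0,r]$ where $r$ is the spectral radius of $A$, such that $\langle A^j\mathbf{1},\mathbf{1}\rangle = \int_0^r \lambda^j\dd\mu(\lambda)$ for every $j\ge 0$. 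The desired inequality follows by bounding $\lambda^{m-k}\le r^{m-k}$ inside the integral and invoking Proposition~\ref{prop:radius}, which yields $r\le 1/\pi^2$ after noting that the spectra of $A$ and $U^2$ are negatives of each other and hence share the same radius.

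No substantial obstacle is expected; the only points requiring a moment of care are the conversion of kernel antisymmetry to operator antisymmetry $U^*=-U$ (a routine Fubini check, which also verifies that $U$ is a bounded operator thanks to $\|U\|_\infty\le 1/2$) and the identification of the spectral radius of $-U^2$ with that of $U^2$. The conceptual heart of the argument is the observation $t(Q_{2k},U)=\|U^k\mathbf{1}\|_2^2$, which simultaneously makes nonnegativity transparent and sets up the spectral comparison between the densities of $Q_{2k}$ and $Q_{2m}$.
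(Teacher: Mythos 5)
Your proof is correct and takes essentially the same approach as the paper: both start from the identity $t(Q_{2k},U)=\|U^k\mathbf{1}\|_2^2$ to get nonnegativity, and both obtain the ratio bound by comparing against the spectral radius of $U^2$ via Proposition~\ref{prop:radius}. The only cosmetic difference is that the paper phrases the comparison as $|\langle U^k h_1, U^{2m-k}h_1\rangle|\le\rho^{m-k}\langle U^k h_1, U^k h_1\rangle$ using the operator norm of $(U^2)^{m-k}$, whereas you pass through the scalar spectral measure of $-U^2$, which is the same spectral-theorem argument in slightly different clothing.
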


\begin{proof}
Fix an antisymmetric kernel $U:[0,1]^2\to [-1/2,1/2]$ and an integer $k\in\NN$.
Let $h_1:[0,1]\to\RR$ be the function equal to $1$ for all $x\in [0,1]$.
Observe that
\[t(Q_{2k},U)=\int_{[0,1]} (U^{k}h_1)(x)^2 \dd x=\langle U^{k}h_1,U^{k}h_1\rangle,\]
which implies that $t(Q_{2k},U)\ge 0$.
Let $\rho$ be the spectral radius of $U^2$; note that $\rho\le 1/\pi^2$ by Proposition~\ref{prop:radius}.
Next consider $m\ge k+1$. Since $U$ is antisymmetric and so $U^2$ is symmetric,
we obtain that
\begin{align*}
 t(Q_{2m},U)&=\langle U^{m}h_1,U^{m}h_1\rangle
              =\left|\langle U^{k}h_1,U^{2m-k}h_1\rangle\right|\\
	     &\le \rho^{m-k}\langle U^{k}h_1,U^{k}h_1\rangle=\rho^{m-k}t(Q_{2k},U),
\end{align*}
which yields the second part of the statement of the lemma.
\end{proof}

Proposition~\ref{prop:radius} and Lemma~\ref{lm:positive} yield the following.

\begin{lemma}
\label{lm:polynomial}
Let $k\in\NN$ and let $\alpha_1,\ldots,\alpha_k$ be reals such that
\[\sum_{j=1}^k\alpha_j x^j\ge 0\]
for every $x\in [0,1/\pi^2]$.
Every antisymmetric kernel $U:[0,1]^2\to [-1/2,1/2]$ satisfies that
\[\sum_{j=1}^k\alpha_j t(Q_{2j},U)\ge 0.\]
\end{lemma}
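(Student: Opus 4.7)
The plan is to diagonalize the operator $U^2$ and apply the polynomial inequality eigenvalue-by-eigenvalue. Since $U$ is bounded, it defines a Hilbert--Schmidt and hence compact operator on $L_2[0,1]$, so $U^2$ is compact as well. Antisymmetry of $U$ gives $U^*=-U$, whence $U^2$ is self-adjoint. The spectral theorem then produces an orthonormal basis $(\phi_i)$ of $L_2[0,1]$ consisting of eigenfunctions of $U^2$ with real eigenvalues $\lambda_i$ accumulating only at $0$.

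Next I would locate the spectrum. For any $f\in L_2[0,1]$,
\[\langle U^2 f,f\rangle = \langle Uf,U^*f\rangle = -\langle Uf,Uf\rangle \le 0,\]
so $U^2$ is negative semidefinite and every $\lambda_i\le 0$. Setting $\mu_i=-\lambda_i\ge 0$, Proposition~\ref{prop:radius} yields $\mu_i\in[0,1/\pi^2]$ for every~$i$.

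The third step is to express $t(Q_{2j},U)$ spectrally. Let $h$ denote the constant function equal to $1$ on $[0,1]$ and expand $h=\sum_i c_i\phi_i$. Combining the identity $t(Q_{2j},U)=\langle U^j h,U^j h\rangle$ from the proof of Lemma~\ref{lm:positive} with $U^*=-U$ gives
\[t(Q_{2j},U) = (-1)^j\langle h,U^{2j}h\rangle = \sum_i(-\lambda_i)^j c_i^2 = \sum_i\mu_i^j c_i^2.\]
Interchanging the (finite) sum over $j$ with the sum over $i$ then produces
\[\sum_{j=1}^k\alpha_j\,t(Q_{2j},U) = \sum_i c_i^2\sum_{j=1}^k\alpha_j\,\mu_i^j,\]
and each inner sum is non-negative by hypothesis because $\mu_i\in[0,1/\pi^2]$, while the weights $c_i^2$ are non-negative.

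The only delicate point is the rigor of the spectral expansion and the interchange of the two sums. Both are standard, however: $U^2$ is a compact self-adjoint operator, so the expansion $h=\sum_i c_i\phi_i$ converges in $L_2$ with $\sum_i c_i^2=\|h\|^2<\infty$, and since the outer sum over $j$ is finite the interchange is automatic.
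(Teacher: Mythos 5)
Your argument is correct and follows essentially the same route as the paper's proof: diagonalize the compact self-adjoint operator $U^2$ (the paper equivalently works with $-U^2$ so that the eigenvalues are non-negative from the start, which is exactly your $\mu_i$), express $t(Q_{2j},U)=\langle U^jh,U^jh\rangle=\sum_i\mu_i^j c_i^2$ via the eigenfunction expansion of the constant function, invoke Proposition~\ref{prop:radius} to place the eigenvalues in $[0,1/\pi^2]$, and then apply the polynomial hypothesis to each eigenvalue before summing. The only cosmetic difference is that the paper restricts attention to the nonzero eigenvalues rather than taking a full orthonormal basis, which has no bearing on the argument; your handling of the sum interchange is also fine since the $j$-sum is finite.
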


\begin{proof}
We first observe that $-U^2$ is a positive semidefinite operator:
indeed, consider $f\in L_2[0,1]$ and note that
\[\langle f,-U^2 f\rangle=\langle Uf,Uf\rangle=\|Uf\|_2^2\ge 0.\]
Since $-U^2$ is a Hilbert–Schmidt operator,
there exists a finite or countably infinite set $I$ and non-zero reals $\lambda_i$, $i\in I$, such that
$\lambda_i$, $i\in I$, are all non-zero eigenvalues of $-U^2$.
Since $-U^2$ is positive semidefinite, it holds that $\lambda_i>0$ for every $i\in I$.
Let $f_i$, $i\in I$, be the corresponding eigenfunctions;
we can assume that $f_i$ are orthonormal.

As in the proof of Lemma~\ref{lm:positive}, 
let $h_1:[0,1]\to\RR$ be the function equal to $1$ for all $x\in [0,1]$.
We next define
\[\sigma_i=\int_{[0,1]}f_i(x)\dd x=\langle f_i,h_1\rangle,\]
and observe that
\[t(Q_{2j},U)=\langle U^jh_1,U^jh_1\rangle=\langle h_1,(-U^2)^jh_1\rangle=\sum_{i\in I}\lambda_i^j\sigma_i^2\]
for every $j\in\NN$.
Since the spectral radius of $-U^2$ is at most $1/\pi^2$ by Proposition~\ref{prop:radius},
we obtain from $0\le\lambda_i\le 1/\pi^2$ that
\[\sum_{j=1}^k\alpha_j\lambda_i^j\ge 0\]
for every $i\in I$.
It follows that
\[\sum_{j=1}^k\alpha_j t(Q_{2j},U)=\sum_{i\in I}\sigma_i^2\sum_{j=1}^k\alpha_j\lambda_i^j\ge 0,\]
as wanted.
\end{proof}

We finish this section with a lemma concerning the density of a cycle $D_{2k}$ in an antisymmetric kernel.

\begin{lemma}
\label{lm:cycle}
Let $k\ge 2$.
Every antisymmetric kernel $U$ satisfies that $t(D_{2k},U)\ge 0$ and
the equality holds if and only if $U\equiv 0$.
\end{lemma}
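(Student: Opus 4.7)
The plan is to identify $t(D_{2k},U)$ with the squared Hilbert--Schmidt norm of the iterated kernel $U^k$. Labelling the two ``opposite'' vertices of $D_{2k}$ as a source $s$ and a sink $t$, the cycle $D_{2k}$ decomposes into two internally disjoint directed paths of length $k$ from $s$ to $t$. Integrating over the internal vertices of each path separately yields
$$t(D_{2k},U)=\int_{[0,1]^2}\bigl(U^k(x,y)\bigr)^2\dd x\dd y\ge 0,$$
which already establishes the inequality.

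For the equality case, one must show that $U^k(x,y)=0$ almost everywhere forces $U\equiv 0$. Since $U$ is antisymmetric, $U^{*}=-U$, and, as in the proof of Lemma~\ref{lm:polynomial}, the operator $-U^2$ is positive semidefinite, self-adjoint, and Hilbert--Schmidt; it admits a spectral decomposition with non-negative eigenvalues $\lambda_i\ge 0$ and an orthonormal system of eigenfunctions $\{f_i\}$. Using the relation $(U^k)^{*}U^k=(-1)^kU^{2k}=(-U^2)^k$, we obtain the identity
$$t(D_{2k},U)=\int_{[0,1]^2}\bigl(U^k(x,y)\bigr)^2\dd x\dd y=\operatorname{tr}\bigl((-U^2)^k\bigr)=\sum_i\lambda_i^k.$$
Since every $\lambda_i\ge 0$ and $k\ge 2$, this sum vanishes if and only if every $\lambda_i$ equals $0$, i.e., $-U^2\equiv 0$ as an operator. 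Then $\|Uf\|_2^2=\langle f,-U^2 f\rangle=0$ for every $f\in L_2[0,1]$, which forces $U\equiv 0$.

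The only substantive obstacle is the equality part: one has to propagate the vanishing of $U^k$ (a $k$-fold iterated kernel) back to the vanishing of $U$ itself. The spectral identity $t(D_{2k},U)=\sum_i\lambda_i^k$ makes this propagation immediate and exactly mirrors the eigenvalue argument already used in the proof of Lemma~\ref{lm:polynomial}, so no additional machinery is needed.
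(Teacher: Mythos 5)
Your proof is correct and follows essentially the same approach as the paper: both reduce $t(D_{2k},U)$ to $\operatorname{tr}\bigl((-U^2)^k\bigr)=\sum_i\lambda_i^k$ via the spectral decomposition of the positive semidefinite operator $-U^2$, and conclude that this vanishes only when every $\lambda_i=0$, i.e.\ $U\equiv 0$. The only cosmetic difference is that you obtain $\int (U^k(x,y))^2\dd x\dd y$ directly from the source-to-sink decomposition of $D_{2k}$ into two directed $k$-paths, whereas the paper first passes to the alternating cycle (using Proposition~\ref{prop:reorient}) and writes the integrand as $(-U^2)(x,y)\,(-U^2)^{k-1}(x,y)$.
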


\begin{proof}
Fix $k\ge 2$ and an antisymmetric kernel $U$.
Let $C$ be the cycle with $2k$ edges whose orientation alternate, and
note that $t(C,U)=t(D_{2k},U)$ by Proposition~\ref{prop:reorient}.
Observe that
\begin{equation}
t(C,U)=\int_{[0,1]^2}(-U^2)(x,y)(-U^2)^{k-1}(x,y)\dd x\dd y.\label{eq:cycle1}
\end{equation}
Similarly to the proof of Lemma~\ref{lm:polynomial},
let $\lambda_i$, for a finite or countably infinite set $I$, be all non-zero eigenvalues of $-U^2$, and
let $f_i$ be the corresponding orthonormal eigenfunctions.
Note that
\begin{equation}
(-U^2)(x,y)=\sum_{i\in I}\lambda_i f_i(x)f_i(y).\label{eq:cycle2}
\end{equation}
Using \eqref{eq:cycle1} and \eqref{eq:cycle2}, we obtain that
\[t(C,U)=\sum_{i\in I}\lambda_i^k.\]
Since the operator $-U^2$ is positive semidefinite,
it holds that $\lambda_i>0$ for every $i\in I$.
Hence, it holds that $t(D_{2k},U)=t(C,U)\ge 0$ and
the equality holds if and only if $I=\emptyset$, i.e., $U\equiv 0$.
\end{proof}

\section{General analysis}
\label{sec:general}

In this section,
we present general results that guarantee a given orientation of a cycle to be quasirandom-forcing.
We start with presenting an identity that is crucial in our arguments.
It describes the density of an oriented cycle $C$ in a tournamenton $W=1/2+U$ in terms of a deviation $U$ from the ``quasirandom'' tournamenton $1/2$, and
it can be straightforwardly derived from the definition of $t(C,W)$, see e.g.~\cite[Proposition~16.27]{Lov12}.
\begin{equation}
t\left(C,\frac{1}{2}+U\right)
 =\!\!\!\sum_{F\subseteq E(C)}\!t\left(C\langle E(C)\setminus F\rangle,\frac{1}{2}\right) t\left(C\langle F\rangle,U\right)
 =\!\!\!\sum_{F\subseteq E(C)}\frac{1}{2^{\|C\|-|F|}}\cdot t\left(C\langle F\rangle,U\right).
\label{eq:expand}
\end{equation}

Since $U$ is an antisymmetric kernel, Proposition~\ref{prop:odd} implies that
if $F\subseteq E(C)$ has an odd number of elements,
then $t\left(C\langle F\rangle,U\right)$ is zero.
Thus, the only non-zero terms in the sum are those with $F$ having an even number of elements.
More generally, if $C\langle F\rangle$ has a component with an odd number of edges,
then $t\left(C\langle F\rangle,U\right)=0$.

Therefore, we need to analyze the terms such that
each component of $C\langle F\rangle$ has an even number of edges.
This leads us to define the following quantities. 
For an oriented cycle~$C$ and positive integers $n_1,\ldots,n_k$,
we set $\alpha_C(2n_1,\ldots,2n_k)$ to be the difference between
\begin{itemize}
\item the number of sets $F\subseteq E(C)$ such that $C\langle F\rangle$ can be obtained from $Q_{2n_1}\cup\cdots\cup Q_{2n_k}$ by reversing an even number of edges and possibly adding some isolated vertices and
\item the number of sets $F\subseteq E(C)$ such that $C\langle F\rangle$ can be obtained from $Q_{2n_1}\cup\cdots\cup Q_{2n_k}$ by reversing an odd number of edges and possibly adding some isolated vertices.
\end{itemize}
In addition, if the length of the cycle $C$ is even, say $\ell$,
we set $\gamma_C$ to $+1$ if $C$ can be obtained from $D_{\ell}$ by reversing an even number of edges, and to $-1$ otherwise.
The just defined quantities $\alpha_C(2n_1,\ldots,2n_k)$ and $\gamma_C$
for all non-isomorphic orientations of cycles of length $4$, $6$ and $8$
can be found in Tables~\ref{tab:4}, \ref{tab:6} and \ref{tab:8}, respectively.

Plugging the just introduced notation in \eqref{eq:expand},
we obtain that the following holds for any orientation $C$ of a cycle of \emph{odd} length $\ell$:
\begin{equation}
t\left(C,\frac{1}{2}+U\right)=\frac{1}{2^{\ell}}+
                              \sum_{\substack{1\le n_1\le\cdots\le n_k\\ k+2n_1+\cdots+2n_k\le\ell}}\frac{\alpha_C(2n_1,\ldots,2n_k)}{2^{\ell-2n_1-\cdots-2n_k}}\prod_{i=1}^k t(Q_{2n_i},U)
\label{eq:odd}
\end{equation}
as $t(Q_{2n_1}\cup\ldots\cup Q_{2n_k},U)=t(Q_{2n_1},U)t(Q_{2n_2},U)\cdots t(Q_{2n_k},U)$.
Similarly, the following holds for any orientation $C$ of a cycle of \emph{even} length $\ell$:
\begin{equation}
t\left(C,\frac{1}{2}+U\right)=\frac{1}{2^{\ell}}+
                              \sum_{\substack{1\le n_1\le\cdots\le n_k\\ k+2n_1+\cdots+2n_k\le\ell}}\frac{\alpha_C(2n_1,\ldots,2n_k)}{2^{\ell-2n_1-\cdots-2n_k}}\prod_{i=1}^k t(Q_{2n_i},U)+\gamma_C t(D_{\ell},U).
\label{eq:even}
\end{equation}

In the rest of the section, we use \eqref{eq:even} and spectral arguments
to identify sufficient conditions on an orientation $C$ of a cycle of even length
to be quasirandom-forcing.
The simplest of them is given in the next theorem.

\begin{theorem}
\label{thm:QRsigns}
Let $C$ be an orientation of an even cycle.
If $\gamma_C=1$ and $\alpha_C(2n_1,\ldots,2n_k)$ is non-negative for all positive integers $n_1,\ldots,n_k$,
then $C$ is quasirandom-forcing.

Similarly, if $\gamma_C=-1$ and $\alpha_C(2n_1,\ldots,2n_k)$ is non-positive for all positive integers $n_1,\ldots,n_k$,
then $C$ is quasirandom-forcing.
\end{theorem}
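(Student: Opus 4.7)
The plan is to apply the expansion \eqref{eq:even} directly, using that each term it contains has a definite sign once the hypotheses on $\alpha_C$ and $\gamma_C$ are imposed. Set $W=1/2+U$ for an arbitrary tournamenton $W$, so $U=W-1/2$ is an antisymmetric kernel with $\|U\|_\infty\le 1/2$. The aim is to show that if $t(C,W)=2^{-\ell}$ (where $\ell=\|C\|$), then $U\equiv 0$, which is precisely the limit-theoretic reformulation of $C$ being quasirandom-forcing given in Section~\ref{sec:prelim}.

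By \eqref{eq:even}, the equality $t(C,W)=2^{-\ell}$ rewrites as
\[
\sum_{\substack{1\le n_1\le\cdots\le n_k\\ k+2n_1+\cdots+2n_k\le\ell}}\frac{\alpha_C(2n_1,\ldots,2n_k)}{2^{\ell-2n_1-\cdots-2n_k}}\prod_{i=1}^k t(Q_{2n_i},U)+\gamma_C\, t(D_{\ell},U)=0.
\]
By Lemma~\ref{lm:positive}, each factor $t(Q_{2n_i},U)$ is non-negative, so every product $\prod_{i=1}^k t(Q_{2n_i},U)$ is non-negative. By Lemma~\ref{lm:cycle}, $t(D_\ell,U)\ge 0$, with equality if and only if $U\equiv 0$.

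In the first case, where $\gamma_C=1$ and every $\alpha_C(2n_1,\ldots,2n_k)$ is non-negative, every summand on the left-hand side is non-negative; since the total is zero, each summand vanishes individually. In particular, $\gamma_C\,t(D_\ell,U)=t(D_\ell,U)=0$, and Lemma~\ref{lm:cycle} then forces $U\equiv 0$. The second case, where $\gamma_C=-1$ and every $\alpha_C(2n_1,\ldots,2n_k)$ is non-positive, is symmetric: every summand is then non-positive, so each must be zero, and once again $t(D_\ell,U)=0$, giving $U\equiv 0$.

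Because the whole argument is an essentially immediate consequence of the sign conventions in the hypothesis combined with the positivity statements of Lemmas~\ref{lm:positive} and~\ref{lm:cycle}, there is no real obstacle; the only subtlety worth checking carefully is that odd-sized or odd-component subsets $F\subseteq E(C)$ contribute nothing to \eqref{eq:expand} (which is why \eqref{eq:even} only involves the $Q_{2n_i}$-products and the $D_\ell$-term), and this has already been recorded via Proposition~\ref{prop:odd} when deriving \eqref{eq:even}.
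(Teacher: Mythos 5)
Your proof is correct and follows essentially the same route as the paper: expand $t(C,1/2+U)$ via \eqref{eq:even}, invoke Lemma~\ref{lm:positive} for the non-negativity of the $t(Q_{2n_i},U)$ factors and Lemma~\ref{lm:cycle} for $t(D_\ell,U)$, and conclude that $t(D_\ell,U)=0$ forces $U\equiv 0$. The only cosmetic difference is that the paper discards the non-negative terms to get the inequality $t(C,W)\ge 2^{-\ell}+t(D_\ell,U)$, whereas you observe directly that a sum of non-negative terms equal to zero has every term zero; these are the same argument.
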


\begin{proof}
Fix an orientation $C$ of a cycle of even length $\ell$ that satisfies the assumption of the theorem.
We present the proof when $\gamma_C=1$ as an argument in the other case is completely symmetric.
Consider a tournamenton $W$ and let $U=W-1/2$.
Since it holds that
$\alpha_C(2n_1,\ldots,2n_k)\ge 0$ for all positive integers $n_1,\ldots,n_k$ by the assumption of the theorem and
$t(Q_{2m},U)\ge 0$ for all positive integers $m$ by Lemma~\ref{lm:positive},
the inequality \eqref{eq:even} simplifies to
\[t(C,W)=t\left(C,\frac{1}{2}+U\right) \ge \frac{1}{2^{\ell}}+t(D_{\ell},U).\]
Hence, $t(C,W)=1/2^{\ell}$ only if $t(D_\ell,U)=0$.
As Lemma~\ref{lm:cycle} yields that $t(D_\ell,U)=0$ if only if $U\equiv 0$,
we obtain that $t(C,W)=1/2^{\ell}$ if only if $W\equiv 1/2$.
\end{proof}

For the additional sufficient conditions on an orientation of a cycle to be quasirandom-forcing,
we need the following estimate on the density of $Q_{2m}$ in an antisymmetric kernel.

\begin{lemma}
\label{lm:Q2}
Every antisymmetric kernel $U:[0,1]^2\to [-1/2,1/2]$ satisfies $t(Q_2,U)\le 1/12$.
More generally, it holds that
\[t(Q_{2m},U)\le\frac{1}{12\pi^{2(m-1)}}\]
for every positive integer $m$.
\end{lemma}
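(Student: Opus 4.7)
The plan is to prove the $m=1$ bound directly, and then deduce the bound for $m\ge 2$ immediately from Lemma~\ref{lm:positive}.

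For the $m=1$ case I would translate the problem into a question about the tournamenton $W=U+\tfrac{1}{2}$, which is a valid tournamenton because $\|U\|_\infty\le 1/2$ and $U$ is antisymmetric. Expanding
\[t(Q_2,U)=\int_{[0,1]^3} U(x,y)\,U(x,z)\dd x\dd y\dd z\]
using $U=W-1/2$ together with $\int W(x,y)\dd x\dd y=1/2$ (which follows from $W(x,y)+W(y,x)=1$), the two linear-in-$W$ terms combine to $-1/4$, yielding the identity $t(Q_2,U)=t(Q_2,W)-1/4$. It therefore suffices to show $t(Q_2,W)\le 1/3$ for every tournamenton~$W$.

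To prove the tournamenton bound $t(Q_2,W)\le 1/3$, I would introduce the transitive triangle $T_3$ (three vertices with a unique source and a unique sink), the cyclic triangle $C_3$, and the directed path $P_2$ on three vertices. Using $W(y,z)+W(z,y)=1$ on the one unconstrained edge in each case, and relabelling integration variables, one verifies
\begin{align*}
t(Q_2,W) &= 2\,t(T_3,W),\\
t(P_2,W) &= t(T_3,W)+t(C_3,W),\\
t(Q_2,W)+t(P_2,W) &= \tfrac{1}{2},
\end{align*}
where the last identity is just $\int d_W(x)\,[d_W(x)+(1-d_W(x))]\dd x=\int d_W(x)\dd x=\tfrac{1}{2}$ with $d_W(x)=\int W(x,y)\dd y$. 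Summing the three relations gives $3\,t(T_3,W)+t(C_3,W)=1/2$, and since $W$ is non-negative we have $t(C_3,W)\ge 0$, whence $t(T_3,W)\le 1/6$ and therefore $t(Q_2,W)\le 1/3$. Combined with the reduction above, this yields $t(Q_2,U)\le 1/12$.

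For $m\ge 2$ the result then follows at once from Lemma~\ref{lm:positive} applied with $k=1$:
\[t(Q_{2m},U)\le \frac{t(Q_2,U)}{\pi^{2(m-1)}}\le \frac{1}{12\,\pi^{2(m-1)}}.\]
I do not expect any serious obstacle. The only real piece of content is the inequality $t(Q_2,W)\le 1/3$, which is the continuous incarnation of the classical fact that the transitive tournament maximizes the variance of the out-degree sequence; its proof reduces to three identities, each a direct consequence of the tournamenton property $W(x,y)+W(y,x)=1$.
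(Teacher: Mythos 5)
Your proof is correct and follows essentially the same route as the paper: both arguments reduce the $m=1$ case to the classical bound $t(T_3,W)\le 1/6$ for tournamentons (your reduction $t(Q_2,U)=t(Q_2,W)-1/4$ together with $t(Q_2,W)=2\,t(T_3,W)$ is exactly the paper's identity $t(T_3,1/2+U)=1/8+t(Q_2,U)/2$ written differently) and then invoke Lemma~\ref{lm:positive} for $m\ge 2$. The only substantive difference is that the paper simply asserts $t(T_3,W)\le 1/6$ as a known extremal fact, whereas you supply a short self-contained derivation of it via the three tournamenton identities (which is the continuous form of the classical Kendall--Babington-Smith/Goodman count of cyclic triangles); that makes your write-up slightly more self-contained but does not change the approach.
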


\begin{proof}
Fix an antisymmetric kernel $U:[0,1]^2\to [-1/2,1/2]$.
By Lemma~\ref{lm:positive}, it is enough to show that $t(Q_2,U)\le 1/12$.
Let $T_3$ be the transitive orientation of the cycle of length three.
The identity \eqref{eq:odd} implies that
\[t(T_3,1/2+U)=1/8+t(Q_2,U)/2.\]
Since $t(T_3,H)\le 1/6$ for every tournament $H$ (the equality is asymptotically attained
if every triple of vertices of $H$ induces a copy of $T_3$),
it holds that $t(T_3,W)\le 1/6$ for every tournamenton $W$.
In particular, it holds that $t(T_3,1/2+U)\le 1/6$,
which implies that $t(Q_2,U)/2\le 1/6-1/8=1/24$.
This yields that $t(Q_2,U)\le 1/12$ as desired.
\end{proof}

We are now ready to prove our second sufficient condition on an orientation of a cycle to be quasirandom-forcing.

\begin{theorem}
\label{thm:QRgeneral}
Let $C$ be an orientation of a cycle of even length $\ell$.
The oriented cycle $C$ is quasirandom-forcing if
\[\gamma_C=1\quad\mbox{and}\quad
  \frac{\alpha_C(2)}{2^{\ell-2}}+\sum_{\substack{1\le n_1\le\cdots\le n_k\\ 3<k+2n_1+\cdots+2n_k\le\ell}}\frac{\min\{0,\alpha_C(2n_1,\ldots,2n_k)\}}{12^{k-1}\cdot\pi^{2n_1+\cdots+2n_k-2k}\cdot 2^{\ell-2n_1-\cdots-2n_k}}\ge 0.\]
  
Similarly, the oriented cycle $C$ is quasirandom-forcing if
\[\gamma_C=-1\quad\mbox{and}\quad
  \frac{\alpha_C(2)}{2^{\ell-2}}+\sum_{\substack{1\le n_1\le\cdots\le n_k\\ 3<k+2n_1+\cdots+2n_k\le\ell}}\frac{\max\{0,\alpha_C(2n_1,\ldots,2n_k)\}}{12^{k-1}\cdot\pi^{2n_1+\cdots+2n_k-2k}\cdot 2^{\ell-2n_1-\cdots-2n_k}}\le 0.\]
\end{theorem}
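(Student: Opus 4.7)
The plan is to apply the expansion \eqref{eq:even} with $U := W - 1/2$, obtaining $t(C, W) - 2^{-\ell} = S + \gamma_C \, t(D_\ell, U)$, where
\[S := \sum_{\substack{1 \le n_1 \le \cdots \le n_k \\ k + 2n_1 + \cdots + 2n_k \le \ell}} \frac{\alpha_C(2n_1, \ldots, 2n_k)}{2^{\ell - 2n_1 - \cdots - 2n_k}} \prod_{i=1}^k t(Q_{2n_i}, U).\]
Lemma \ref{lm:cycle} gives $t(D_\ell, U) \ge 0$ with equality if and only if $U \equiv 0$, so in the case $\gamma_C = 1$ it suffices to show $S \ge 0$: any tournamenton $W$ with $t(C, W) = 2^{-\ell}$ would then satisfy $t(D_\ell, U) = 0$, whence $U \equiv 0$ and $W \equiv 1/2$. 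The case $\gamma_C = -1$ will be handled symmetrically by showing $S \le 0$.

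The heart of the argument is to bound every product in $S$ in terms of a single factor of $q := t(Q_2, U) \ge 0$. Lemma \ref{lm:positive} gives $0 \le t(Q_{2m}, U) \le q/\pi^{2(m-1)}$ for every $m \ge 1$, hence $\prod_{i=1}^k t(Q_{2n_i}, U) \le q^k / \pi^{2n_1 + \cdots + 2n_k - 2k}$. Lemma \ref{lm:Q2} provides $q \le 1/12$; absorbing this bound into $k-1$ of the $k$ copies of $q$ yields
\[0 \le \prod_{i=1}^{k} t(Q_{2n_i}, U) \le \frac{q}{12^{k-1} \, \pi^{2n_1 + \cdots + 2n_k - 2k}},\]
which is the source of the exponent $k - 1$ on $12$ in the statement.

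The $Q_2$ term (the case $k = 1$, $n_1 = 1$) contributes exactly $\alpha_C(2) \, q / 2^{\ell - 2}$ to $S$ and is kept with its sign so as not to discard a possibly positive contribution. Among the remaining terms, those with $\alpha_C < 0$ are lower-bounded via the display above by $\alpha_C \, q / (12^{k-1} \, \pi^{2n_1 + \cdots + 2n_k - 2k} \, 2^{\ell - 2n_1 - \cdots - 2n_k})$, while those with $\alpha_C \ge 0$ are already non-negative and are bounded below trivially by $0$. Summing and factoring out $q$ gives $S \ge q \cdot K$ where $K$ is exactly the bracketed quantity in the hypothesis; since $K \ge 0$ by assumption and $q \ge 0$, we conclude $S \ge 0$. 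The case $\gamma_C = -1$ is identical with $\min$ replaced by $\max$ and all inequalities reversed throughout. The only delicate point is the decision to retain $q$ as a free factor rather than replacing it by its upper bound $1/12$: since $t(Q_2, U) = 0$ does not force $U \equiv 0$, any purely numerical lower bound on $S$ would leave the case $q = 0$ unresolved, and in that case the forcing conclusion rests on the strict positivity of $t(D_\ell, U)$ supplied by Lemma \ref{lm:cycle}.
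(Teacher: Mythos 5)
Your proof is correct and follows essentially the same route as the paper: expand via \eqref{eq:even}, keep the $Q_2$ term with its exact coefficient, lower-bound the remaining products by $t(Q_2,U)/(12^{k-1}\pi^{2n_1+\cdots+2n_k-2k})$ via Lemmas~\ref{lm:positive} and~\ref{lm:Q2} (discarding the non-negative contributions), factor out $t(Q_2,U)$, invoke the hypothesis to conclude the sum is non-negative, and finally deduce forcing from the strict positivity of $t(D_\ell,U)$ supplied by Lemma~\ref{lm:cycle}. Your closing remark on why $t(Q_2,U)$ must be retained as a factor rather than numerically bounded is also exactly the structural point the paper's chain of inequalities relies on.
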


\begin{proof}
Fix an orientation $C$ of a cycle of even length $\ell$ that satisfies the assumption of the theorem.
We present the proof when $\gamma_C=1$ as an argument in the other case is completely symmetric.
Consider a tournamenton $W$ and let $U=W-1/2$.
We use \eqref{eq:even}, Lemmas~\ref{lm:positive} and~\ref{lm:Q2} and
the assumption of the theorem to derive the following series of inequalities:

\begin{align*}
&t\left(C,\frac{1}{2}+U\right)=\frac{1}{2^{\ell}}+
                              \sum_{\substack{1\le n_1\le\cdots\le n_k\\ k+2n_1+\cdots+2n_k\le\ell}}\frac{\alpha_C(2n_1,\ldots,2n_k)}{2^{\ell-2n_1-\cdots-2n_k}}\prod_{i=1}^k t(Q_{2n_i},U)+t(D_{\ell},U)\\
			     &\ge\frac{1}{2^{\ell}}+\frac{\alpha_C(2)t(Q_2,U)}{2^{\ell-2}}+t(D_{\ell},U)
                             +\!\!\!\sum_{\substack{1\le n_1\le\cdots\le n_k\\ 3<k+2n_1+\cdots+2n_k\le\ell}}\!\!\!\frac{\min\{0,\alpha_C(2n_1,\ldots,2n_k)\}}{2^{\ell-2n_1-\cdots-2n_k}}\prod_{i=1}^k t(Q_{2n_i},U)\\
			     &\ge\frac{1}{2^{\ell}}+\frac{\alpha_C(2)t(Q_2,U)}{2^{\ell-2}}+t(D_{\ell},U)
                             +\sum_{\substack{1\le n_1\le\cdots\le n_k\\ 3<k+2n_1+\cdots+2n_k\le\ell}}\frac{\min\{0,\alpha_C(2n_1,\ldots,2n_k)\}t(Q_2,U)}{12^{k-1}\cdot\pi^{2n_1+\cdots+2n_k-2k}\cdot 2^{\ell-2n_1-\cdots-2n_k}}\\
			     &\ge\frac{1}{2^{\ell}}+t(D_{\ell},U)
			     +\left(\frac{\alpha_C(2)}{2^{\ell-2}}
                             +\hspace{-0.7em}\sum_{\substack{1\le n_1\le\cdots\le n_k\\ 3<k+2n_1+\cdots+2n_k\le\ell}}\hspace{-0.6em}\frac{\min\{0,\alpha_C(2n_1,\ldots,2n_k)\}}{12^{k-1}\cdot\pi^{2n_1+\cdots+2n_k-2k}\cdot 2^{\ell-2n_1-\cdots-2n_k}}\right)t(Q_2,U)\\
			     &\ge\frac{1}{2^{\ell}}+t(D_{\ell},U).
\end{align*}
Hence, $t(C,W)=1/2^{\ell}$ can hold only if $t(D_\ell,U)=0$.
Since $t(D_\ell,U)=0$ holds if and only if $W\equiv 1/2$ (or equivalently $U\equiv 0$) by Lemma~\ref{lm:cycle},
we obtain that the cycle $C$ is quasirandom-forcing.
\end{proof}

We next prove our third sufficient condition on an orientation of a cycle to be quasi\-random-forcing.

\begin{theorem}
\label{thm:QR22general}
Let $C$ be an orientation of a cycle of even length $\ell$.
The oriented cycle $C$ is quasirandom-forcing if $\gamma_C=1$,
\begin{align*}
\frac{\alpha_C(2)}{2^{\ell-2}}+\sum_{\substack{n_1=2,\ldots,\ell/2-1}}\frac{\min\{0,\alpha_C(2n_1)\}}{\pi^{2n_1-2}\cdot 2^{\ell-2n_1}} & \ge 0 &\mbox{and} \\
\frac{\alpha_C(2,2)}{2^{\ell-4}}+\sum_{\substack{1\le n_1\le\cdots\le n_k,\;2\le k\\ 6<k+2n_1+\cdots+2n_k\le\ell}}\frac{\min\{0,\alpha_C(2n_1,\ldots,2n_k)\}}{12^{k-2}\cdot\pi^{2n_1+\cdots+2n_k-2k}\cdot 2^{\ell-2n_1-\cdots-2n_k}} & \ge 0.
\end{align*}

Similarly, the oriented cycle $C$ is quasirandom-forcing if $\gamma_C=-1$,
\begin{align*}
\frac{\alpha_C(2)}{2^{\ell-2}}+\sum_{\substack{n_1=2,\ldots,\ell/2-1}}\frac{\max\{0,\alpha_C(2n_1)\}}{\pi^{2n_1-2}\cdot 2^{\ell-2n_1}} & \le 0 &\mbox{and} \\
\frac{\alpha_C(2,2)}{2^{\ell-4}}+\sum_{\substack{1\le n_1\le\cdots\le n_k,\;2\le k\\ 6<k+2n_1+\cdots+2n_k\le\ell}}\frac{\max\{0,\alpha_C(2n_1,\ldots,2n_k)\}}{12^{k-2}\cdot\pi^{2n_1+\cdots+2n_k-2k}\cdot 2^{\ell-2n_1-\cdots-2n_k}} & \le 0.
\end{align*}
\end{theorem}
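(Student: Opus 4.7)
The plan is to follow the template of the proof of Theorem~\ref{thm:QRgeneral}, but with a finer anchoring: instead of isolating only the linear term $\alpha_C(2)\,t(Q_2,U)$, I would also isolate the quadratic term $\alpha_C(2,2)\,t(Q_2,U)^2$, and bound every remaining contribution in terms of $t(Q_2,U)$ or $t(Q_2,U)^2$ as appropriate. As in the previous proofs, I set $U=W-1/2$ and use the expansion \eqref{eq:even}; by Propositions~\ref{prop:reorient} and~\ref{prop:odd}, the only surviving terms correspond to subsets $F\subseteq E(C)$ all of whose components have an even number of edges, which is exactly the regime counted by the quantities $\alpha_C(2n_1,\ldots,2n_k)$.

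I would handle the case $\gamma_C=1$ first (the $\gamma_C=-1$ case is formally symmetric, with all inequalities flipped). Starting from \eqref{eq:even}, I split the sum over $(n_1,\ldots,n_k)$ into four types: the two anchor terms $(k,n_1)=(1,1)$ and $(k,n_1,n_2)=(2,1,1)$; the remaining terms with $k=1$, $n_1\ge 2$; and the remaining terms with $k\ge 2$. For the last two types, I discard the summands whose coefficient $\alpha_C(2n_1,\ldots,2n_k)$ is non-negative, since each product $\prod_i t(Q_{2n_i},U)$ is non-negative by Lemma~\ref{lm:positive}. For the summands with negative coefficient I use Lemma~\ref{lm:positive} (with $k=1$ in its statement) to estimate $t(Q_{2n_1},U)\le t(Q_2,U)/\pi^{2n_1-2}$ in the type~(iii) terms, while for type~(iv) terms with $k\ge 2$ I bound two of the factors via the same inequality and each of the remaining $k-2$ factors via Lemma~\ref{lm:Q2}, giving
\[\prod_{i=1}^k t(Q_{2n_i},U)\le \frac{t(Q_2,U)^2}{12^{k-2}\,\pi^{2n_1+\cdots+2n_k-2k}}.\]
Substituting these estimates into \eqref{eq:even} yields
\[t\!\left(C,\tfrac{1}{2}+U\right)\;\ge\; \frac{1}{2^{\ell}}+A\cdot t(Q_2,U)+B\cdot t(Q_2,U)^2+t(D_{\ell},U),\]
where $A$ and $B$ are precisely the two expressions appearing in the statement of the theorem. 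The hypotheses assert $A,B\ge 0$; combined with $t(Q_2,U)\ge 0$ from Lemma~\ref{lm:positive}, we obtain $t(C,W)\ge 1/2^{\ell}+t(D_{\ell},U)$. Hence $t(C,W)=1/2^{\ell}$ forces $t(D_{\ell},U)=0$, and Lemma~\ref{lm:cycle} then gives $U\equiv 0$, i.e.\ $W\equiv 1/2$.

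The main obstacle is not conceptual but combinatorial bookkeeping: one must check that the two spectral inequalities (Lemmas~\ref{lm:positive} and~\ref{lm:Q2}) are combined with the right multiplicities so that the factors $12^{k-2}$, $\pi^{2n_1+\cdots+2n_k-2k}$, and $2^{\ell-2n_1-\cdots-2n_k}$ match exactly the coefficients written in the statement, and that the range conditions ``$n_1=2,\ldots,\ell/2-1$'' in the first sum and ``$6<k+2n_1+\cdots+2n_k\le\ell$'' in the second sum correctly exclude the two anchor terms while still covering all partitions that arise in~\eqref{eq:even} (the upper bound $k+2n_1+\cdots+2n_k\le\ell$ being inherited from the fact that each admissible $F$ spans a subgraph of the $\ell$-vertex cycle $C$).
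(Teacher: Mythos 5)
Your proposal is correct and follows essentially the same route as the paper: it anchors on the $(k,n_1)=(1,1)$ and $(k,n_1,n_2)=(2,1,1)$ terms of \eqref{eq:even}, bounds every remaining $k=1$ term by $t(Q_2,U)/\pi^{2n_1-2}$ via Lemma~\ref{lm:positive} and every remaining $k\ge 2$ product by $t(Q_2,U)^2/\bigl(12^{k-2}\pi^{2n_1+\cdots+2n_k-2k}\bigr)$ via Lemmas~\ref{lm:positive} and~\ref{lm:Q2}, and then uses the two hypotheses together with $t(Q_2,U)\ge 0$ to conclude $t(C,1/2+U)\ge 2^{-\ell}+t(D_\ell,U)$, finishing with Lemma~\ref{lm:cycle}. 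The only cosmetic difference is that you phrase the split as four types while the paper groups $k=1$ and $k\ge 2$ into two sums, each containing its own anchor; the estimates and conclusion are identical.
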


\begin{proof}
Fix an orientation $C$ of a cycle of even length $\ell$ that satisfies the assumption of the theorem.
As in the proof of Theorem~\ref{thm:QRgeneral},
we present an argument for the case $\gamma_C=1$ as the other case is completely symmetric.
Consider a tournamenton $W$ and let $U=W-1/2$.
As in the proof of Theorem~\ref{thm:QRgeneral},
we again use \eqref{eq:even} and Lemmas~\ref{lm:positive} and~\ref{lm:Q2}
but we bound the terms with $k=1$ and those with $k\ge 2$ separately.
The terms with $k=1$ can be bound using Lemma~\ref{lm:positive} as follows:
\begin{align*}
\sum_{\substack{n_1=1,\ldots,\ell/2-1}}\frac{\alpha_C(2n_1)t(Q_{2n_1},U)}{2^{\ell-2n_1}}
& \ge \frac{\alpha_C(2)t(Q_2,U)}{2^{\ell-2}}+\sum_{\substack{n_1=2,\ldots,\ell/2-1}}\frac{\min\{0,\alpha_C(2n_1)\}t(Q_{2n_1},U)}{2^{\ell-2n_1}} \\
& \ge \frac{\alpha_C(2)t(Q_2,U)}{2^{\ell-2}}+\sum_{\substack{n_1=2,\ldots,\ell/2-1}}\frac{\min\{0,\alpha_C(2n_1)\}t(Q_2,U)}{\pi^{2n_1-2}\cdot 2^{\ell-2n_1}} \\
& = \left(\frac{\alpha_C(2)}{2^{\ell-2}}+\sum_{\substack{n_1=2,\ldots,\ell/2-1}}\frac{\min\{0,\alpha_C(2n_1)\}}{\pi^{2n_1-2}\cdot 2^{\ell-2n_1}}\right)t(Q_2,U) \ge 0.
\end{align*}
The remaining terms are bounded using Lemmas~\ref{lm:positive} and~\ref{lm:Q2} as follows:
\begin{align*}
& \sum_{\substack{1\le n_1\le\cdots\le n_k,\;2\le k\\ 5< k+2n_1+\cdots+2n_k\le\ell}} \frac{\alpha_C(2n_1,\ldots,2n_k)}{2^{\ell-2n_1-\cdots-2n_k}}\prod_{i=1}^k t(Q_{2n_i},U)\\
& \ge \frac{\alpha_C(2,2)t(Q_2,U)^2}{2^{\ell-4}}+\sum_{\substack{1\le n_1\le\cdots\le n_k,\;2\le k\\ 6<k+2n_1+\cdots+2n_k\le\ell}}\frac{\min\{0,\alpha_C(2n_1,\ldots,2n_k)\}}{2^{\ell-2n_1-\cdots-2n_k}}\prod_{i=1}^k t(Q_{2n_i},U)\\
& \ge \frac{\alpha_C(2,2)t(Q_2,U)^2}{2^{\ell-4}}+\sum_{\substack{1\le n_1\le\cdots\le n_k,\;2\le k\\ 6<k+2n_1+\cdots+2n_k\le\ell}}\frac{\min\{0,\alpha_C(2n_1,\ldots,2n_k)\}}{12^{k-2}\cdot\pi^{2n_1+\cdots+2n_k-2k}\cdot 2^{\ell-2n_1-\cdots-2n_k}}t(Q_2,U)^2\ge 0.
\end{align*}
The above two estimates yield that the value of the sum in \eqref{eq:even} is always non-negative and
so we obtain that
\[t\left(C,\frac{1}{2}+U\right)\ge\frac{1}{2^{\ell}}+t(D_{\ell},U).\]
Hence, $t(C,W)=1/2^{\ell}$ can hold only if $t(D_\ell,U)=0$.
Since $t(D_\ell,U)=0$ holds if and only if $W\equiv 1/2$ (or equivalently $U\equiv 0$) by Lemma~\ref{lm:cycle},
we obtain that the cycle $C$ is quasirandom-forcing.
\end{proof}

We next prove our fourth sufficient condition on an orientation of a cycle to be quasi\-random-forcing.

\begin{theorem}
\label{thm:QR4general}
Let $C$ be an orientation of a cycle of even length $\ell$.
The oriented cycle $C$ is quasirandom-forcing if $\gamma_C=1$,
\begin{align*}
\frac{\alpha_C(2)}{2^{\ell-2}}+\sum_{\substack{n_1=\cdots=n_k=1\\ 3<k+2n_1+\cdots+2n_k\le\ell}}\frac{\min\{0,\alpha_C(2n_1,\ldots,2n_k)\}}{12^{k-1}\cdot 2^{\ell-2n_1-\cdots-2n_k}} & \ge 0 &\mbox{and} \\
\frac{\alpha_C(4)}{2^{\ell-4}}+\sum_{\substack{1\le n_1\le\cdots\le n_k,\;2\le n_k\\ 5<k+2n_1+\cdots+2n_k\le\ell}}\frac{\min\{0,\alpha_C(2n_1,\ldots,2n_k)\}}{12^{k-1}\cdot\pi^{2n_1+\cdots+2n_k-2k-2}\cdot 2^{\ell-2n_1-\cdots-2n_k}} & \ge 0.
\end{align*}
Similarly, the oriented cycle $C$ is quasirandom-forcing if $\gamma_C=-1$,
\begin{align*}
\frac{\alpha_C(2)}{2^{\ell-2}}+\sum_{\substack{n_1=\cdots=n_k=1\\ 3<k+2n_1+\cdots+2n_k\le\ell}}\frac{\max\{0,\alpha_C(2n_1,\ldots,2n_k)\}}{12^{k-1}\cdot 2^{\ell-2n_1-\cdots-2n_k}} & \le 0 &\mbox{and} \\
\frac{\alpha_C(4)}{2^{\ell-4}}+\sum_{\substack{1\le n_1\le\cdots\le n_k,\;2\le n_k\\ 5<k+2n_1+\cdots+2n_k\le\ell}}\frac{\max\{0,\alpha_C(2n_1,\ldots,2n_k)\}}{12^{k-1}\cdot\pi^{2n_1+\cdots+2n_k-2k-2}\cdot 2^{\ell-2n_1-\cdots-2n_k}} & \le 0.
\end{align*}
\end{theorem}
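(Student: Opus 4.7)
Fix an orientation $C$ of a cycle of even length $\ell$ satisfying the hypotheses; as in the proofs of Theorems~\ref{thm:QRgeneral} and~\ref{thm:QR22general}, I argue the case $\gamma_C=1$ in detail, the case $\gamma_C=-1$ being completely symmetric. Let $W$ be a tournamenton and set $U=W-1/2$. The goal, using identity \eqref{eq:even}, is to show that the sum of the terms $\alpha_C(2n_1,\ldots,2n_k)\prod_i t(Q_{2n_i},U)/2^{\ell-2n_1-\cdots-2n_k}$ is non-negative, since then $t(C,W)\ge 1/2^{\ell}+t(D_{\ell},U)$ and Lemma~\ref{lm:cycle} finishes the argument exactly as before. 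The plan is to split this sum into Part A (terms with all $n_i=1$) and Part B (terms with $n_k\ge 2$), and show that each part is non-negative by factoring out an appropriate non-negative $Q$-density.

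For Part A the product equals $t(Q_2,U)^k$. I factor out a single copy of $t(Q_2,U)$, which is non-negative by Lemma~\ref{lm:positive}, and bound each of the remaining $k-1$ copies by $1/12$ using Lemma~\ref{lm:Q2}. Replacing each $\alpha_C(2,\ldots,2)$ by $\min\{0,\alpha_C(2,\ldots,2)\}$ preserves a lower bound since the product is non-negative, and applying the $1/12$ upper bound to $t(Q_2,U)^{k-1}$ only makes the resulting quantity more negative (the $\min$ being non-positive); the distinguished $\alpha_C(2)$ term carries no $\min$ and is absorbed unchanged. This yields a lower bound of $t(Q_2,U)$ times the first bracketed expression in the hypothesis, which is non-negative.

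For Part B I factor out $t(Q_4,U)\ge 0$ instead. For each index $i<k$, Lemma~\ref{lm:Q2} gives $t(Q_{2n_i},U)\le 1/(12\pi^{2(n_i-1)})$, and for the index $k$, Lemma~\ref{lm:positive} applied with its parameter equal to $2$ gives $t(Q_{2n_k},U)\le t(Q_4,U)/\pi^{2(n_k-2)}$ (trivially an equality when $n_k=2$). The combined exponent of $\pi$ is $\sum_{i<k}2(n_i-1)+2(n_k-2)=2n_1+\cdots+2n_k-2k-2$ and the combined power of $12$ is $12^{k-1}$, matching the denominator in the second hypothesis, and the $\alpha_C(4)$ term is again absorbed unchanged. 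Summing the two parts yields the desired lower bound, and Lemma~\ref{lm:cycle} then concludes that $W\equiv 1/2$.

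The principal novelty over Theorems~\ref{thm:QRgeneral} and~\ref{thm:QR22general} is the decision to factor out $t(Q_4,U)$ rather than a further power of $t(Q_2,U)$ in Part B, which is precisely what lets the hypothesis reference $\alpha_C(4)$ as the distinguished coefficient; the main bookkeeping obstacle is verifying that the exponents of $\pi$ and $12$ agree with those stated in the theorem and that every sign-manipulation (replacement by $\min\{0,\cdot\}$, multiplication of a non-positive quantity by an upper bound) goes in the direction preserving a lower bound.
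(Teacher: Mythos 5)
Your proposal is correct and matches the paper's proof essentially line for line: both split the sum from \eqref{eq:even} into the all-$n_i=1$ part (factoring out a single $t(Q_2,U)$ and bounding the remaining powers by $1/12$ via Lemma~\ref{lm:Q2}) and the $n_k\ge 2$ part (factoring out $t(Q_4,U)$ and bounding the first $k-1$ factors via Lemma~\ref{lm:Q2} and the last via Lemma~\ref{lm:positive} with base $Q_4$), then conclude with Lemma~\ref{lm:cycle}. Your bookkeeping of the $\pi$- and $12$-exponents and the sign-preservation arguments are all correct.
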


\begin{proof}
Fix an orientation $C$ of a cycle of even length $\ell$ that satisfies the assumption of the theorem.
As in the proof of Theorems~\ref{thm:QRgeneral} and~\ref{thm:QR22general},
we present an argument for the case $\gamma_C=1$ as the other case is completely symmetric.
Consider a tournamenton $W$ and let $U=W-1/2$.
We again use \eqref{eq:even} and Lemmas~\ref{lm:positive} and~\ref{lm:Q2}
but we bound the terms where all $n_1,\ldots,n_k$ are equal to one and the remaining terms separately.
We bound the terms with $n_1=\cdots=n_k=1$ using $t(Q_2,U)\le 1/12$ as follows:
\begin{align*}
& \sum_{\substack{n_1=\cdots=n_k=1\\ k+2n_1+\cdots+2n_k\le\ell}}\frac{\alpha_C(2n_1,\ldots,2n_k)t(Q_2,U)^k}{2^{\ell-2k}} \\
& \ge \frac{\alpha_C(2)t(Q_2,U)}{2^{\ell-2}}+\sum_{\substack{n_1=\cdots=n_k=1\\ 3<k+2n_1+\cdots+2n_k\le\ell}}\frac{\min\{0,\alpha_C(2n_1,\ldots,2n_k)\}t(Q_2,U)^k}{2^{\ell-2k}} \\
& \ge \frac{\alpha_C(2)t(Q_2,U)}{2^{\ell-2}}+\sum_{\substack{n_1=\cdots=n_k=1\\ 3<k+2n_1+\cdots+2n_k\le\ell}}\frac{\min\{0,\alpha_C(2n_1,\ldots,2n_k)\}t(Q_2,U)}{12^{k-1}\cdot 2^{\ell-2k}} \ge 0.
\end{align*}
The remaining terms are bounded using Lemmas~\ref{lm:positive} and~\ref{lm:Q2} as follows:
\begin{align*}
& \sum_{\substack{1\le n_1\le\cdots\le n_k,\;2\le n_k\\ k+2n_1+\cdots+2n_k\le\ell}} \frac{\alpha_C(2n_1,\ldots,2n_k)}{2^{\ell-2n_1-\cdots-2n_k}}\prod_{i=1}^k t(Q_{2n_i},U)\\
& \ge \frac{\alpha_C(4)t(Q_4,U)}{2^{\ell-4}}+\sum_{\substack{1\le n_1\le\cdots\le n_k,\;2\le n_k\\ 5<k+2n_1+\cdots+2n_k\le\ell}}\frac{\min\{0,\alpha_C(2n_1,\ldots,2n_k)\}}{2^{\ell-2n_1-\cdots-2n_k}}\prod_{i=1}^k t(Q_{2n_i},U)\\
& \ge \frac{\alpha_C(4)t(Q_4,U)}{2^{\ell-4}}+\sum_{\substack{1\le n_1\le\cdots\le n_k,\;2\le n_k\\ 5<k+2n_1+\cdots+2n_k\le\ell}}\frac{\min\{0,\alpha_C(2n_1,\ldots,2n_k)\}}{12^{k-1}\cdot\pi^{2n_1+\cdots+2n_k-2k-2}\cdot 2^{\ell-2n_1-\cdots-2n_k}}t(Q_4,U)\ge 0.
\end{align*}
The above two estimates yield that the value of the sum in \eqref{eq:even} is always non-negative and
so we obtain that
\[t\left(C,\frac{1}{2}+U\right)\ge\frac{1}{2^{\ell}}+t(D_{\ell},U).\]
Hence, $t(C,W)=1/2^{\ell}$ can hold only if $t(D_\ell,U)=0$.
Since $t(D_\ell,U)=0$ holds if and only if $W\equiv 1/2$ (or equivalently $U\equiv 0$) by Lemma~\ref{lm:cycle},
we obtain that the cycle $C$ is quasirandom-forcing.
\end{proof}

We now present our last sufficient condition on an orientation of a cycle to be quasi\-random-forcing.

\begin{theorem}
\label{thm:QRgeneral4}
Let $C$ be an orientation of a cycle of even length $\ell$.
If $\gamma_C=1$, $\alpha_C(2)=0$ and 
\[\frac{\alpha_C(4)}{2^{\ell-4}}+
  \sum_{3\le n_1\le\ell/2-1}\frac{\min\{0,\alpha_C(2n_1)\}}{\pi^{2n_1-4}\cdot 2^{\ell-2n_1}}+
  \hspace{-0.5em}
  \sum_{\substack{1\le n_1\le\cdots\le n_k\\ k+2n_1+\cdots+2n_k\le\ell\\ k\ge 2}}
  \hspace{-0.5em}
  \frac{\min\{0,\alpha_C(2n_1,\ldots,2n_k)\}}{12^{k-2}\cdot\pi^{2n_1+\cdots+2n_k-2k}\cdot 2^{\ell-2n_1-\cdots-2n_k}}\ge 0,\]
then the cycle $C$ is quasirandom-forcing.

Similarly, if $\gamma_C=-1$, $\alpha_C(2)=0$ and
\[\frac{\alpha_C(4)}{2^{\ell-4}}+
  \sum_{3\le n_1\le\ell/2-1}\frac{\max\{0,\alpha_C(2n_1)\}}{\pi^{2n_1-4}\cdot 2^{\ell-2n_1}}+
  \hspace{-0.5em}
  \sum_{\substack{1\le n_1\le\cdots\le n_k\\ k+2n_1+\cdots+2n_k\le\ell\\ k\ge 2}}
  \hspace{-0.5em}
  \frac{\max\{0,\alpha_C(2n_1,\ldots,2n_k)\}}{12^{k-2}\cdot\pi^{2n_1+\cdots+2n_k-2k}\cdot 2^{\ell-2n_1-\cdots-2n_k}}\le 0,\]
then the cycle $C$ is quasirandom-forcing.
\end{theorem}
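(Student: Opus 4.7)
The plan is to follow the template established in Theorems~\ref{thm:QRgeneral}--\ref{thm:QR4general}: fix a tournamenton $W$, set $U = W - 1/2$, expand $t(C, 1/2 + U)$ via \eqref{eq:even}, and bound every term of the sum from below by a nonnegative-coefficient multiple of $t(Q_4, U)$, so that $t(C, W) \ge 2^{-\ell} + \gamma_C \, t(D_\ell, U)$; Lemma~\ref{lm:cycle} then finishes the proof. The case $\gamma_C = -1$ is completely symmetric, so I focus on $\gamma_C = 1$. The hypothesis $\alpha_C(2) = 0$ kills the $k = 1$, $n_1 = 1$ term of \eqref{eq:even}, so every surviving term in the sum either has $k = 1$ with $n_1 \ge 2$ or has $k \ge 2$.

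The new tool, not needed in the earlier theorems, is the Cauchy--Schwarz bound
\[ t(Q_2, U)^2 \le t(Q_4, U), \]
which follows from the identity $t(Q_{2j}, U) = \langle h_1, (-U^2)^j h_1\rangle$ together with Cauchy--Schwarz applied to the self-adjoint operator $-U^2$ (whose self-adjointness is verified in the proof of Lemma~\ref{lm:polynomial}):
\[ t(Q_2, U)^2 = \langle h_1, -U^2 h_1\rangle^2 \le \langle h_1, h_1\rangle \langle -U^2 h_1, -U^2 h_1\rangle = t(Q_4, U). \]
With this inequality in hand, the $k \ge 2$ terms are treated by bounding the $k - 2$ largest factors $t(Q_{2n_i}, U)$ via Lemma~\ref{lm:Q2} and the two remaining factors via Lemma~\ref{lm:positive} (applied with parameter $k = 1$); the product of the latter two is at most $t(Q_2, U)^2$ divided by the corresponding $\pi$-powers, which after invoking $t(Q_2, U)^2 \le t(Q_4, U)$ gives
\[ \prod_{i=1}^k t(Q_{2n_i}, U) \le \frac{t(Q_4, U)}{12^{k-2}\,\pi^{2n_1 + \cdots + 2n_k - 2k}}, \]
matching precisely the denominator appearing in the hypothesis.

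For the $k = 1$ terms, the $n_1 = 2$ contribution is already the linear term $\alpha_C(4) \, t(Q_4, U) / 2^{\ell - 4}$, while for $n_1 \ge 3$ Lemma~\ref{lm:positive} gives $t(Q_{2n_1}, U) \le t(Q_4, U)/\pi^{2n_1 - 4}$; coupled with $t(Q_{2n_1}, U) \ge 0$, this yields the required lower bound $\alpha_C(2n_1)\,t(Q_{2n_1}, U)/2^{\ell-2n_1} \ge \min\{0, \alpha_C(2n_1)\}\,t(Q_4, U)/(\pi^{2n_1 - 4}\cdot 2^{\ell-2n_1})$. Summing these estimates over all surviving terms and invoking the hypothesis of the theorem shows that the full sum in \eqref{eq:even} is at least a nonnegative multiple of $t(Q_4, U)$, hence nonnegative. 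Thus $t(C, W) \ge 2^{-\ell} + t(D_\ell, U)$, so $t(C, W) = 2^{-\ell}$ forces $t(D_\ell, U) = 0$, and Lemma~\ref{lm:cycle} yields $U \equiv 0$.

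The main obstacle is establishing the auxiliary Cauchy--Schwarz bound $t(Q_2, U)^2 \le t(Q_4, U)$ (which is new relative to the earlier theorems) together with careful $\pi$-exponent bookkeeping when one or both of the ``unbounded'' factors correspond to $n_i = 1$, in which case the Lemma~\ref{lm:positive} estimate degenerates to the trivial $t(Q_2, U) \le t(Q_2, U)$; once these points are handled uniformly, the remainder of the argument is parallel to Theorems~\ref{thm:QRgeneral}--\ref{thm:QR4general}.
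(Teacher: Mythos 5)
Your proposal is correct and follows essentially the same strategy as the paper: establish $t(Q_2,U)^2\le t(Q_4,U)$ via Cauchy--Schwarz, then use it together with Lemmas~\ref{lm:positive} and~\ref{lm:Q2} to bound every surviving term of \eqref{eq:even} from below by a multiple of $t(Q_4,U)$ matching the coefficients in the hypothesis, giving $t(C,W)\ge 2^{-\ell}+\gamma_C\,t(D_\ell,U)$ and concluding via Lemma~\ref{lm:cycle}. The only cosmetic difference is that you verify $t(Q_2,U)^2\le t(Q_4,U)$ in operator form, $\langle h_1,-U^2h_1\rangle^2\le\langle h_1,h_1\rangle\langle -U^2h_1,-U^2h_1\rangle$, whereas the paper applies Cauchy--Schwarz directly at the integral level; also, the ``$\pi$-exponent bookkeeping'' issue you flag is a non-issue, since for $n_i=1$ both Lemma~\ref{lm:positive} with $k=1$ and Lemma~\ref{lm:Q2} reduce to exponent $\pi^0$, so the product estimate $\prod_i t(Q_{2n_i},U)\le t(Q_4,U)/(12^{k-2}\pi^{2n_1+\cdots+2n_k-2k})$ holds uniformly without case distinction.
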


\begin{proof}
Fix an orientation $C$ of a cycle of even length $\ell$ that satisfies the assumption of the theorem.
We present the proof when $\gamma_C=1$ as an argument in the other case is completely symmetric.
Consider a tournamenton $W$ and let $U=W-1/2$.
We start by showing that
\begin{equation}
t(Q_2,U)^2\le t(Q_4,U).\label{eq:Q22Q4}
\end{equation}
Observe the following identities:
\begin{align*}
t(Q_2,U) & =  \int_{[0,1]^3} U(z,y)U(z,x) \dd z\dd x\dd y = \int_{[0,1]^2} -U^2(x,y) \dd x\dd y\ \mbox{ and}\\
t(Q_4,U) & = \int_{[0,1]^3} U^2(x,y)U^2(x,y') \dd x\dd y\dd y'.
\end{align*}
We obtain using the Cauchy–Schwarz inequality that
\[ \int_{[0,1]^2} -U^2(x,y) \dd y\dd x\le
   \left(\int_{[0,1]} \left(\int_{[0,1]}-U^2(x,y)\dd y\right)^2\dd x\right)^{1/2}.\]
Since it holds for every $x\in [0,1]$ that   
\[\left(\int_{[0,1]} -U^2(x,y)\dd y\right)^2=
  \int_{[0,1]^2} U^2(x,y)U^2(x,y')\dd y\dd y',\]
we derive that
\[ t(Q_2,U)=\int_{[0,1]^2} -U^2(x,y) \dd x\dd y\le
   \left(\int_{[0,1]^3} U^2(x,y)U^2(x,y') \dd x\dd y\dd y'\right)^{1/2}=
   t(Q_4,U)^{1/2},\]
as wanted.

Lemma~\ref{lm:positive} implies that the following holds for every positive integer $m\ge 2$:
\begin{equation}
t(Q_{2m},U)\le\frac{t(Q_4,U)}{\pi^{2m-4}}.\label{eq:QR1}
\end{equation}
Lemmas~\ref{lm:positive} and~\ref{lm:Q2} together with \eqref{eq:Q22Q4} imply
that the following holds for all positive integers $n_1,\ldots,n_k$, $k\ge 2$:
\begin{equation}
\prod_{i=1}^k t(Q_{2n_i},U)
\le\frac{t(Q_2,U)^2}{12^{k-2}\pi^{2n_1+\cdots+2n_k-2k}}
\le\frac{t(Q_4,U)}{12^{k-2}\pi^{2n_1+\cdots+2n_k-2k}}.\label{eq:QR2}
\end{equation}
We next use \eqref{eq:even}, \eqref{eq:QR1} and \eqref{eq:QR2} and
the assumption of the theorem
to derive the following series of inequalities:
\begin{align*}
&t\left(C,\frac{1}{2}+U\right)=\frac{1}{2^{\ell}}+
                              \sum_{\substack{1\le n_1\le\cdots\le n_k\\ k+2n_1+\cdots+2n_k\le\ell}}\frac{\alpha_C(2n_1,\ldots,2n_k)}{2^{\ell-2n_1-\cdots-2n_k}}\prod_{i=1}^k t(Q_{2n_i},U)+t(D_{\ell},U)\\
			     &\ge\frac{1}{2^{\ell}}+\frac{\alpha_C(4)t(Q_4,U)}{2^{\ell-4}}+t(D_{\ell},U)
                             +\!\!\!\sum_{\substack{1\le n_1\le\cdots\le n_k\\ 5<k+2n_1+\cdots+2n_k\le\ell}}\!\!\frac{\min\{0,\alpha_C(2n_1,\ldots,2n_k)\}}{2^{\ell-2n_1-\cdots-2n_k}}\prod_{i=1}^k t(Q_{2n_i},U)\\
			     &\ge\frac{1}{2^{\ell}}+\frac{\alpha_C(4)t(Q_4,U)}{2^{\ell-4}}+t(D_{\ell},U)
                             +\sum_{3\le n_1\le\ell/2-1}\frac{\min\{0,\alpha_C(2n_1)\}t(Q_4,U)}{\pi^{2n_1-4}\cdot 2^{\ell-2n_1}}\\
			     &\qquad+\sum_{\substack{1\le n_1\le\cdots\le n_k\\ k+2n_1+\cdots+2n_k\le\ell\\ k\ge 2}}\frac{\min\{0,\alpha_C(2n_1,\ldots,2n_k)\}t(Q_4,U)}{12^{k-2}\cdot\pi^{2n_1+\cdots+2n_k-2k}\cdot 2^{\ell-2n_1-\cdots-2n_k}}\\
			     &\ge\frac{1}{2^{\ell}}+t(D_{\ell},U).
\end{align*}
Hence, $t(C,W)=1/2^{\ell}$ can hold only if $t(D_\ell,U)=0$.
Since $t(D_\ell,U)=0$ holds if and only if $W\equiv 1/2$ (or equivalently $U\equiv 0$) by Lemma~\ref{lm:cycle},
we obtain that the cycle $C$ is quasirandom-forcing.
\end{proof}

\section{Negative results}
\label{sec:negative}

In this section,
we will present conditions that imply that an orientation of an even cycle is not quasirandom-forcing.
We start with showing that no orientation of an odd cycle is quasirandom-forcing.

\begin{theorem}
\label{thm:odd}
No orientation of an odd cycle is quasirandom-forcing.
\end{theorem}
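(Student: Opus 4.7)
The plan is to use the identity~\eqref{eq:odd} (valid because $\ell$ is odd) together with a single explicit construction that witnesses non-forcing for every orientation of every odd cycle simultaneously. Writing $W=1/2+U$, formula~\eqref{eq:odd} expresses $t(C,W)-2^{-\ell}$ as a sum whose every term contains at least one factor of the form $t(Q_{2n_i},U)$ with $n_i\ge 1$. Thus the strategy reduces to exhibiting an antisymmetric kernel $U\not\equiv 0$ with $\|U\|_\infty\le 1/2$ that satisfies $t(Q_{2m},U)=0$ for every $m\ge 1$; the resulting $W=1/2+U$ is then a non-quasirandom tournamenton with $t(C,W)=2^{-\ell}$, showing that $C$ is not quasirandom-forcing.

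The crucial observation, essentially already contained in the proof of Lemma~\ref{lm:positive}, is the identity
\[t(Q_{2m},U)=\langle U^mh_1,U^mh_1\rangle,\]
where $h_1\colon[0,1]\to\RR$ denotes the constant function $1$. Consequently, if $Uh_1\equiv 0$ (equivalently, if $\int_0^1 U(x,y)\dd y=0$ for almost every $x\in[0,1]$), then $U^mh_1\equiv 0$ for every $m\ge 1$, and hence $t(Q_{2m},U)=0$ for every $m\ge 1$. Substituting this into~\eqref{eq:odd} immediately yields $t(C,W)=2^{-\ell}$, no matter which orientation $C$ of the odd cycle we consider and regardless of the sign pattern of the coefficients $\alpha_C(2n_1,\ldots,2n_k)$.

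It remains to exhibit one such kernel, for which the rock-paper-scissors construction suffices. Partition $[0,1]$ into three intervals $A_1,A_2,A_3$ of equal length and set $U$ equal to $+1/2$ on $A_i\times A_{i+1}$ (indices modulo three), equal to $-1/2$ on $A_{i+1}\times A_i$, and equal to $0$ on the diagonal blocks $A_i\times A_i$. Then $U$ is antisymmetric, satisfies $\|U\|_\infty\le 1/2$, fulfils $\int_0^1 U(x,y)\dd y=0$ for every $x\in[0,1]$, and is manifestly not identically zero, so that $W=1/2+U\not\equiv 1/2$. No serious obstacle arises in this argument: a single explicit tournamenton works uniformly in $C$, and the only real insight needed is that the row-sum-zero condition annihilates every $t(Q_{2m},U)$-factor and thus every nontrivial summand in~\eqref{eq:odd} in one stroke.
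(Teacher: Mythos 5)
Your proof is correct and takes essentially the same approach as the paper: you use the identical rock--paper--scissors kernel (the paper's $U_3$), the identical key observation that its row sums vanish, and the identity~\eqref{eq:odd} to conclude $t(C,1/2+U)=2^{-\ell}$. The only cosmetic difference is that you deduce $t(Q_{2m},U)=0$ from $Uh_1\equiv 0$ via $t(Q_{2m},U)=\langle U^m h_1,U^m h_1\rangle$, whereas the paper computes $t(P_m,U_3)=0$ for directed paths directly and invokes Proposition~\ref{prop:reorient}; both are the same zero-row-sum argument in slightly different clothing.
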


\begin{proof}
Fix an orientation $C$ of an odd cycle.
Let $U_3$ be the antisymmetric kernel (also see Figure~\ref{fig:U23}) defined as
\[U_3(x,y)=\begin{cases}
           1/2 & \mbox{if $(x,y)\in (0,1/3)\times (2/3,1)$,}\\
           1/2 & \mbox{if $(x,y)\in (1/3,2/3)\times (0,1/3)$,}\\
           1/2 & \mbox{if $(x,y)\in (2/3,1)\times (1/3,2/3)$,}\\
           -1/2 & \mbox{if $(x,y)\in (0,1/3)\times (1/3,2/3)$,}\\
           -1/2 & \mbox{if $(x,y)\in (1/3,2/3)\times (2/3,1)$,}\\
           -1/2 & \mbox{if $(x,y)\in (2/3,1)\times (0,1/3)$, and}\\
	   0 & \mbox{otherwise.}
           \end{cases}\]
Observe that the following holds for every $x\in [0,1]$:
\[\int_{[0,1]}U_3(x,y)\dd y=0.\]
Let $P_{m}$ be a directed path with $m$ edges. Note that
\begin{align*}
t(P_m,U_3) &=\int_{[0,1]^{m+1}}\prod_{i=1}^m U_3(x_{i-1},x_i)\dd x_0\dots\dd x_m\\
           &\hspace{-8.5pt}=\int_{[0,1]}\left(\int_{[0,1]^{m-1}}\prod_{i=1}^{m-1} U_3(x_{i-1},x_i)\dd x_0\dots\dd x_{m-2}\right)\left(\int_{[0,1]}U_3(x_{m-1},x_m)\dd x_m\right)\dd x_{m-1}\\
           &\hspace{-8.5pt}=\int_{[0,1]}\left(\int_{[0,1]^{m-1}}\prod_{i=1}^{m-1} U_3(x_{i-1},x_i)\dd x_0\dots\dd x_{m-2}\right)\cdot 0\dd x_{m-1}=0,
\end{align*}
which yields by Proposition~\ref{prop:reorient} that $t(Q_{2k},U_3)=0$ for every $k\in\NN$.
Hence, we obtain using~\eqref{eq:odd} that $t(C,1/2+U_3)=2^{-\|C\|} = t(C,1/2)$ for the orientation $C$ and
so $C$ is not quasirandom-forcing.
\end{proof}

\begin{figure}[t]
\begin{center}
\epsfbox{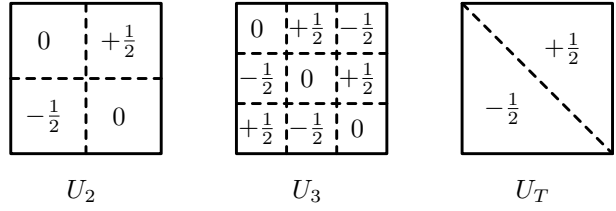}
\end{center}
\caption{The kernels $U_2$, $U_3$ and $U_T$. The origin is in the top left corner.}
\label{fig:U23}
\end{figure}

A standard way of showing that a particular (oriented) graph $H$ is not quasirandom-forcing
is to identify a construction with the lower density of $H$ and
a construction with the larger density of $H$
compared to the expected density in the random construction, and
then interpolate between the two constructions to show that $H$ is not quasirandom-forcing.
We summarize this approach in the next lemma.

\begin{lemma}
\label{lm:W12}
Let $C$ be an orientation of a cycle.
If there exist tournamentons $W_1$ and $W_2$ such that
$t(C,W_1)<2^{-\|C\|}$ and $t(C,W_2)>2^{-\|C\|}$,
then $C$ is not quasirandom-forcing.
\end{lemma}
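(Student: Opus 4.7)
The plan is to apply the intermediate value theorem to a convex combination of $W_1$ and $W_2$. Set $W_t := tW_1 + (1-t)W_2$ for $t \in [0,1]$; by convexity of the set of tournamentons, each $W_t$ is a tournamenton. The function $f(t) := t(C, W_t)$ is a polynomial in $t$ (obtained by expanding the integrand in the definition of $t(C, W_t)$), hence continuous, with $f(0) > 2^{-\|C\|} > f(1)$. The IVT supplies $t^* \in (0,1)$ with $f(t^*) = 2^{-\|C\|}$, and in the generic situation $W_{t^*} \not\equiv 1/2$, already giving a witness that $C$ is not quasirandom-forcing.

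The main obstacle will be the degenerate case $W_{t^*} \equiv 1/2$. Writing $U_i := W_i - 1/2$ for the antisymmetric kernel associated with $W_i$, the degeneracy forces $t^* U_1 + (1-t^*) U_2 \equiv 0$, so $U_2 \equiv -c\, U_1$ for some $c > 0$ (and $U_1 \not\equiv 0$, since $t(C, W_1) \neq 2^{-\|C\|}$). To get around this, I would pass to the one-parameter family $W_r := 1/2 + rU_1$ and study $g(r) := t(C, W_r)$. Expanding via~\eqref{eq:expand} gives $g(r) = \sum_{F \subseteq E(C)} 2^{-(\|C\|-|F|)}\, r^{|F|}\, t(C\langle F\rangle, U_1)$, and by Proposition~\ref{prop:odd} the only non-vanishing terms come from edge sets $F$ such that every component of $C\langle F\rangle$ has an even number of edges, which forces $|F|$ itself to be even. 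Hence $g$ is an \emph{even} polynomial in $r$.

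Using evenness, $g(c) = g(-c) = t(C, W_2) > 2^{-\|C\|}$, while $g(1) = t(C, W_1) < 2^{-\|C\|}$, so in particular $c \neq 1$. Applying the IVT to $g$ on the interval from $\min(c,1)$ to $\max(c,1)$ yields some $r^* \neq 0$ with $g(r^*) = 2^{-\|C\|}$. The required bound $|r^* U_1| \le 1/2$ (ensuring that $1/2 + r^* U_1$ is a tournamenton) follows from $|r^*| < 1$ combined with $|U_1| \le 1/2$ when $c < 1$, or from $|r^*| < c$ combined with $|c\, U_1| \le 1/2$ when $c > 1$. Since $r^* \neq 0$ and $U_1 \not\equiv 0$, the resulting tournamenton is not identically $1/2$, which concludes the argument.
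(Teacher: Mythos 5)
Your proof is correct, but it takes a genuinely different route from the paper's. The paper sidesteps the degenerate case altogether by not using convex combination: it builds a family $W_\alpha$ by placing a rescaled copy of $W_2$ on a diagonal block $(0,\alpha-1)^2$, a rescaled copy of $W_1$ on $(\alpha-1,1)^2$, and $1/2$ elsewhere. Since each $W_\alpha$ literally contains a rescaled copy of $W_1$ (and $W_2$), neither of which is $\equiv 1/2$, the interpolating tournamenton can never be identically $1/2$, so the intermediate-value argument needs no further care. Your argument, by contrast, uses the natural convex combination $tW_1+(1-t)W_2$ and confronts head-on the possibility that the IVT lands on $W_{t^*}\equiv 1/2$. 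The key insight you bring in to resolve this --- that $g(r)=t(C,1/2+rU_1)$ is an \emph{even} polynomial in $r$ because Proposition~\ref{prop:odd} kills every term with an odd $|F|$ --- is clever and is not present in the paper's proof; it lets you reflect $-c$ to $c$ and re-run the IVT between $c$ and $1$. Your bound on $|r^*U_1|$ in the two cases $c<1$ and $c>1$ is also handled correctly (using $|U_1|\le 1/2$ and $|cU_1|=|U_2|\le 1/2$ respectively). The trade-off: the paper's block construction is shorter and more robust (it would work for any graph $C$ without needing the parity structure of cycles), whereas your argument exploits a structural property specific to orientations of paths and cycles but stays within the more familiar convex-combination framework.
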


\begin{proof}
Fix an orientation $C$ of a cycle of length $\ell$ and
the tournamentons $W_1$ and $W_2$.
Since $t(C,W_1)\not=2^{-\ell}$ and $t(C,W_2)\not=2^{-\ell}$,
neither $W_1$ nor $W_2$ is equal to $1/2$ almost everywhere.

We next define tournamentons $W_\alpha$ for $\alpha\in (1,2)$ that
continuously transform $W_1$ into $W_2$.
For $\alpha\in (1,2)$, let $W_\alpha$ be the tournamenton defined as
\[W_\alpha(x,y)=\begin{cases}
                W_2\left(\frac{x}{\alpha-1},\frac{y}{\alpha-1}\right) & \mbox{if $(x,y)\in (0,\alpha-1)^2$,}\\
	        W_1\left(\alpha-1+\frac{x-(\alpha-1)}{2-\alpha},\alpha-1+\frac{y-(\alpha-1)}{2-\alpha}\right) & \mbox{if $(x,y)\in (\alpha-1,1)^2$, and}\\
                1/2 & \mbox{otherwise.}
	        \end{cases}\]
Note that none of the tournamentons $W_\alpha$, $\alpha\in [1,2]$, satisfies that $W_\alpha\equiv 1/2$.
Next define $g:[1,2]\to [0,1]$ as $g(\alpha)=t(C,W_\alpha)$.
Since $g$ is a continuous function on $[1,2]$, $g(1)<2^{-\ell}$ and $g(2)>2^{-\ell}$,
there exists $\alpha_0\in (1,2)$ such that $g(\alpha_0)=2^{-\ell}$.
Since $t(C,W_{\alpha_0})=2^{-\ell}$ but $W_{\alpha_0}$ is not equal to $1/2$ almost everywhere,
the orientation $C$ is not quasirandom-forcing.
\end{proof}

The next lemma guarantees the existence of one of two tournamentons needed to apply Lemma~\ref{lm:W12}.

\begin{lemma}
\label{lm:W1}
Let $C$ be an orientation of an even cycle.
If $\gamma_C=1$, then there exists a tournamenton $W$ such that $t(C,W)>2^{-\|C\|}$, and
if $\gamma_C=-1$, then there exists a tournamenton $W$ such that $t(C,W)<2^{-\|C\|}$.
\end{lemma}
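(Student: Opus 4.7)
The plan is to isolate the $D_\ell$ term in the cycle expansion~\eqref{eq:even} by constructing a tournamenton $W=1/2+U$ for an antisymmetric kernel $U$ with the property $t(Q_{2m},U)=0$ for every positive integer $m$. Under such a choice, every term in the double sum in~\eqref{eq:even} contains at least one factor $t(Q_{2n_i},U)$ and therefore vanishes, so
\[t(C,W)=\frac{1}{2^{\|C\|}}+\gamma_C\, t(D_\ell,U).\]
Provided $t(D_\ell,U)>0$, this gives $t(C,W)>2^{-\|C\|}$ when $\gamma_C=1$ and $t(C,W)<2^{-\|C\|}$ when $\gamma_C=-1$, which is exactly what the lemma claims.

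For the concrete choice of $U$ I would reuse the antisymmetric kernel $U_3$ defined in the proof of Theorem~\ref{thm:odd}. That proof already establishes $\int_{[0,1]}U_3(x,y)\dd y=0$ for every $x\in[0,1]$, which means $U_3 h_1\equiv 0$ for the constant-$1$ function $h_1$. Since $t(Q_{2m},U_3)=\langle U_3^m h_1,U_3^m h_1\rangle$ by the computation in the proof of Lemma~\ref{lm:positive}, all of these densities are zero. Moreover, $W=1/2+U_3$ is a valid tournamenton, because $U_3$ is antisymmetric with values in $[-1/2,1/2]$, so $W\in[0,1]$ and $W(x,y)+W(y,x)=1$.

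The only remaining point is the strict positivity of $t(D_\ell,U_3)$. This is immediate from Lemma~\ref{lm:cycle}, which states that $t(D_{2k},U)\ge 0$ for every antisymmetric kernel $U$ with equality if and only if $U\equiv 0$; since $U_3$ is manifestly not zero almost everywhere, $t(D_\ell,U_3)>0$. Consequently I expect no genuine obstacle in executing this plan: the heavy lifting has already been carried out in the proof of Theorem~\ref{thm:odd} and in Lemma~\ref{lm:cycle}, and the argument reduces to plugging these two facts into identity~\eqref{eq:even}.
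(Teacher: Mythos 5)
Your proposal is correct and follows the same approach as the paper: take $W=1/2+U_3$ with the kernel $U_3$ from the proof of Theorem~\ref{thm:odd}, use the vanishing of $t(Q_{2m},U_3)$ (following from the zero row sums of $U_3$) to reduce identity~\eqref{eq:even} to $t(C,W)=2^{-\ell}+\gamma_C\,t(D_\ell,U_3)$, and invoke Lemma~\ref{lm:cycle} for strict positivity of $t(D_\ell,U_3)$. The only cosmetic difference is how you justify $t(Q_{2m},U_3)=0$ (via $U_3h_1\equiv 0$ and the inner-product formula, rather than the path-density computation used in the proof of Theorem~\ref{thm:odd}); both arguments rest on the same fact.
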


\begin{proof}
Fix an orientation $C$ of an even cycle and let $\ell$ be the length of $C$.
Let $U_3$ be the antisymmetric kernel defined in the proof of Theorem~\ref{thm:odd} and visualized in Figure~\ref{fig:U23}.
As argued in the proof of Theorem~\ref{thm:odd},
it holds that $t(Q_{2k},U_3)=0$ for every $k\in\NN$.
Hence, the identity \eqref{eq:even} yields that
\[t\left(C,\frac{1}{2}+U_3\right)=\frac{1}{2^{\ell}}+\gamma_C t(D_{\ell},U_3).\]
Since $t(D_{\ell},U_3)>0$ by Lemma~\ref{lm:cycle},
it follows that the tournamenton $W=1/2+U_3$ has the property claimed in the statement of the lemma.
\end{proof}

Using Lemmas~\ref{lm:W12} and~\ref{lm:W1}, we obtain our first condition 
implying that an orientation of an even cycle is not quasirandom-forcing.
To state the next theorem, we first define the antisymmetric kernel $U_T$ as
\[U_T(x,y)=\begin{cases}
           1/2 & \mbox{if $0\le x<y\le 1$,}\\
	   -1/2 & \mbox{if $0\le y<x\le 1$, and}\\
	   0 & \mbox{otherwise.}
	   \end{cases}\]
The kernel $U_T$ is visualized in Figure~\ref{fig:U23}.
Note that the tournamenton $1/2+U_T$ is a limit of transitive tournaments.

\begin{theorem}
\label{thm:transitive}
Let $C$ be an orientation of an even cycle of length $\ell$.
If $t(C,1/2+U_T)\le 2^{-\ell}$ and $\gamma_C=1$, then $C$ is not quasirandom-forcing.
Similarly, 
if $t(C,1/2+U_T)\ge 2^{-\ell}$ and $\gamma_C=-1$, then $C$ is not quasirandom-forcing.
\end{theorem}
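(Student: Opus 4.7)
The plan is to deduce Theorem~\ref{thm:transitive} from Lemmas~\ref{lm:W12} and~\ref{lm:W1}, with one small subtlety concerning the boundary case when $t(C,1/2+U_T)$ equals $2^{-\ell}$ exactly. I treat only the case $\gamma_C=1$, since the case $\gamma_C=-1$ is perfectly symmetric.

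First I would fix an orientation $C$ of an even cycle of length $\ell$ with $\gamma_C=1$ and $t(C,1/2+U_T)\le 2^{-\ell}$. Observe that $U_T$ is not almost everywhere zero, so the tournamenton $1/2+U_T$ is not almost everywhere equal to $1/2$. I then split into two cases based on whether the assumed inequality is strict.

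In the boundary case $t(C,1/2+U_T)=2^{-\ell}$, the tournamenton $1/2+U_T$ itself certifies directly from the definition that $C$ is not quasirandom-forcing: it achieves the target density $2^{-\|C\|}$ without being quasirandom. In the strict case $t(C,1/2+U_T)<2^{-\ell}$, I would invoke Lemma~\ref{lm:W1}, which under the assumption $\gamma_C=1$ produces a tournamenton $W_2$ with $t(C,W_2)>2^{-\|C\|}$. Setting $W_1=1/2+U_T$, I have $t(C,W_1)<2^{-\|C\|}<t(C,W_2)$, and Lemma~\ref{lm:W12} applied to the pair $(W_1,W_2)$ immediately yields that $C$ is not quasirandom-forcing.

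The case $\gamma_C=-1$ is handled identically, with the roles of $W_1$ and $W_2$ swapped: Lemma~\ref{lm:W1} now supplies a tournamenton with density strictly less than $2^{-\|C\|}$, and $1/2+U_T$ (when the inequality $t(C,1/2+U_T)\ge 2^{-\ell}$ is strict) plays the role of a tournamenton with density strictly greater. There is no real obstacle here; the argument is a clean application of the machinery set up earlier in Section~\ref{sec:negative}, and the only point requiring slight care is recognizing that the non-strict hypothesis covers the trivial boundary case, where the non-quasirandom tournamenton $1/2+U_T$ itself already witnesses the failure of quasirandom-forcing.
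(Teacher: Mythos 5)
Your proposal is correct and matches the paper's own proof essentially word for word: the same split into the boundary case where $1/2+U_T$ itself witnesses failure of quasirandom-forcing and the strict case where Lemma~\ref{lm:W1} supplies the complementary tournamenton for Lemma~\ref{lm:W12}. No differences worth noting.
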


\begin{proof}
Fix an orientation $C$ of a cycle with even length $\ell$.
We will analyze the case $\gamma_C=1$ only as
the argument concerning the case $\gamma_C=-1$ is completely symmetric.
Hence, we assume that $\gamma_C=1$ and $t(C,1/2+U_T)\le 2^{-\ell}$ in the rest of the proof.
If $t(C,1/2+U_T)=2^{-\ell}$,
then the tournamenton $W=1/2+U_T$ is a tournamenton such that $t(C,W)=2^{-\ell}$ but $W$ is not equal to $1/2$ almost everywhere;
hence, $C$ is not quasirandom-forcing.
If $t(C,1/2+U_T)<2^{-\ell}$, we set $W_1=1/2+U_T$.
Since $\gamma_C=1$,
Lemma~\ref{lm:W1} implies that there exists a tournamenton $W_2$ such that $t(C,W_2)>2^{-\ell}$.
The existence of tournamentons $W_1$ and $W_2$ such that $t(C,W_1)<2^{-\ell}$ and $t(C,W_2)>2^{-\ell}$
implies that $C$ is not quasirandom-forcing by Lemma~\ref{lm:W12},
which completes the proof of the theorem.
\end{proof}

We next present our second condition implying that an orientation of an even cycle is not quasirandom-forcing.

\begin{theorem}
\label{thm:neg}
Let $C$ be an orientation of an even cycle.
If both $\alpha_C(2)$ and $\gamma_C$ are non-zero and have different signs,
then $C$ is not quasirandom-forcing.
\end{theorem}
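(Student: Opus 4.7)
The plan is to apply Lemma~\ref{lm:W12}, which demands tournamentons $W_1, W_2$ satisfying $t(C,W_1)<2^{-\ell}<t(C,W_2)$ (where $\ell$ denotes the length of $C$). Since $\gamma_C\ne 0$, one of the two is already delivered by Lemma~\ref{lm:W1}: when $\gamma_C=1$ it supplies $W_2$, and when $\gamma_C=-1$ it supplies $W_1$. What remains is to construct the complementary tournamenton, and I would do this by perturbing the constant tournamenton $1/2$ slightly in the direction of the kernel $U_T$ defined before Theorem~\ref{thm:transitive}.

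For $\epsilon\in(0,1]$, set $W_\epsilon=1/2+\epsilon U_T$; since $\|\epsilon U_T\|_\infty\le 1/2$, this is a valid tournamenton. Plugging $U=\epsilon U_T$ into \eqref{eq:even} and using $t(Q_{2m},\epsilon U_T)=\epsilon^{2m}t(Q_{2m},U_T)$ and $t(D_\ell,\epsilon U_T)=\epsilon^\ell t(D_\ell,U_T)$, I expect to obtain an expansion of the shape
\[
t(C,W_\epsilon)=\frac{1}{2^\ell}+\frac{\alpha_C(2)}{2^{\ell-2}}\,t(Q_2,U_T)\,\epsilon^2+R(\epsilon),
\]
where $R(\epsilon)$ aggregates every other summand of \eqref{eq:even} together with the $\gamma_C\,t(D_\ell,\epsilon U_T)$ contribution. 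A direct calculation gives $t(Q_2,U_T)=\int_0^1(1/2-x)^2\dd x=1/12>0$; this value also appears implicitly in the proof of Lemma~\ref{lm:Q2}, where $U_T$ is the extremizer.

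The crux is that every term contributing to $R(\epsilon)$ carries at least a factor of $\epsilon^4$: any tuple $(n_1,\ldots,n_k)\neq(1)$ satisfies $2(n_1+\cdots+n_k)\ge 4$, and the $D_\ell$ contribution carries $\epsilon^\ell$ with $\ell\ge 4$. Since the index set in \eqref{eq:even} is finite for fixed $\ell$ and all $t(Q_{2n_i},U_T)$ and $t(D_\ell,U_T)$ are bounded by constants depending only on $\ell$, I would conclude that $|R(\epsilon)|\le C_\ell\,\epsilon^4$ uniformly on $(0,1]$. Consequently, for all sufficiently small $\epsilon>0$ the sign of $t(C,W_\epsilon)-2^{-\ell}$ equals the sign of $\alpha_C(2)$.

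It then remains only to combine cases. If $\gamma_C=1$ and $\alpha_C(2)<0$, then $W_\epsilon$ plays the role of $W_1$ and Lemma~\ref{lm:W1} supplies $W_2$; if $\gamma_C=-1$ and $\alpha_C(2)>0$, then $W_\epsilon$ plays the role of $W_2$ and Lemma~\ref{lm:W1} supplies $W_1$. Either way, Lemma~\ref{lm:W12} yields that $C$ is not quasirandom-forcing. I do not foresee any serious obstacle; the only step that needs some care is the uniform remainder estimate, but this is routine once one observes that the $\alpha_C(2)\epsilon^2$ term is the unique contribution of lowest order in $\epsilon$.
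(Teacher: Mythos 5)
Your proof is correct and follows essentially the same strategy as the paper's: expand $t(C,1/2+\varepsilon U)$ via \eqref{eq:even}, observe that the $\alpha_C(2)$ term is the unique $\Theta(\varepsilon^2)$ contribution while everything else is $O(\varepsilon^4)$, pick $\varepsilon$ small so that the sign of $t(C,1/2+\varepsilon U)-2^{-\ell}$ matches the sign of $\alpha_C(2)$, and combine with Lemma~\ref{lm:W1} via Lemma~\ref{lm:W12}. The only difference is cosmetic: the paper perturbs in the direction of the kernel $U_2$ (for which $t(Q_2,U_2)=1/16$), whereas you perturb in the direction of $U_T$ (for which $t(Q_2,U_T)=1/12$); either choice works since what matters is only that $t(Q_2,U)>0$.
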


\begin{proof}
Fix an orientation $C$ of an even cycle and let $\ell$ be the length of $C$.
We will analyze the case $\alpha_C(2)>0$ and $\gamma_C=-1$;
the argument concerning the other case $\alpha_C(2)<0$ and $\gamma_C=1$ is completely symmetric.

Let $U_2$ be the antisymmetric kernel (also see Figure~\ref{fig:U23}) defined as
\[U_2(x,y)=\begin{cases}
	   1/2 & \mbox{if $(x,y)\in (1/2,1)\times (0,1/2)$,}\\
           -1/2 & \mbox{if $(x,y)\in (0,1/2)\times (1/2,1)$, and}\\
	   0 & \mbox{otherwise.}
	   \end{cases}\]
Observe that $t(Q_2,U_2)=1/16$.
The identity \eqref{eq:even} implies that
\[t(C,1/2+\varepsilon U_2)=\frac{1}{2^{\ell}}+\frac{\alpha_C(2)\varepsilon^2}{2^{\ell-2}\cdot 16}+O(\varepsilon^4).\]
Hence, there exists $\varepsilon_0\in (0,1)$ such that $t(C,1/2+\varepsilon_0 U_2)>2^{-\ell}$;
set $W_2$ to be the tournamenton $1/2+\varepsilon_0 U_2$.
Since $\gamma_C=-1$,
Lemma~\ref{lm:W1} implies that there exists a tournamenton $W_1$ such that $t(C,W_1)<2^{-\ell}$.
The existence of tournamentons $W_1$ and $W_2$ such that $t(C,W_1)<2^{-\ell}$ and $t(C,W_2)>2^{-\ell}$
implies that $C$ is not quasirandom-forcing by Lemma~\ref{lm:W12}.
\end{proof}

The third condition implying that an orientation of an even cycle is not quasirandom-forcing
is given in the next theorem.

\begin{theorem}
\label{thm:neg4}
Let $C$ be an orientation of an even cycle of length at least 6.
If $\alpha_C(2)=0$, and both $\alpha_C(4)+\alpha_C(2,2)$ and $\gamma_C$ are non-zero and have different signs,
then $C$ is not quasirandom-forcing.
\end{theorem}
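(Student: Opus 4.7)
The plan is to mirror the arguments for Theorems~\ref{thm:transitive} and~\ref{thm:neg}: I want to produce two tournamentons $W_1,W_2$ with $t(C,W_1)<2^{-\ell}<t(C,W_2)$, writing $\ell$ for the length of $C$, and then appeal to Lemma~\ref{lm:W12}. I restrict attention to the case $\gamma_C=1$ and $\alpha_C(4)+\alpha_C(2,2)<0$; the remaining case is completely symmetric. Lemma~\ref{lm:W1} immediately supplies $W_2$ on the correct side since $\gamma_C=1$, so the whole task is to construct $W_1$. Following the template of Theorem~\ref{thm:neg}, I take $W_1=1/2+\varepsilon U_2$ for a suitably small $\varepsilon>0$ and analyze $t(C,W_1)$ through~\eqref{eq:even}, aiming to show that the coefficient of $\varepsilon^4$ is strictly negative and dominates every other contribution as $\varepsilon\to 0$.

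The key observation that drives the computation is that $h_1$ is an eigenfunction of $U_2^2$. A direct calculation shows $(U_2h_1)(x)=-1/4$ for $x\in(0,1/2)$ and $(U_2h_1)(x)=1/4$ for $x\in(1/2,1)$; applying $U_2$ once more yields $U_2^2h_1\equiv -\tfrac{1}{16}h_1$. Combined with $U_2^*=-U_2$ (antisymmetry), this gives
\[t(Q_{2k},U_2)=\|U_2^{k}h_1\|_2^{2}=(-1)^{k}\langle h_1,U_2^{2k}h_1\rangle=\left(\tfrac{1}{16}\right)^{k}\qquad\text{for every }k\ge 1.\]
Consequently $\prod_{i=1}^{k}t(Q_{2n_i},\varepsilon U_2)=(\varepsilon^2/16)^{n_1+\cdots+n_k}$, a quantity that depends only on $s:=n_1+\cdots+n_k$. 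This collapse is exactly what causes $\alpha_C(4)$ and $\alpha_C(2,2)$ to be pooled into a common Taylor coefficient.

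Substituting into~\eqref{eq:even} and regrouping by $s$, the coefficient of $\varepsilon^{2s}$ turns out to be $2^{-\ell-2s}$ times the sum of $\alpha_C(2n_1,\ldots,2n_k)$ over admissible tuples with $n_1+\cdots+n_k=s$. The $s=1$ sum is $\alpha_C(2)$, which vanishes by hypothesis; the $s=2$ sum is exactly $\alpha_C(4)+\alpha_C(2,2)$; the contributions with $s\ge 3$ are $O(\varepsilon^6)$; and $\gamma_C\varepsilon^\ell t(D_\ell,U_2)$ is also $O(\varepsilon^6)$ because $\ell\ge 6$. Therefore
\[t\!\left(\tfrac12+\varepsilon U_2, C\right)=\frac{1}{2^\ell}+\frac{\alpha_C(4)+\alpha_C(2,2)}{2^{\ell+4}}\,\varepsilon^4+O(\varepsilon^6),\]
and since $\alpha_C(4)+\alpha_C(2,2)<0$, fixing $\varepsilon_0>0$ sufficiently small yields $W_1=1/2+\varepsilon_0 U_2$ with $t(C,W_1)<2^{-\ell}$. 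Feeding $W_1,W_2$ into Lemma~\ref{lm:W12} finishes the argument.

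The only genuinely delicate point is the eigenfunction identity $U_2^2h_1\equiv -\tfrac{1}{16}h_1$ and the subsequent bookkeeping that isolates the $\varepsilon^4$ coefficient: one must confirm that \emph{no} $\varepsilon^2$ term survives (this uses $\alpha_C(2)=0$) and that the contribution $\gamma_C t(D_\ell,\varepsilon U_2)$ sits strictly above order $\varepsilon^4$ (this uses $\ell\ge 6$). Both hypotheses are therefore essential, and every other step is routine.
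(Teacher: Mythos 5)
Your proposal is correct and follows essentially the same route as the paper's own proof: scale the step kernel $U_2$ by $\varepsilon$, expand $t(C,1/2+\varepsilon U_2)$ via~\eqref{eq:even} to isolate the $\varepsilon^4$ coefficient $\frac{\alpha_C(4)+\alpha_C(2,2)}{2^{\ell+4}}$, then combine the resulting tournamenton with the one furnished by Lemma~\ref{lm:W1} and apply Lemma~\ref{lm:W12}. Your derivation of $t(Q_{2k},U_2)=16^{-k}$ via the eigenfunction identity $U_2^2h_1\equiv-\tfrac{1}{16}h_1$ is a clean way to justify the densities the paper merely records, but the overall argument is the same.
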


\begin{proof}
Fix an orientation $C$ of an even cycle and let $\ell$ be the length of the cycle.
The proof is similar to the proof of Theorem~\ref{thm:neg}.
We will again analyze the case $\alpha_C(2)>0$ and $\gamma_C=-1$ only as
the argument concerning the other case $\alpha_C(2)<0$ and $\gamma_C=1$ is completely symmetric.

Since $\gamma_C$ is equal to $-1$,
Lemma~\ref{lm:W1} implies that there exists a tournamenton $W_1$ such that $t(C,W_1)<2^{-\ell}$.
Let $U_2$ be the antisymmetric kernel defined in the proof of Theorem~\ref{thm:neg} and visualized in Figure~\ref{fig:U23}.
Observe that $t(Q_2,U_2)=1/16$ and $t(Q_4,U_2)=1/256$.
The identity \eqref{eq:even} implies (note that $\alpha_C(2)=0$) that
\[t(C,1/2+\varepsilon U_2)=\frac{1}{2^{\ell}}+\frac{\left(\alpha_C(4)+\alpha_C(2,2)\right)\varepsilon^4}{2^{\ell-4}\cdot 256}+O(\varepsilon^6).\]
Hence, there exists $\varepsilon_0\in (0,1)$ such that $t(C,1/2+\varepsilon_0 U_2)>2^{-\ell}$. Set $W_2$ to be the tournamenton $1/2+\varepsilon_0 U_2$.
The existence of tournamentons $W_1$ and $W_2$ such that $t(C,W_1)<2^{-\ell}$ and $t(C,W_2)>2^{-\ell}$
implies that $C$ is not quasirandom-forcing by Lemma~\ref{lm:W12}.
\end{proof}

In Theorems~\ref{thm:neg} and~\ref{thm:neg4},
we applied Lemma~\ref{lm:W12} with a tournamenton whose existence is guaranteed by Lemma~\ref{lm:W1} and
a tournamenton whose existence is derived by choosing a suitable antisymmetric kernel $U$ and analyzing the identity \eqref{eq:even}.
The final theorem of this section extracts the idea of choosing a suitable antisymmetric kernel $U$
to give a condition implying that an orientation of an even cycle is not quasirandom-forcing.

\begin{theorem}
\label{thm:negsum}
Let $C$ be an orientation of an even cycle of length $\ell$.
If $\gamma_C=1$ and there exists an antisymmetric kernel $U:[0,1]^2\to [-1/2,1/2]$ such that
\[ \sum_{\substack{1\le n_1\le\cdots\le n_k\\ k+2n_1+\cdots+2n_k\le\ell}}
                   \frac{\alpha_C(2n_1,\ldots,2n_k)}{2^{\ell-2n_1-\cdots-2n_k}}\prod_{i=1}^k t(Q_{2n_i},U)+
   \gamma_C\cdot t(D_{\ell},U)<0,\]
then $C$ is not quasirandom-forcing.

Similarly,
if $\gamma_C=-1$ and there exists an antisymmetric kernel $U:[0,1]^2\to [-1/2,1/2]$ such that
\[ \sum_{\substack{1\le n_1\le\cdots\le n_k\\ k+2n_1+\cdots+2n_k\le\ell}}
                   \frac{\alpha_C(2n_1,\ldots,2n_k)}{2^{\ell-2n_1-\cdots-2n_k}}\prod_{i=1}^k t(Q_{2n_i},U)+
   \gamma_C\cdot t(D_{\ell},U)>0,\]
then $C$ is not quasirandom-forcing.
\end{theorem}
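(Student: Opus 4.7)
The plan is to observe that the hypothesis directly provides one of the two tournamentons needed to apply Lemma~\ref{lm:W12}, while Lemma~\ref{lm:W1} supplies the other. The key point is that an antisymmetric kernel $U:[0,1]^2\to[-1/2,1/2]$ gives rise to a tournamenton $W=1/2+U$: indeed the values of $W$ lie in $[0,1]$, and antisymmetry of $U$ yields $W(x,y)+W(y,x)=1$ pointwise.

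First I would rewrite the sum appearing in the hypothesis using the identity \eqref{eq:even}. That identity says
\[
t\!\left(C,\tfrac{1}{2}+U\right)-\frac{1}{2^{\ell}}
=\!\!\sum_{\substack{1\le n_1\le\cdots\le n_k\\ k+2n_1+\cdots+2n_k\le\ell}}\!\!\frac{\alpha_C(2n_1,\ldots,2n_k)}{2^{\ell-2n_1-\cdots-2n_k}}\prod_{i=1}^k t(Q_{2n_i},U)+\gamma_C\cdot t(D_{\ell},U),
\]
so the hypothesis in the $\gamma_C=1$ case is precisely $t(C,1/2+U)<1/2^{\ell}$, and in the $\gamma_C=-1$ case is $t(C,1/2+U)>1/2^{\ell}$.

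Next I would invoke Lemma~\ref{lm:W1} to produce a tournamenton going the other way. In the case $\gamma_C=1$, set $W_1=1/2+U$, so that $t(C,W_1)<2^{-\|C\|}$ by the computation above; Lemma~\ref{lm:W1} then provides a tournamenton $W_2$ with $t(C,W_2)>2^{-\|C\|}$. Symmetrically, in the case $\gamma_C=-1$, set $W_2=1/2+U$ to get $t(C,W_2)>2^{-\|C\|}$, and Lemma~\ref{lm:W1} provides $W_1$ with $t(C,W_1)<2^{-\|C\|}$. In either case, Lemma~\ref{lm:W12} immediately yields that $C$ is not quasirandom-forcing.

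No obstacle is really expected here, since all the technical work is absorbed into the earlier lemmas; the theorem is essentially a clean packaging of the strategy already used in Theorems~\ref{thm:neg} and~\ref{thm:neg4}, namely ``find any antisymmetric perturbation $U$ that pushes $t(C,1/2+U)$ to the wrong side of $2^{-\ell}$, then interpolate against the construction from Lemma~\ref{lm:W1}.'' The only small point to verify carefully is that $1/2+U$ is indeed a tournamenton, which follows from the bound $\|U\|_\infty\le 1/2$ and antisymmetry as noted above.
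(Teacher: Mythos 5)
Your proof is correct and follows essentially the same route as the paper's: you use identity~\eqref{eq:even} to interpret the hypothesis as $t(C,1/2+U)$ being strictly on the wrong side of $2^{-\ell}$, pair that tournamenton with the one from Lemma~\ref{lm:W1}, and conclude via Lemma~\ref{lm:W12}. The one small addition you make beyond the paper---explicitly checking that $1/2+U$ is a tournamenton---is a reasonable point to note, though the paper takes it as immediate.
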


\begin{proof}
Fix an orientation $C$ of an even cycle and let $\ell$ be the length of $C$.
We will analyze the case $\gamma_C=1$;
the argument concerning the other case $\gamma_C=-1$ is completely symmetric.

Let $U$ be the kernel with the properties given in the case $\gamma_C=1$ of the statement of the theorem.
The identity \eqref{eq:even} implies that $t(C,1/2+U)<2^{-\ell}$.
We set $W_1$ to be the tournamenton $1/2+U$.
Since $\gamma_C$ is equal to $1$,
Lemma~\ref{lm:W1} implies that there exists a tournamenton $W_2$ such that $t(C,W_2)>2^{-\ell}$.
The existence of tournamentons $W_1$ and $W_2$ such that $t(C,W_1)<2^{-\ell}$ and $t(C,W_2)>2^{-\ell}$
implies that $C$ is not quasirandom-forcing by Lemma~\ref{lm:W12}.
\end{proof}

Two particular kernels that we will apply Theorem~\ref{thm:negsum} with are the kernels $U_2$ and $U_T$,
which are visualized in Figure~\ref{fig:U23}.
The kernel $U_2$ is the antisymmetric kernel defined in the proof of Theorem~\ref{thm:neg}, and
the kernel $U_T$ is the antisymmetric kernel defined before Theorem~\ref{thm:transitive}.
For reference, we state the densities of oriented graphs $Q_{2n}$ and $D_{2n}$ in the kernel $U_2$:
\begin{align*}
  t(Q_{2n},U_2) & =2^{-4n} \mbox{ for every integer $n\ge 1$, and} \\
  t(D_{2n},U_2) & =2^{-4n} \mbox{ for every integer $n\ge 2$.}
\end{align*}
Similarly, we state the densities of oriented graphs $Q_{2n}$ for small values of $n$ in the kernel~$U_T$:
\[t(Q_2,U_T)=\frac{2}{2^2\cdot 3!}=\frac{1}{12}\mbox{, }
  t(Q_4,U_T)=\frac{16}{2^4\cdot 5!}=\frac{1}{120}\mbox{, }
  t(Q_6,U_T)=\frac{272}{2^6\cdot 7!}=\frac{17}{4\cdot 7!}\mbox{, }\]
\[  t(Q_8,U_T)=\frac{7936}{2^8\cdot 9!}=\frac{31}{9!}\mbox{, }
  t(Q_{10},U_T)=\frac{353792}{2^{10}\cdot 11!}=\frac{691}{2\cdot 11!}\mbox{, and}\]
\[t(D_{2n},U_T) = \frac{1}{4}t(Q_{2n-2},U_T) \mbox{ for every integer $n\ge 2$.}\]
The values of $t(Q_{2n},U_T)$ can be determined by looking at all bijections from $V(Q_{2n})$ to $\{1, 2, \ldots, 2n+1\}$ and checking the parity of the number of edges keeping the order.

\section{Classification of orientations}
\label{sec:specific}

The general results presented in Sections~\ref{sec:general} and~\ref{sec:negative}
are strong enough to provide a full classification which orientations of cycles of length four, six and eight
are quasirandom-forcing.
We summarize results concerning orientations of cycles of length four, six and eight in Tables~\ref{tab:4}--\ref{tab:8}.
In particular, $3$ out of $4$ non-isomorphic orientations of the cycle of length four are quasirandom-forcing,
$4$ out of $8$ non-isomorphic orientations of the cycle of length six are quasirandom-forcing, and
$9$ out of $18$ non-isomorphic orientations of the cycle of length eight are quasirandom-forcing.
Tables~\ref{tab:4}--\ref{tab:8} also contain
the values of the coefficients $\alpha_C(2n_1,\ldots,2n_k)$ and $\gamma_C$ for each orientation $C$
so that it is easy to verify that the assumptions of the used theorems are satisfied.

\begin{table}
\begin{center}
\begin{tabular}{|c|c|c|cl|}
\hline
$C$ & $\alpha_C(2)$ & $\gamma_C$ & \multicolumn{2}{l|}{quasirandom-forcing} \\
\hline
\cepsfbox{qcycle-101.mps} & $-4$ & $+1$ & no  & by Theorem~\ref{thm:neg} \\
\cepsfbox{qcycle-102.mps} &  $0$ & $-1$ & yes & by Theorem~\ref{thm:QRgeneral} \\
\cepsfbox{qcycle-103.mps} &  $0$ & $+1$ & yes & by Theorem~\ref{thm:QRgeneral} \\
\cepsfbox{qcycle-104.mps} & $+4$ & $+1$ & yes & by Theorem~\ref{thm:QRgeneral} \\[4pt]
\hline
\end{tabular}
\end{center}
\caption{Classification of all orientations of a cycle of length four.}
\label{tab:4}
\end{table}

\begin{table}
\begin{center}
\begin{tabular}{|c|ccc|c|cl|}
\hline
$C$ & $(2)$ & $(4)$ & $(2,2)$ & $\gamma_C$ & \multicolumn{2}{l|}{quasirandom-forcing} \\
\hline
\cepsfbox{qcycle-201.mps} & $-6$ & $+6$ & $+3$ & $-1$ & yes & by Theorem~\ref{thm:QRgeneral} ($-0.161$) \\
\cepsfbox{qcycle-202.mps} & $-2$ & $-2$ & $-1$ & $+1$ & no  & by Theorem~\ref{thm:neg} \\
\cepsfbox{qcycle-203.mps} & $-2$ & $+2$ & $-1$ & $-1$ & yes & by Theorem~\ref{thm:QRgeneral} ($-0.074$) \\
\cepsfbox{qcycle-204.mps} & $+2$ & $-2$ & $-1$ & $-1$ & no  & by Theorem~\ref{thm:neg} \\
\cepsfbox{qcycle-205.mps} & $-2$ & $-2$ & $+3$ & $+1$ & no  & by Theorem~\ref{thm:neg} \\
\cepsfbox{qcycle-206.mps} & $+2$ & $-2$ & $+3$ & $-1$ & no  & by Theorem~\ref{thm:neg} \\
\cepsfbox{qcycle-207.mps} & $+2$ & $+2$ & $-1$ & $+1$ & yes & by Theorem~\ref{thm:QRgeneral} ($+0.104$) \\
\cepsfbox{qcycle-208.mps} & $+6$ & $+6$ & $+3$ & $+1$ & yes & by Theorem~\ref{thm:QRgeneral} ($+0.375$) \\[4pt]
\hline
\end{tabular}
\end{center}
\caption{Classification of all orientations of a cycle of length six.
         In case the orientation is quasirandom-forcing by Theorem~\ref{thm:QRgeneral},
	 the approximate value of the sum from the statement of Theorem~\ref{thm:QRgeneral} is given in the parenthesis.
	 For brevity, the coefficients $\alpha_C(2n_1,\ldots,2n_k)$ are represented just by $(2n_1,\ldots,2n_k)$ in the first line.}
\label{tab:6}
\end{table}

\begin{table}
\begin{center}
\begin{tabular}{|c|ccccc|c|cl|}
\hline
$C$ & $(2)$ & $(4)$ & $(2,2)$ & $(6)$ & $(2,4)$ & $\gamma_C$ & \multicolumn{2}{l|}{quasirandom-forcing} \\
\hline
  \cepsfbox{qcycle-301.mps} & $-8$ & $+8$ & $+12$ & $-8$ & $-8$ & $+1$ & no  & by Theorem~\ref{thm:neg} \\
  \cepsfbox{qcycle-302.mps} & $-4$ & $+0$ &  $+0$ & $+4$ & $+4$ & $-1$ & yes & by Theorem~\ref{thm:QRgeneral} ($-0.044$) \\
  \cepsfbox{qcycle-303.mps} & $-4$ & $+4$ &  $+0$ & $-4$ & $+0$ & $+1$ & no  & by Theorem~\ref{thm:neg} \\
  \cepsfbox{qcycle-304.mps} & $+0$ & $+0$ &  $-8$ & $+0$ & $+0$ & $+1$ & no  & by Theorem~\ref{thm:neg4} \\
  \cepsfbox{qcycle-305.mps} & $-4$ & $+0$ &  $+4$ & $+4$ & $-4$ & $-1$ & yes & by Theorem~\ref{thm:QRgeneral} ($-0.031$) \\
  \cepsfbox{qcycle-306.mps} & $+0$ & $-4$ &  $+0$ & $+0$ & $-4$ & $+1$ & no  & by Theorem~\ref{thm:neg4} \\
  \cepsfbox{qcycle-307.mps} & $+0$ & $+0$ &  $-4$ & $+0$ & $+0$ & $-1$ & yes & by Theorem~\ref{thm:QRsigns} \\
  \cepsfbox{qcycle-308.mps} & $-4$ & $+0$ &  $+4$ & $-4$ & $+4$ & $+1$ & no  & by Theorem~\ref{thm:neg} \\
  \cepsfbox{qcycle-309.mps} & $+0$ & $-8$ &  $+4$ & $+0$ & $+0$ & $+1$ & no  & by Theorem~\ref{thm:neg4} \\
  \cepsfbox{qcycle-310.mps} & $+0$ & $+0$ &  $+0$ & $+0$ & $+0$ & $-1$ & yes & by Theorem~\ref{thm:QRgeneral} ($0$) \\
  \cepsfbox{qcycle-311.mps} & $+4$ & $+0$ &  $+0$ & $-4$ & $-4$ & $-1$ & no  & by Theorem~\ref{thm:neg} \\
  \cepsfbox{qcycle-312.mps} & $+0$ & $-4$ &  $+0$ & $+0$ & $+4$ & $+1$ & no  & by Theorem~\ref{thm:neg4} \\
  \cepsfbox{qcycle-313.mps} & $+0$ & $+0$ &  $+0$ & $+0$ & $+0$ & $+1$ & yes & by Theorem~\ref{thm:QRgeneral} ($0$) \\
  \cepsfbox{qcycle-314.mps} & $+4$ & $+0$ &  $+4$ & $-4$ & $+4$ & $-1$ & no  & by Theorem~\ref{thm:neg} \\
  \cepsfbox{qcycle-315.mps} & $+4$ & $+4$ &  $+0$ & $+4$ & $+0$ & $+1$ & yes & by Theorem~\ref{thm:QRgeneral} ($+0.063$) \\
  \cepsfbox{qcycle-316.mps} & $+4$ & $+0$ &  $+4$ & $+4$ & $-4$ & $+1$ & yes & by Theorem~\ref{thm:QRgeneral} ($+0.054$) \\
  \cepsfbox{qcycle-317.mps} & $+0$ & $+8$ &  $-4$ & $+0$ & $+0$ & $+1$ & yes & by Theorem~\ref{thm:QRgeneral4} ($+0.250$) \\
  \cepsfbox{qcycle-318.mps} & $+8$ & $+8$ & $+12$ & $+8$ & $+8$ & $+1$ & yes & by Theorem~\ref{thm:QRgeneral} ($+0.125$) \\[4pt]
\hline
\end{tabular}
\end{center}
\caption{Classification of all orientations of a cycle of length eight.
         In case the orientation is quasirandom-forcing by Theorem~\ref{thm:QRgeneral} or Theorem~\ref{thm:QRgeneral4},
	 the approximate value of the sum from the statement of the respective theorem is given in the parenthesis.
	 For brevity, the coefficients $\alpha_C(2n_1,\ldots,2n_k)$ are represented just by $(2n_1,\ldots,2n_k)$ in the first line.}
\label{tab:8}
\end{table}

The rest of the section is devoted to the classification of orientations of the cycle of length ten.
There are $44$ non-isomorphic orientations of the cycle of length ten.
There are $15$ orientations that are quasirandom-forcing by Theorem~\ref{thm:QRgeneral};
these orientations are listed in Table~\ref{tab:10:QRgeneral}.
There are $23$ orientations that are not quasirandom-forcing:
$18$ orientations are not quasirandom-forcing by Theorem~\ref{thm:neg} and $5$ by Theorem~\ref{thm:negsum}.
These orientations are listed in Table~\ref{tab:10:neg} and in Table~\ref{tab:10:negsum}, respectively.

\begin{table}
\begin{center}
\begin{tabular}{|c|ccccccccc|c|c|}
\hline
$C$ & $(2)$ & $(4)$ & $(2,2)$ & $(6)$ & $(2,4)$ & $(2,2,2)$ & $(8)$ & $(2,6)$ & $(4,4)$ & $\gamma_C$ & sum \\
\hline
  \cepsfbox{qcycle-403.mps} & $-6$ &  $+6$ &  $+5$ &  $-6$ &  $-6$ &  $+2$ &  $+6$ &  $+2$ &  $+1$ &  $-1$ & $-0.00436$ \\
  \cepsfbox{qcycle-406.mps} & $-2$ &  $-2$ &  $-3$ &  $+2$ &  $+6$ &  $-6$ &  $+2$ &  $+6$ &  $+1$ &  $-1$ & $-0.00135$ \\
  \cepsfbox{qcycle-408.mps} & $-6$ &  $+2$ &  $+9$ &  $-2$ &  $-2$ &  $-2$ &  $+6$ &  $-2$ &  $-3$ &  $-1$ & $-0.00699$ \\
  \cepsfbox{qcycle-409.mps} & $-2$ &  $-6$ &  $+1$ &  $+6$ &  $+2$ &  $-2$ &  $+2$ &  $+2$ &  $+1$ &  $-1$ & $-0.00044$ \\
  \cepsfbox{qcycle-412.mps} & $-2$ &  $-2$ &  $-3$ &  $+2$ &  $+6$ &  $+2$ &  $+2$ &  $-2$ &  $-3$ &  $-1$ & $-0.00197$ \\
  \cepsfbox{qcycle-413.mps} & $-2$ &  $+2$ &  $-3$ &  $-2$ &  $+2$ &  $-2$ &  $+2$ &  $+2$ &  $-3$ &  $-1$ & $-0.00264$ \\
  \cepsfbox{qcycle-418.mps} & $-2$ &  $-2$ &  $+1$ &  $+2$ &  $-2$ &  $+2$ &  $+2$ &  $-2$ &  $+1$ &  $-1$ & $-0.00363$ \\
  \cepsfbox{qcycle-432.mps} & $+2$ &  $-2$ &  $+1$ &  $-2$ &  $+2$ &  $-2$ &  $+2$ &  $-2$ &  $+1$ &  $+1$ & $+0.00207$ \\
  \cepsfbox{qcycle-434.mps} & $-2$ &  $+2$ &  $+1$ &  $-2$ &  $+2$ &  $-2$ &  $+2$ &  $-6$ &  $+5$ &  $-1$ & $-0.00070$ \\
  \cepsfbox{qcycle-436.mps} & $+2$ &  $+2$ &  $-3$ &  $+2$ &  $-2$ &  $+2$ &  $+2$ &  $+2$ &  $-3$ &  $+1$ & $+0.00221$ \\
  \cepsfbox{qcycle-439.mps} & $+2$ &  $+2$ &  $+1$ &  $+2$ &  $-2$ &  $+2$ &  $+2$ &  $-6$ &  $+5$ &  $+1$ & $+0.00547$ \\
  \cepsfbox{qcycle-441.mps} & $+6$ &  $+6$ &  $+5$ &  $+6$ &  $+6$ &  $-2$ &  $+6$ &  $+2$ &  $+1$ &  $+1$ & $+0.02257$ \\
  \cepsfbox{qcycle-442.mps} & $+6$ &  $+2$ &  $+9$ &  $+2$ &  $+2$ &  $+2$ &  $+6$ &  $-2$ &  $-3$ &  $+1$ & $+0.02237$ \\
  \cepsfbox{qcycle-443.mps} & $+2$ &  $+6$ &  $-3$ &  $+6$ &  $+2$ &  $-2$ &  $+2$ &  $-2$ &  $+1$ &  $+1$ & $+0.00261$ \\
  \cepsfbox{qcycle-444.mps} & $+10$ &  $+10$ &  $+25$ &  $+10$ &  $+30$ &  $+10$ &  $+10$ &  $+10$ &  $+5$ &  $+1$ & $+0.03906$ \\[5pt]
\hline
\end{tabular}
\end{center}
\caption{Orientations of a cycle of length ten that are quasirandom-forcing by Theorem~\ref{thm:QRgeneral};
         the approximate value of the sum from the statement of the theorem is given in the last column.
	 For brevity, the coefficients $\alpha_C(2n_1,\ldots,2n_k)$ are represented just by $(2n_1,\ldots,2n_k)$ in the first line.}
\label{tab:10:QRgeneral}
\end{table}

\begin{table}
\begin{center}
\begin{tabular}{|c|ccccccccc|c|}
\hline
$C$ & $(2)$ & $(4)$ & $(2,2)$ & $(6)$ & $(2,4)$ & $(2,2,2)$ & $(8)$ & $(2,6)$ & $(4,4)$ & $\gamma_C$ \\
\hline
  \cepsfbox{qcycle-402.mps} & $-6$ &  $+2$ &  $+5$ &  $+2$ &  $+6$ &  $+2$ &  $-6$ &  $-6$ &  $-3$ &  $+1$ \\
  \cepsfbox{qcycle-405.mps} & $-6$ &  $+2$ &  $+9$ &  $+2$ &  $-2$ &  $-6$ &  $-6$ &  $+2$ &  $+1$ &  $+1$ \\
  \cepsfbox{qcycle-407.mps} & $-2$ &  $+2$ &  $-7$ &  $+2$ &  $+2$ &  $+2$ &  $-2$ &  $-2$ &  $+1$ &  $+1$ \\
  \cepsfbox{qcycle-410.mps} & $-2$ &  $+2$ &  $-3$ &  $+2$ &  $-6$ &  $+2$ &  $-2$ &  $-2$ &  $+1$ &  $+1$ \\
  \cepsfbox{qcycle-414.mps} & $-6$ &  $+2$ &  $+9$ &  $+2$ &  $-10$ &  $+2$ &  $-6$ &  $+2$ &  $+5$ &  $+1$ \\
  \cepsfbox{qcycle-416.mps} & $-2$ &  $-2$ &  $+1$ &  $-2$ &  $+6$ &  $-2$ &  $-2$ &  $+2$ &  $-3$ &  $+1$ \\
  \cepsfbox{qcycle-419.mps} & $+2$ &  $+2$ &  $-7$ &  $-2$ &  $-2$ &  $-2$ &  $-2$ &  $-2$ &  $+1$ &  $-1$ \\
  \cepsfbox{qcycle-420.mps} & $+2$ &  $-2$ &  $-3$ &  $+2$ &  $-6$ &  $-6$ &  $-2$ &  $+2$ &  $+1$ &  $-1$ \\
  \cepsfbox{qcycle-421.mps} & $-2$ &  $-2$ &  $-3$ &  $-2$ &  $+6$ &  $+6$ &  $-2$ &  $+2$ &  $+1$ &  $+1$ \\
  \cepsfbox{qcycle-423.mps} & $-2$ &  $-2$ &  $+1$ &  $-2$ &  $+6$ &  $-2$ &  $-2$ &  $+2$ &  $+1$ &  $+1$ \\
  \cepsfbox{qcycle-427.mps} & $+2$ &  $-2$ &  $+1$ &  $+2$ &  $-6$ &  $+2$ &  $-2$ &  $+2$ &  $+1$ &  $-1$ \\
  \cepsfbox{qcycle-428.mps} & $+2$ &  $-2$ &  $+1$ &  $+2$ &  $-6$ &  $+2$ &  $-2$ &  $+2$ &  $-3$ &  $-1$ \\
  \cepsfbox{qcycle-429.mps} & $+2$ &  $+2$ &  $-3$ &  $-2$ &  $+6$ &  $-2$ &  $-2$ &  $-2$ &  $+1$ &  $-1$ \\
  \cepsfbox{qcycle-430.mps} & $+6$ &  $+2$ &  $+5$ &  $-2$ &  $-6$ &  $-2$ &  $-6$ &  $-6$ &  $-3$ &  $-1$ \\
  \cepsfbox{qcycle-435.mps} & $-2$ &  $+2$ &  $+1$ &  $+2$ &  $-6$ &  $+2$ &  $-2$ &  $+6$ &  $-3$ &  $+1$ \\
  \cepsfbox{qcycle-437.mps} & $+2$ &  $+2$ &  $+1$ &  $-2$ &  $+6$ &  $-2$ &  $-2$ &  $+6$ &  $-3$ &  $-1$ \\
  \cepsfbox{qcycle-438.mps} & $+6$ &  $+2$ &  $+9$ &  $-2$ &  $+2$ &  $+6$ &  $-6$ &  $+2$ &  $+1$ &  $-1$ \\
  \cepsfbox{qcycle-440.mps} & $+6$ &  $+2$ &  $+9$ &  $-2$ &  $+10$ &  $-2$ &  $-6$ &  $+2$ &  $+5$ &  $-1$ \\[5pt]
\hline
\end{tabular}
\end{center}
\caption{Orientations of a cycle of length ten that are not quasirandom-forcing by Theorem~\ref{thm:neg}.
	 For brevity, the coefficients $\alpha_C(2n_1,\ldots,2n_k)$ are represented just by $(2n_1,\ldots,2n_k)$ in the first line.}
\label{tab:10:neg}
\end{table}

Applying Theorem~\ref{thm:negsum} requires choosing an antisymmetric kernel $U$.
Two of the kernels that the theorem is applied with, the kernels $U_2$ and $U_T$,
have already been defined in Section~\ref{sec:negative} (also see Figure~\ref{fig:U23}), and
the values of $t(Q_{2n},U)$ and $t(D_{2n},U)$ for $U=U_2$ and $U=U_T$
can be found at the end of Section~\ref{sec:negative}.
In particular,
it holds that $t(D_{10},U_2)=\frac{1}{2^{20}}$ and $t(D_{10},U_T)=\frac{31}{4 \cdot 9!}$.

We next define the remaining kernel that appears in Table~\ref{tab:10:negsum}.
Let $U_4$ be the antisymmetric kernel, visualized in Figure~\ref{fig:U4}, defined as follows:
\[U_4(x,y)=\begin{cases}
           1/2 & \mbox{if $(x,y)\in (0,1/4)\times (1/2,1)$,}\\
           1/2 & \mbox{if $(x,y)\in (1/4,1/2)\times (0,1/4)$,}\\
           1/2 & \mbox{if $(x,y)\in (1/2,1)\times (1/4,1/2)$,}\\
           -1/2 & \mbox{if $(x,y)\in (0,1/4)\times (1/4,1/2)$,}\\
           -1/2 & \mbox{if $(x,y)\in (1/4,1/2)\times (1/2,1)$,}\\
           -1/2 & \mbox{if $(x,y)\in (1/2,1)\times (0,1/4)$, and}\\
	   0 & \mbox{otherwise.}
           \end{cases}\]

\begin{table}
\renewcommand{\arraystretch}{1.2}
\begin{center}
\tabcolsep=0.12cm
\begin{tabular}{|c|ccccccccc|c|cc|}
\hline
$C$ & $(2)$ & $(4)$ & $(2,2)$ & $(6)$ & $(2,4)$ & $(2,2,2)$ & $(8)$ & $(2,6)$ & $(4,4)$ & $\gamma_C$ & kernel & sum \\
\hline
\multirow{2}{*}{\cepsfbox{qcycle-411.mps}} & \multirow{2}{*}{$+2$} & \multirow{2}{*}{$+2$} & \multirow{2}{*}{$-11$} & \multirow{2}{*}{$+2$} & \multirow{2}{*}{$-10$} & \multirow{2}{*}{$-6$} & \multirow{2}{*}{$+2$} & \multirow{2}{*}{$+2$} & \multirow{2}{*}{$+1$} & \multirow{2}{*}{$+1$} & $U_2$ & $-\frac{133}{2^{19}}$ \\
 & & & & & & & & & & & $U_T$ & $-\frac{1147}{1575\cdot 2^{10}}$ \\
  \cepsfbox{qcycle-417.mps} & $+2$ &  $-2$ &  $-3$ &  $-2$ &  $-6$ &  $-2$ &  $+2$ &  $-2$ &  $-3$ &  $+1$ & $U_T$ & $-\frac{71}{175\cdot 2^{10}}$ \\
  \cepsfbox{qcycle-424.mps} & $+2$ &  $-6$ &  $+1$ &  $-6$ &  $-2$ &  $+2$ &  $+2$ &  $+2$ &  $+1$ &  $+1$ & $U_T$ & $-\frac{163}{675\cdot 2^{10}}$ \\
  \cepsfbox{qcycle-433.mps} & $+2$ &  $-6$ &  $+1$ &  $-6$ &  $+6$ &  $-6$ &  $+2$ &  $+2$ &  $+5$ &  $+1$ & $U_T$ & $-\frac{1}{9\cdot 2^{10}}$ \\
  \cepsfbox{qcycle-426.mps} & $+2$ &  $-6$ &  $+5$ &  $-6$ &  $-2$ &  $+2$ &  $+2$ &  $-6$ &  $+1$ &  $+1$ & $U_4$ & $-\frac{1405}{2^{29}}$ \\[5pt]
\hline
\end{tabular}
\end{center}
\caption{Orientations of the cycle of length ten that are not quasirandom-forcing by Theorem~\ref{thm:negsum};
         the choice of the kernel $U$ (we give two possible choices for the first orientation) and
	 the value of the corresponding sum from the statement of the theorem are given in the last column.
         For brevity, the coefficients $\alpha_C(2n_1,\ldots,2n_k)$ are represented just by $(2n_1,\ldots,2n_k)$ in the first line.}
\label{tab:10:negsum}
\end{table}

\begin{figure}
\begin{center}
\epsfbox{qcycle-3.mps}
\end{center}
\caption{The kernel $U_4$.}
\label{fig:U4}
\end{figure}

We note that the densities of $Q_2$, $Q_4$, $Q_6$, $Q_8$ and $D_{10}$ in the kernel $U_4$
are as follows:
\[t(Q_2,U_4)=\frac{1}{2^{7}}\mbox{, }
  t(Q_4,U_4)=\frac{5}{2^{13}}\mbox{, }
  t(Q_6,U_4)=\frac{5^2}{2^{19}}\mbox{, }
  t(Q_8,U_4)=\frac{5^3}{2^{25}}\mbox{, and }
  t(D_{10},U_4)=\frac{5^5}{2^{29}}\mbox{.}\]
  
We are now left with six orientations of the cycle of length ten to analyze.
Two of them are quasirandom-forcing by Theorems~\ref{thm:QR22general} and~\ref{thm:QR4general}, respectively;
these two orientations are listed in Table~\ref{tab:10:QRxgeneral}.
The rest of the section is devoted to showing that each of the remaining $4$ orientations is also quasirandom-forcing.
These $4$ orientations can be found in Table~\ref{tab:10},
where we also list the theorems that imply that they are quasirandom-forcing.

We start with the cyclic orientation of a cycle of length ten.
While the results proven in~\cite{GrzKLV23} imply that
this orientation is quasirandom-forcing,
we include a short proof based on the methods used in this paper for completeness.

\begin{table}
\begin{center}
\begin{tabular}{|c|ccccccccc|c|c|}
\hline
$C$ & $(2)$ & $(4)$ & $(2,2)$ & $(6)$ & $(2,4)$ & $(2,2,2)$ & $(8)$ & $(2,6)$ & $(4,4)$ & $\gamma_C$ & sums \\
\hline
  \multirow{2}{*}{\cepsfbox{qcycle-404.mps}} & \multirow{2}{*}{$-2$} &  \multirow{2}{*}{$+2$} &  \multirow{2}{*}{$-11$} &  \multirow{2}{*}{$-2$} &  \multirow{2}{*}{$+10$} &  \multirow{2}{*}{$+6$} &  \multirow{2}{*}{$+2$} &  \multirow{2}{*}{$+2$} &  \multirow{2}{*}{$+1$} &  \multirow{2}{*}{$-1$} & $-0.00413$ \\
  & & & & & & & & & & & $-0.06960$ \\
  \multirow{2}{*}{\cepsfbox{qcycle-425.mps}} &  \multirow{2}{*}{$-2$}  &  \multirow{2}{*}{$-6$}  &  \multirow{2}{*}{$+5$}  &  \multirow{2}{*}{$+6$}  &  \multirow{2}{*}{$+2$}  &  \multirow{2}{*}{$-2$}  &  \multirow{2}{*}{$+2$}  &  \multirow{2}{*}{$-6$}  &  \multirow{2}{*}{$+1$}  &  \multirow{2}{*}{$-1$}  & $-0.00130$ \\
  & & & & & & & & & & & $-0.03809$ \\
\hline
\end{tabular}
\end{center}
\caption{Orientations of a cycle of length ten that are quasirandom-forcing by Theorems~\ref{thm:QR22general} and \ref{thm:QR4general}, respectively;
         the approximates value of the two sums from the statement of the theorems are given in the last column.
	 For brevity, the coefficients $\alpha_C(2n_1,\ldots,2n_k)$ are represented just by $(2n_1,\ldots,2n_k)$ in the first line.}
\label{tab:10:QRxgeneral}
\end{table}

\begin{table}[ht]
\begin{center}
\tabcolsep=0.12cm
\begin{tabular}{|c|ccccccccc|c|l|}
\hline
$C$ & $(2)$ & $(4)$ & $(2,2)$ & $(6)$ & $(2,4)$ & $(2,2,2)$ & $(8)$ & $(2,6)$ & $(4,4)$ & $\gamma_C$ & \\
\hline
  \cepsfbox{qcycle-401.mps} & $-10$ &  $+10$ &  $+25$ &  $-10$ &  $-30$ &  $-10$ &  $+10$ &  $+10$ &  $+5$ &  $-1$ & by Theorem~\ref{thm:10:401} \\
  \cepsfbox{qcycle-415.mps} & $-2$ &  $-6$ &  $+1$ &  $+6$ &  $-6$ &  $+6$ &  $+2$ &  $+2$ &  $+5$ &  $-1$ & by Theorem~\ref{thm:10:415} \\
  \cepsfbox{qcycle-422.mps} & $-2$ &  $+6$ &  $-3$ &  $-6$ &  $-2$ &  $+2$ &  $+2$ &  $-2$ &  $+1$ &  $-1$ & by Theorem~\ref{thm:10:422} \\
  \cepsfbox{qcycle-431.mps} & $+2$ &  $-2$ &  $-3$ &  $-2$ &  $-6$ &  $+6$ &  $+2$ &  $+6$ &  $+1$ &  $+1$ & by Theorem~\ref{thm:10:431} \\[5pt]
\hline
\end{tabular}
\end{center}
\caption{Orientations of the cycle of length ten that are quasirandom-forcing
         but this is not implied by Theorem~\ref{thm:QRgeneral}.
	 The last column contains references to theorems
	 where we establish that the orientations are quasirandom-forcing.
	 For brevity, the coefficients $\alpha_C(2n_1,\ldots,2n_k)$ are represented just by $(2n_1,\ldots,2n_k)$ in the first line.}
\label{tab:10}
\end{table}

\begin{theorem}
\label{thm:10:401}
The orientation \epsfxsize=2.5ex \epsfbox{qcycle-401.mps} of a cycle of length $10$,
i.e., the cyclic orientation of a cycle of length $10$,
is quasirandom-forcing.
\end{theorem}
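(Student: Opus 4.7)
The plan is to use the operator-trace representation. Since $\vec{C}_{10}$ is directed cyclically, for any tournamenton $W=1/2+U$ we have $t(\vec{C}_{10},W)=\mathrm{tr}(W^{10})$, where $W$ is viewed as the integral operator on $L_2[0,1]$ and the constant-$1/2$ kernel acts as the rank-one operator $P\colon f\mapsto \tfrac{1}{2}\int_0^1 f$. The theorem therefore reduces to showing $\mathrm{tr}((P+U)^{10}) \leq \mathrm{tr}(P^{10}) = 1/2^{10}$, with equality only when $U\equiv 0$.

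First I would apply the identity \eqref{eq:even} with the coefficients $\alpha_C(\cdot)$ and $\gamma_C=-1$ read off from Table~\ref{tab:10}; the resulting expression is a polynomial in the quantities $q_j := t(Q_{2j},U)$ for $j=1,\ldots,4$ and $d_5:=t(D_{10},U)$. Invoking the spectral decomposition of $-U^2$ as in Lemmas~\ref{lm:polynomial} and~\ref{lm:cycle}, I write its positive eigenvalues as $\lambda_i\in(0,1/\pi^2]$ (the upper bound being Proposition~\ref{prop:radius}), its orthonormal eigenfunctions as $f_i$, and set $\sigma_i:=\langle\mathbf{1},f_i\rangle$; then $q_j=\sum_i\lambda_i^j\sigma_i^2$, $d_5=\sum_i\lambda_i^5$, and $\sum_i\sigma_i^2\le 1$.

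Grouping the polynomial $1/2^{10}-t(\vec{C}_{10},1/2+U)$ by total degree in the variables $\sigma_i$, three of the four resulting pieces are under direct control. The degree-zero piece equals $d_5=\sum_i\lambda_i^5\ge 0$, with equality if and only if $U\equiv 0$ by Lemma~\ref{lm:cycle}. The degree-two piece equals $\tfrac{5}{128}\sum_i\sigma_i^2\lambda_i\,g(\lambda_i)$ for $g(x)=1-4x+16x^2-64x^3=4(1/4-x)(16x^2+1)$, which is non-negative on $[0,1/4]\supseteq[0,1/\pi^2]$. The degree-six piece equals $\tfrac{5}{8}q_1^3\ge 0$. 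The main obstacle is the degree-four piece
\[ -\frac{5}{64}\bigl(5q_1^2-24q_1q_2+32q_1q_3+16q_2^2\bigr),\]
which after symmetrisation has the form $\sum_{i,j}\sigma_i^2\sigma_j^2\lambda_i\lambda_j\,F(\lambda_i,\lambda_j)$ for an explicit quadratic $F$ that is negative on the admissible square $[0,1/\pi^2]^2$, so this piece contributes a non-positive quantity that must be absorbed into the other three.

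To carry out the absorption, I would bound $|F(\lambda_i,\lambda_j)|$ on $[0,1/\pi^2]^2$, use the strengthened bound $q_1\le 1/12$ from Lemma~\ref{lm:Q2}, and invoke the Cauchy--Schwarz inequality $q_j^2\le q_{j-1}q_{j+1}$ (applied to $(\sqrt{\lambda_i^{j-1}}\sigma_i,\sqrt{\lambda_i^{j+1}}\sigma_i)$) together with $q_{j+1}\le q_j/\pi^2$ from Lemma~\ref{lm:positive}, so as to control the higher-order terms in the degree-four piece. The delicate point is to employ these constraints jointly rather than individually, since the naive bound $|\text{deg 4}|\le 100\,q_1^2$ alone is insufficient: one should use the favourable sign of the $-24q_1q_2$ summand and the relation $q_2\le q_1/\pi^2$ to save enough. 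Once the inequality $1/2^{10}\ge t(\vec{C}_{10},1/2+U)$ is established, equality forces the degree-zero term to vanish, hence all $\lambda_i=0$, so $U\equiv 0$ by Lemma~\ref{lm:cycle}, proving that $\vec{C}_{10}$ is quasirandom-forcing.
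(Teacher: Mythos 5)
Your decomposition into degree pieces is correct and the sign analysis of the degree-$0$, degree-$2$, and degree-$6$ contributions is right (and the factorisation $g(x)=(1-4x)(1+16x^2)$ is a nice way to see the degree-$2$ piece is non-negative on $[0,1/4]$). However, the proof stops precisely at the point where the real work begins: you identify that the degree-$4$ piece $-\tfrac{5}{64}(5q_1^2-24q_1q_2+32q_1q_3+16q_2^2)$ is non-positive and must be absorbed, note that the naive bound in terms of $q_1^2$ is insufficient, and then describe in general terms the tools you would use (Lemma~\ref{lm:Q2}, the bound $q_{j+1}\le q_j/\pi^2$, a Cauchy--Schwarz relation $q_j^2\le q_{j-1}q_{j+1}$) without actually producing the inequality. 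This is not a routine step that can be waved away — the margins are genuinely tight, and it is exactly what makes this orientation one of the four not already covered by Theorems~\ref{thm:QRgeneral}--\ref{thm:QRgeneral4}. A completed argument must produce an explicit chain of inequalities showing the degree-$2$ and degree-$6$ pieces dominate, and you have not shown (or even convincingly argued) that your chosen set of constraints suffices when used jointly.

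For comparison, the paper's proof does not group by degree in $\sigma$ at all. It splits the nine $\alpha_C$-terms into three chunks chosen so that each can be estimated cleanly by pushing everything down to $t(Q_2,U)$ via Lemmas~\ref{lm:positive} and~\ref{lm:Q2}: first the $\alpha_C(6),\alpha_C(8),\alpha_C(2,6)$ terms (bounded above by $0$ using $t(Q_8,U)\le t(Q_6,U)/\pi^2$ and $t(Q_2,U)\le 1/12$), then the $\alpha_C(4),\alpha_C(2,4),\alpha_C(4,4)$ terms (bounded by a quadratic in $t(Q_2,U)$), and finally the remainder, leading to a quadratic in $t(Q_2,U)$ whose sign is controlled by $\pi^2/(10\pi^2-8) > 1/12$. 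This grouping deliberately cuts across degrees, which avoids ever having to analyse the full bilinear form $\sum_{i,j}\sigma_i^2\sigma_j^2\lambda_i\lambda_j F(\lambda_i,\lambda_j)$ that your approach leaves you facing. Your route is not obviously wrong, but it commits you to a harder estimate than the one the paper solves, and you have not supplied it. Also, the opening trace identity $t(\vec{C}_{10},W)=\mathrm{tr}(W^{10})$ is correct but is not actually used once you switch to \eqref{eq:even}, so it can be dropped.
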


\begin{proof}
Let $C$ be the cyclic orientation of a cycle of length $10$.
The values of $\alpha_C(2n_1,\ldots,2n_k)$ for feasible choices of $n_1,\ldots,n_k$ can be found in Table~\ref{tab:10}.
Consider an antisymmetric kernel $U:[0,1]^2\to [-1/2,1/2]$.
We will show $t(C,1/2+U)\le 2^{-10}$ and the equality holds if and only if $U\equiv 0$.

We analyze the expression \eqref{eq:even} and start by showing that
the sum of the three terms with coefficients $\alpha_C(6)$, $\alpha_C(8)$ and $\alpha_C(2,6)$ is non-negative.
Recall that $t(Q_2,U)\le 1/12$ by Lemma~\ref{lm:Q2}.
Using Lemma~\ref{lm:positive} we obtain the following estimate:
\begin{align*}
& \frac{\alpha_C(6)t(Q_6,U)}{16}+\frac{\alpha_C(8)t(Q_8,U)}{4}+\frac{\alpha_C(2,6)t(Q_2,U)t(Q_6,U)}{4} \\
=\; & \frac{-10t(Q_6,U)}{16}+\frac{10t(Q_8,U)}{4}+\frac{10t(Q_2,U)t(Q_6,U)}{4} \\
\le\; & \frac{-5t(Q_6,U)}{8}+\frac{5t(Q_6,U)}{2\pi^2}+\frac{5t(Q_6,U)}{24} \\
=\; & \left(-\frac{1}{4}+\frac{1}{\pi^2}+\frac{1}{12}\right)\frac{5t(Q_6,U)}{2}\le 0.
\end{align*}
Using Lemmas~\ref{lm:positive} and~\ref{lm:Q2},
we obtain the following estimate on the sum of the three terms with coefficients $\alpha_C(4)$, $\alpha_C(2,4)$ and $\alpha_C(2,8)$:
\begin{align*}
& \frac{\alpha_C(4)t(Q_4,U)}{64}+\frac{\alpha_C(2,4)t(Q_2,U)t(Q_4,U)}{16}+\frac{\alpha_C(4,4)t(Q_4,U)^2}{4} \\
=\; & \frac{10t(Q_4,U)}{64}-\frac{30t(Q_2,U)t(Q_4,U)}{16}+\frac{5t(Q_4,U)^2}{4} \\
=\; & \left(\frac{1}{2}-6t(Q_2,U)\right)\frac{5t(Q_4,U)}{16}+\frac{5t(Q_4,U)^2}{4} \\
\le\; & \left(\frac{1}{2}-6t(Q_2,U)\right)\frac{5t(Q_2,U)}{16\pi^2}+\frac{5t(Q_2,U)^2}{4\pi^4} \\
=\; & \frac{5t(Q_2,U)}{32\pi^2}-\frac{5(6\pi^2-4)t(Q_2,U)^2}{16\pi^4}.
\end{align*}
The identity \eqref{eq:even} and the just established two estimates yield that
$t(C,1/2+U)$ is at most
\begin{align*}
& \frac{1}{1024}+\frac{\alpha_C(2)t(Q_2,U)}{256}+\frac{\alpha_C(2,2)t(Q_2,U)^2}{64}+\frac{\alpha_C(2,2,2)t(Q_2,U)^3}{16}\\
& + \frac{5t(Q_2,U)}{32\pi^2}-\frac{5(6\pi^2-4)t(Q_2,U)^2}{16\pi^4}
  + \gamma_C t(D_{10},U) \\
=\; & \frac{1}{1024}-\frac{10t(Q_2,U)}{256}+\frac{25t(Q_2,U)^2}{64}-\frac{10t(Q_2,U)^3}{16}\\
& + \frac{5t(Q_2,U)}{32\pi^2}-\frac{5(6\pi^2-4)t(Q_2,U)^2}{16\pi^4}
  - t(D_{10},U) \\
\le\; & \frac{1}{1024}-\frac{10t(Q_2,U)}{256}+\frac{25t(Q_2,U)^2}{64} 
  + \frac{5t(Q_2,U)}{32\pi^2}-\frac{5(6\pi^2-4)t(Q_2,U)^2}{16\pi^4}
  - t(D_{10},U) \\
=\; & \frac{1}{1024}-\frac{5(\pi^2-4)t(Q_2,U)}{128\pi^2}+\frac{5(5\pi^4-24\pi^2+16)t(Q_2,U)^2}{64\pi^4}
  - t(D_{10},U) \\
=\; & \frac{1}{1024}-\frac{5(5\pi^2-4)(\pi^2-4)}{64\pi^4}\left(\frac{\pi^2}{10\pi^2-8}-t(Q_2,U)\right)t(Q_2,U)
  - t(D_{10},U).
\end{align*}
Since $\frac{\pi^2}{10\pi^2-8}>\frac{1}{12}$ and $t(Q_2,U)\le \frac{1}{12}$ by Lemma~\ref{lm:Q2},
we have $t(C,1/2+U)\le 2^{-10}-t(D_{10},U)$.
Hence, Lemma~\ref{lm:cycle} yields that
$t(C,1/2+U)\le 2^{-10}$ and the equality holds if and only if $U\equiv 0$,
which implies that the orientation $C$ is quasirandom-forcing.
\end{proof}

We next deal with the second orientation that is not covered by the results presented in Section~\ref{sec:general}.

\begin{theorem}
\label{thm:10:415}
The orientation \epsfxsize=2.5ex \epsfbox{qcycle-415.mps} of a cycle of length $10$ is quasirandom-forcing.
\end{theorem}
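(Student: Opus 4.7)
The plan is to apply the identity~\eqref{eq:even} with $\gamma_C=-1$ and the coefficients $\alpha_C$ listed for this orientation in Table~\ref{tab:10}, and to show directly that
\[
 t\!\left(C,\tfrac{1}{2}+U\right) - 2^{-10} \le 0
\]
for every antisymmetric kernel $U:[0,1]^2\to[-1/2,1/2]$, with equality only when $U\equiv 0$. Since $\gamma_C=-1$, the term $-t(D_{10},U)$ is non-positive and vanishes exactly when $U\equiv 0$ by Lemma~\ref{lm:cycle}, so the displayed bound will imply that $C$ is quasirandom-forcing.

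The bookkeeping argument is in the spirit of the proof of Theorem~\ref{thm:10:401}. I will group the $\alpha_C$-terms appearing in~\eqref{eq:even} by total index $2(n_1+\cdots+n_k)$ and estimate each group using four auxiliary inequalities: $t(Q_2,U)^2\le t(Q_4,U)$ from~\eqref{eq:Q22Q4}; the Cauchy--Schwarz bound $t(Q_4,U)^2\le t(Q_2,U)\,t(Q_6,U)$, obtained from the spectral expansion $t(Q_{2k},U)=\sum_i\lambda_i^k\sigma_i^2$ used in the proof of Lemma~\ref{lm:polynomial}; Lemma~\ref{lm:positive}; and Lemma~\ref{lm:Q2}.

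Concretely, the index-$4$ pair $-3\,t(Q_4,U)/32 + t(Q_2,U)^2/64$ is bounded above by $-5\,t(Q_2,U)^2/64$ via $t(Q_4,U)\ge t(Q_2,U)^2$. The index-$6$ triple $(3/8)\bigl(t(Q_6,U) - t(Q_2,U)\,t(Q_4,U) + t(Q_2,U)^3\bigr)$ collapses to at most $(3/8)\,t(Q_6,U)$ via the same inequality multiplied by $t(Q_2,U)$. The index-$8$ triple $(1/2)\,t(Q_8,U) + (1/2)\,t(Q_2,U)\,t(Q_6,U) + (5/4)\,t(Q_4,U)^2$ is bounded by $t(Q_6,U)\bigl(1/(2\pi^2)+7/48\bigr)$ after replacing $t(Q_4,U)^2$ by $t(Q_2,U)\,t(Q_6,U)$ (Cauchy--Schwarz), $t(Q_8,U)$ by $t(Q_6,U)/\pi^2$ (Lemma~\ref{lm:positive}), and $t(Q_2,U)$ by $1/12$ (Lemma~\ref{lm:Q2}). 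Combining these estimates and applying $t(Q_6,U)\le t(Q_2,U)/\pi^4$ once more reduces the left-hand side of the display to at most
\[
 \left(\frac{25}{48\pi^4}+\frac{1}{2\pi^6}-\frac{1}{128}\right) t(Q_2,U) \;-\; \frac{5\,t(Q_2,U)^2}{64} \;-\; t(D_{10},U).
\]

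The main technical point is verifying that the coefficient of $t(Q_2,U)$ in the bracket is negative; a direct numerical estimate gives it as approximately $-0.00195$, and symbolically it amounts to the inequality $200/(3\pi^4) + 64/\pi^6 < 1$, which is comfortable since $\pi^4>97$ and $\pi^6>961$. Once this is in hand, the entire expression is manifestly non-positive, and equality forces $t(D_{10},U)=0$ and hence $U\equiv 0$ by Lemma~\ref{lm:cycle}, completing the proof.
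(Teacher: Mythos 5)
Your proof is correct, and it follows a genuinely different route from the paper's. The paper's own argument also starts from the identity~\eqref{eq:even} but groups the terms differently and stays strictly within the toolbox of Lemmas~\ref{lm:positive} and~\ref{lm:Q2}: it first shows
\[
\frac{t(Q_2,U)^2}{64}+\frac{6\,t(Q_2,U)^3}{16}\le\frac{t(Q_2,U)}{256}
\]
using only $t(Q_2,U)\le 1/12$; then it bounds the $(6)$, $(8)$, $(2,6)$ and $(4,4)$ terms all from above by $\tfrac{24+25\pi^2}{48\pi^4}\,t(Q_4,U)$ by pushing each down to a multiple of $t(Q_4,U)$ via $t(Q_{2m},U)\le t(Q_4,U)/\pi^{2(m-2)}$ and $t(Q_4,U)\le 1/(12\pi^2)$; and finally it simply drops the $(2,4)$ term because $\alpha_C(2,4)<0$ makes it non-positive. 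The residue is $-t(Q_2,U)/256-\bigl(3/32-\tfrac{24+25\pi^2}{48\pi^4}\bigr)t(Q_4,U)-t(D_{10},U)\le 0$. Your argument instead organizes terms by total index and brings in two extra inequalities: $t(Q_2,U)^2\le t(Q_4,U)$ (the paper's~\eqref{eq:Q22Q4}) to kill the index-$4$ and index-$6$ positive parts, and the log-convexity bound $t(Q_4,U)^2\le t(Q_2,U)\,t(Q_6,U)$, which you derive by Cauchy--Schwarz from the spectral expansion $t(Q_{2k},U)=\sum_i\lambda_i^k\sigma_i^2$ with $\lambda_i>0$. That last inequality is not among the paper's stated tools, and it is a nice addition to the spectral toolbox, but it makes your route somewhat heavier than needed here: the paper's grouping shows the dominant negative term $-\tfrac{3}{32}t(Q_4,U)$ alone already swamps everything else, without ever needing $t(Q_6,U)$ as an intermediate currency. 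Both proofs are correct and conclude identically via Lemma~\ref{lm:cycle}.
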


\begin{proof}
Let $C$ be the orientation of a cycle from the statement of the theorem.
The values of $\alpha_C(2n_1,\ldots,2n_k)$ for feasible choices of $n_1,\ldots,n_k$ are given in Table~\ref{tab:10}.
Consider an antisymmetric kernel $U:[0,1]^2\to [-1/2,1/2]$.
We will show $t(C,1/2+U)\le 2^{-10}$ and the equality holds if and only if $U\equiv 0$.

We again analyze the expression \eqref{eq:even}.
We first estimate the sum of the two terms with the coefficients $\alpha_C(2,2)$ and $\alpha_C(2,2,2)$
using that $t(Q_2,U)\le 1/12$, which holds by Lemma~\ref{lm:Q2}:
\begin{align*}
\frac{\alpha_C(2,2)t(Q_2,U)^2}{64}+\frac{\alpha_C(2,2,2)t(Q_2,U)^3}{16}
& = \frac{t(Q_2,U)^2}{64}+\frac{6t(Q_2,U)^3}{16} \\
& \le \left(\frac{1}{64\cdot 12}+\frac{6}{16\cdot 12^2}\right) t(Q_2,U) \\
& = \frac{t(Q_2,U)}{256}.
\end{align*}
We next estimate the sum of the four terms with the coefficients $\alpha_C(6)$, $\alpha_C(8)$,
$\alpha_C(2,6)$ and $\alpha_C(4,4)$ using Lemmas~\ref{lm:positive} and~\ref{lm:Q2}:
\begin{align*}
& \frac{\alpha_C(6)t(Q_6,U)}{16}+\frac{\alpha_C(8)t(Q_8,U)}{4}+\frac{\alpha_C(2,6)t(Q_2,U)t(Q_6,U)}{4}+\frac{\alpha_C(4,4)t(Q_4,U)^2}{4}\\
=\; & \frac{6t(Q_6,U)}{16}+\frac{2t(Q_8,U)}{4}+\frac{2t(Q_2,U)t(Q_6,U)}{4}+\frac{5t(Q_4,U)^2}{4} \\
\le\; & \left(\frac{6}{16\pi^2}+\frac{2}{4\pi^4}+\frac{2}{4\cdot 12\pi^2}+\frac{5}{4\cdot 12\pi^2}\right)t(Q_4,U)
=   \frac{24+25\pi^2}{48\pi^4} t(Q_4,U).
\end{align*}
Hence, the identity \eqref{eq:even} implies that $t(C,1/2+U)$ is at most
\begin{align*}
& \frac{1}{1024} + \frac{\alpha_C(2)t(Q_2,U)}{256} + \frac{t(Q_2,U)}{256} + \frac{\alpha_C(4)t(Q_4,U)}{64} + \frac{(24+25\pi^2)t(Q_4,U)}{48\pi^4} + \gamma_C t(D_{10},U) \\
& = \frac{1}{1024} - \frac{2t(Q_2,U)}{256} + \frac{t(Q_2,U)}{256} - \frac{6t(Q_4,U)}{64} + \frac{(24+25\pi^2)t(Q_4,U)}{48\pi^4} - t(D_{10},U) \\
& \le \frac{1}{1024} - t(D_{10},U).
\end{align*}
Using Lemma~\ref{lm:cycle},
we conclude that $t(C,1/2+U)\le 2^{-10}$ and the equality holds if and only if $U\equiv 0$,
which yields that $C$ is quasirandom-forcing.
\end{proof}

We now deal with the third orientation not covered by the results from Section~\ref{sec:general}.

\begin{theorem}
\label{thm:10:422}
The orientation \epsfxsize=2.5ex \epsfbox{qcycle-422.mps} of a cycle of length $10$ is quasirandom-forcing.
\end{theorem}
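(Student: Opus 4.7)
The plan is to show that $t(C,1/2+U)\le 2^{-10}$ for every antisymmetric kernel $U\colon[0,1]^2\to[-1/2,1/2]$, with equality only when $U\equiv 0$. Writing $q_k=t(Q_{2k},U)$ and $d=t(D_{10},U)$, I would first use identity~\eqref{eq:even} together with the values $\alpha_C$ and $\gamma_C=-1$ listed for this orientation in Table~\ref{tab:10} to obtain
\[
t\!\left(C,\tfrac12+U\right)-\tfrac{1}{1024}
= -\tfrac{q_1}{128}+\tfrac{3q_2}{32}-\tfrac{3q_1^2}{64}-\tfrac{3q_3}{8}-\tfrac{q_1q_2}{8}+\tfrac{q_1^3}{8}+\tfrac{q_4}{2}-\tfrac{q_1q_3}{2}+\tfrac{q_2^2}{4}-d,
\]
so the task reduces to bounding the right-hand side by zero, with equality only when $U\equiv 0$.

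The hard part is the positive term $3q_2/32$: the simple estimate $q_2\le q_1/\pi^2$ produces a positive coefficient of $q_1$, which is precisely why Theorems~\ref{thm:QRgeneral}--\ref{thm:QRgeneral4} all fail for this orientation. My plan is to split $3q_2/32$ simultaneously against both $-q_1/128$ and $-3q_3/8$. For this I will invoke the log-convexity $q_k^2\le q_{k-1}q_{k+1}$, which follows either from the spectral formula $q_k=\sum_i\lambda_i^k\sigma_i^2$ introduced in the proof of Lemma~\ref{lm:polynomial} by Cauchy-Schwarz applied to $\sum_i(\lambda_i^{(k-1)/2}\sigma_i)(\lambda_i^{(k+1)/2}\sigma_i)$, or directly from $q_k=\|U^kh_1\|_2^2=-\langle U^{k-1}h_1,U^{k+1}h_1\rangle$. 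In particular $q_2\le\sqrt{q_1q_3}$, so the weighted AM-GM inequality $\sqrt{q_1q_3}\le tq_1+q_3/(4t)$ with $t=1/16$ yields
\[
\tfrac{3q_2}{32}-\tfrac{q_1}{128}-\tfrac{3q_3}{8}\le\tfrac{3q_1}{512}+\tfrac{3q_3}{8}-\tfrac{q_1}{128}-\tfrac{3q_3}{8}=-\tfrac{q_1}{512}.
\]

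The remaining terms are then routine. The bound $q_1\le 1/12$ from Lemma~\ref{lm:Q2} gives $q_1^3/8\le q_1^2/96$, so $-3q_1^2/64+q_1^3/8\le-7q_1^2/192$. The Cauchy-Schwarz inequality $q_2^2\le q_1q_3$ produces $q_2^2/4-q_1q_3/2\le-q_1q_3/4$. Finally, Lemma~\ref{lm:positive} with $k=1$, $m=4$ gives $q_4\le q_1/\pi^6$, so $q_4/2\le q_1/(2\pi^6)$. Assembling everything,
\[
t\!\left(C,\tfrac12+U\right)-\tfrac{1}{1024}\le\left(-\tfrac{1}{512}+\tfrac{1}{2\pi^6}\right)q_1-\tfrac{7q_1^2}{192}-\tfrac{q_1q_2}{8}-\tfrac{q_1q_3}{4}-d.
\]
Since $\pi^6>256$, the coefficient of $q_1$ is strictly negative, and every other displayed term is non-positive. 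Hence the right-hand side is at most $-d\le 0$, and equality forces $d=0$, which by Lemma~\ref{lm:cycle} implies $U\equiv 0$, completing the proof that $C$ is quasirandom-forcing.
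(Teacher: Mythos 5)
Your proof is correct, and it takes a genuinely different route from the paper for the crucial step: handling the positive contribution $\frac{3q_2}{32}$ from $\alpha_C(4)=+6$. The paper groups the four coefficients $\alpha_C(2),\alpha_C(4),\alpha_C(6),\alpha_C(8)$ into a single polynomial in a spectral variable, observes the algebraic identity
\[
\frac{x}{128}-\frac{3x^2}{32}+\frac{3x^3}{8}-\frac{x^4}{2}=\frac{x}{2}\left(\frac14-x\right)^3\ge 0\quad\text{for }x\in[0,1/\pi^2],
\]
and invokes Lemma~\ref{lm:polynomial} to conclude $\frac{q_1}{128}-\frac{3q_2}{32}+\frac{3q_3}{8}-\frac{q_4}{2}\ge0$. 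You instead isolate the log-convexity $q_k^2\le q_{k-1}q_{k+1}$ (a Cauchy--Schwarz consequence of the same spectral decomposition, not explicitly stated in the paper) and, via weighted AM--GM, split $q_2$ between the negative $q_1$ and $q_3$ terms, then bound $q_4$ separately through Lemma~\ref{lm:positive}. Your remaining groupings also differ slightly: the paper pairs $\alpha_C(2,4)$ with $\alpha_C(4,4)$ via $q_2\le q_1/\pi^2$ and simply drops the negative $\alpha_C(2,6)$ term, whereas you pair $\alpha_C(4,4)$ with $\alpha_C(2,6)$ via $q_2^2\le q_1q_3$ and keep $\alpha_C(2,4)$ as a free negative term. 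Both derivations are spectral at heart, but the paper's Lemma~\ref{lm:polynomial} route (exact polynomial factorization) is tailored to produce the sharp threshold $1/4$, while your log-convexity-plus-AM--GM route is more modular and more transparent about where each inequality is applied; it would be worth recording $q_k^2\le q_{k-1}q_{k+1}$ as a standalone lemma, since it is a clean and reusable tool. One minor point: the final bound, and the equality analysis, are cleanest if you note explicitly the chain $t(C,\tfrac12+U)-2^{-10}\le(\text{nonpositive terms})-d\le -d\le 0$, so that equality forces $d=0$ and hence $U\equiv0$ by Lemma~\ref{lm:cycle}; you do essentially say this, but the intermediate inequality is worth displaying.
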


\begin{proof}
Let $C$ be the orientation of a cycle from the statement of the theorem.
The values of $\alpha_C(2n_1,\ldots,2n_k)$ for feasible choices of $n_1,\ldots,n_k$ are given in Table~\ref{tab:10}.
Consider an antisymmetric kernel $U:[0,1]^2\to [-1/2,1/2]$.
We will show $t(C,1/2+U)\le 2^{-10}$ and the equality holds if and only if $U\equiv 0$.

Again, as in the proofs of Theorems~\ref{thm:10:401} and~\ref{thm:10:415},
we analyze the expression \eqref{eq:even}.
Using $t(Q_2,U)\le 1/12$, which holds by Lemma~\ref{lm:Q2},
we show that the sum of the two terms with the coefficients $\alpha_C(2,2)$ and $\alpha_C(2,2,2)$ is non-positive:
\begin{align*}
\frac{\alpha_C(2,2)t(Q_2,U)^2}{64}+\frac{\alpha_C(2,2,2)t(Q_2,U)^3}{16}
& = \frac{-3t(Q_2,U)^2}{64}+\frac{2t(Q_2,U)^3}{16} \\
& \le \frac{-3t(Q_2,U)^2}{64}+\frac{t(Q_2,U)^2}{96} \le 0.
\end{align*}
Using Lemmas~\ref{lm:positive} and~\ref{lm:Q2},
we show that the sum of the two terms with the coefficients $\alpha_C(2,4)$ and $\alpha_C(4,4)$ is also non-positive:
\begin{align*}
\frac{\alpha_C(2,4)t(Q_2,U)t(Q_4,U)}{16}+\frac{\alpha_C(4,4)t(Q_4,U)^2}{4}
& = \frac{-2t(Q_2,U)t(Q_4,U)}{16}+\frac{t(Q_4,U)^2}{4} \\
& \le \frac{-t(Q_2,U)t(Q_4,U)}{8}+\frac{t(Q_2,U)t(Q_4,U)}{4\pi^2} \\
& \le 0.
\end{align*}
Since $\alpha_C(2,6)<0$, it follows from \eqref{eq:even} that $t(C,1/2+U)$ is at most
\begin{align*}
&\frac{1}{1024}
    +\frac{\alpha_C(2)t(Q_2,U)}{256}
    +\frac{\alpha_C(4)t(Q_4,U)}{64}
    +\frac{\alpha_C(6)t(Q_6,U)}{16}
    +\frac{\alpha_C(8)t(Q_8,U)}{4}
	+\gamma_C t(D_{10},U) \\
& = \frac{1}{1024}
    -\frac{2t(Q_2,U)}{256}
    +\frac{6t(Q_4,U)}{64}
    -\frac{6t(Q_6,U)}{16}
    +\frac{2t(Q_8,U)}{4}
    -t(D_{10},U) \\
& = \frac{1}{1024}
    -\frac{t(Q_2,U)}{128}
    +\frac{3t(Q_4,U)}{32}
    -\frac{3t(Q_6,U)}{8}
    +\frac{t(Q_8,U)}{2}
    -t(D_{10},U). 
\end{align*}    
Note that
\[ 
\frac{x}{128}-\frac{3x^2}{32}+\frac{3x^3}{8}-\frac{x^4}{2}=\frac{x}{2}\left(\frac{1}{4}-x\right)^3
\]
is non-negative for every $x\in [0,1/\pi^2]$.
Hence, Lemma~\ref{lm:polynomial} yields that
\[\frac{t(Q_2,U)}{128}-\frac{3t(Q_4,U)}{32}+\frac{3t(Q_6,U)}{8}-\frac{t(Q_8,U)}{2}\ge 0,\]
which implies that
\[t(C,1/2+U) \le \frac{1}{1024}-t(D_{10},U).\]
It now follows from Lemma~\ref{lm:cycle} that
$t(C,1/2+U)\le 2^{-10}$ and the equality holds if and only if $U\equiv 0$,
which yields that $C$ is quasirandom-forcing.
\end{proof}

The last remaining orientation not covered by the results from Section~\ref{sec:general}
is dealt with in the next theorem.

\begin{theorem}
\label{thm:10:431}
The orientation \epsfxsize=2.5ex \epsfbox{qcycle-431.mps} of a cycle of length $10$ is quasirandom-forcing.
\end{theorem}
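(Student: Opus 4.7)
The plan is to follow the template of Theorems~\ref{thm:10:401}, \ref{thm:10:415} and~\ref{thm:10:422}: we will show that for every antisymmetric kernel $U:[0,1]^2\to[-1/2,1/2]$,
\[
t\!\left(C,\tfrac12+U\right)\;\ge\;\tfrac{1}{1024}+t(D_{10},U),
\]
and then conclude via Lemma~\ref{lm:cycle}. Writing $a=t(Q_2,U)$, $b=t(Q_4,U)$, $c=t(Q_6,U)$, $d=t(Q_8,U)$ and substituting the values from Table~\ref{tab:10} (with $\gamma_C=+1$) into~\eqref{eq:even}, this reduces to showing
\[
S\;:=\;\tfrac{a}{128}-\tfrac{b}{32}-\tfrac{c}{8}+\tfrac{d}{2}-\tfrac{3a^2}{64}-\tfrac{3ab}{8}+\tfrac{3ac}{2}+\tfrac{3a^3}{8}+\tfrac{b^2}{4}\;\ge\;0.
\]

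The crucial step is to group the four pure single-$Q$ terms together with the mixed terms $-\tfrac{3ab}{8}$ and $\tfrac{3ac}{2}$: in the spectral decomposition of $-U^2$ used in the proofs of Lemmas~\ref{lm:polynomial} and~\ref{lm:cycle} (with eigenvalues $\lambda_i\in(0,1/\pi^2]$ and $\sigma_i=\langle f_i,h_1\rangle$), their sum equals $\sum_i\sigma_i^2\,P_a(\lambda_i)$, where
\[
P_a(x)\;=\;\frac{x(1-4x)(1-16x^2-48ax)}{128}.
\]
Lemma~\ref{lm:Q2} gives $a\le 1/12$, and for any $a\in[0,1/12]$ the two positive roots of $P_a$, namely $1/4$ and $\tfrac{-6a+\sqrt{36a^2+1}}{4}\ge\tfrac{\sqrt5-1}{8}$, both strictly exceed $1/\pi^2$, so $P_a\ge 0$ on $[0,1/\pi^2]$. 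A short derivative check shows moreover that $P_a(x)/x$ is strictly decreasing on $(0,1/\pi^2]$, which strengthens the bound to
\[
\sum_i\sigma_i^2\,P_a(\lambda_i)\;\ge\;a\cdot\frac{(1-4/\pi^2)(1-16/\pi^4-48a/\pi^2)}{128}.
\]

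The remaining three terms $-\tfrac{3a^2}{64}+\tfrac{3a^3}{8}+\tfrac{b^2}{4}$ are handled via the Cauchy--Schwarz inequality $b\ge a^2$ derived in the proof of Theorem~\ref{thm:QRgeneral4}, which yields $\tfrac{b^2}{4}\ge\tfrac{a^4}{4}$. Combining everything reduces the claim to the one-variable inequality
\[
h(a)\;:=\;a\cdot\frac{(1-4/\pi^2)(1-16/\pi^4-48a/\pi^2)}{128}-\tfrac{3a^2}{64}+\tfrac{3a^3}{8}+\tfrac{a^4}{4}\;\ge\;0\quad\text{for every }a\in[0,1/12].
\]
This calculus verification is the main obstacle: note $h(0)=0$, and check that $h(a)/a$ is a cubic in $a$ whose derivative (a quadratic) remains strictly negative throughout $[0,1/12]$, so that $h(a)/a$ is strictly decreasing on this interval while its value $h(1/12)/(1/12)$ is still strictly positive. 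Hence $h(a)\ge 0$ on $[0,1/12]$, which gives $S\ge 0$. Finally, if $t(C,\tfrac12+U)=1/1024$, then $S+t(D_{10},U)=0$ forces $t(D_{10},U)=0$, and Lemma~\ref{lm:cycle} yields $U\equiv 0$; hence $C$ is quasirandom-forcing.
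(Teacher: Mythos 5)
Your proposal is correct, and it takes a genuinely different route from the paper's proof. Both arguments begin by reducing to the inequality $S\ge 0$ via identity~\eqref{eq:even}, and both end by invoking Lemma~\ref{lm:cycle}, but the inner bookkeeping differs substantially. The paper isolates the three terms involving only $a=t(Q_2,U)$ and completes a square, observing that $\tfrac{6a}{64^2}-\tfrac{3a^2}{64}+\tfrac{6a^3}{16}=6a\bigl(\tfrac{1}{64}-\tfrac{a}{4}\bigr)^2\ge 0$, then simply discards the positive terms $\tfrac{b^2}{4}$ and $\tfrac{d}{2}$ (as $\alpha_C(4,4),\alpha_C(8)>0$) and finishes with a chain of applications of Lemma~\ref{lm:positive} and Lemma~\ref{lm:Q2}, ultimately reducing to $\tfrac{13}{2048}-\tfrac{1}{16\pi^2}\ge 0$, a razor-thin margin. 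You instead retain $\tfrac{b^2}{4}$ and control it via $b\ge a^2$, and bundle all the single-$Q$ and $a$-mixed terms into the eigenvalue expansion $\sum_i\sigma_i^2 P_a(\lambda_i)$ with the clean factorization $P_a(x)=\tfrac{x(1-4x)(1-16x^2-48ax)}{128}$. This factorization makes the sign structure transparent, your monotonicity-of-$P_a(x)/x$ argument (a product of two positive decreasing factors) gives a linear-in-$a$ lower bound, and the final one-variable check $h(a)\ge 0$ on $[0,1/12]$ goes through with a comfortable margin (roughly $8\cdot 10^{-4}$ at $a=1/12$, versus roughly $10^{-5}$ for the paper's constant). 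Your route is slightly heavier on spectral machinery — it works directly with the eigenvalue expansion rather than only through Lemmas~\ref{lm:positive} and~\ref{lm:Q2} — but in exchange it yields a more structured and numerically robust certificate. I verified the factorization of $P_a$, the positivity of both positive roots $1/4$ and $\tfrac{-6a+\sqrt{36a^2+1}}{4}\ge\tfrac{\sqrt5-1}{8}>1/\pi^2$ over $a\in[0,1/12]$, the monotonicity of $P_a(x)/x$, the use of $b\ge a^2$ (established in the proof of Theorem~\ref{thm:QRgeneral4}), and the numerical estimates for $h(a)/a$ and its derivative; everything checks out.
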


\begin{proof}
Let $C$ be the orientation of a cycle from the statement of the theorem.
The values of $\alpha_C(2n_1,\ldots,2n_k)$ for feasible choices of $n_1,\ldots,n_k$ are given in Table~\ref{tab:10}.
Consider an antisymmetric kernel $U:[0,1]^2\to [-1/2,1/2]$.
We will show $t(C,1/2+U)\ge 2^{-10}$ and the equality holds if and only if $U\equiv 0$.

As in the proofs of Theorems~\ref{thm:10:401}, \ref{thm:10:415} and~\ref{thm:10:422},
we analyze the expression \eqref{eq:even}.
Using $t(Q_2,U)\le 1/12$, which holds by Lemma~\ref{lm:Q2},
we first show that
the sum of a part of the term with coefficient $\alpha_C(2)$, and
the two terms with the coefficients $\alpha_C(2,2)$ and $\alpha_C(2,2,2)$ is non-negative:
\begin{align*}
& \frac{6t(Q_2,U)}{64^2}+\frac{\alpha_C(2,2)t(Q_2,U)^2}{64}+\frac{\alpha_C(2,2,2)t(Q_2,U)^3}{16}\\
=\; & \frac{6t(Q_2,U)}{64^2}-\frac{3t(Q_2,U)^2}{64}+\frac{6t(Q_2,U)^3}{16}
    = 6t(Q_2,U)\left(\frac{1}{64}-\frac{t(Q_2,U)}{4}\right)^2 \ge 0.
\end{align*}
Since $\alpha_C(2)=2$ and $\frac{2}{256}-\frac{6}{64^2}=\frac{13}{2048}$ (and
$\alpha_C(4,4)$ and $\alpha_C(8)$ are positive),
we obtain using the identity \eqref{eq:even} that $t(C,1/2+U)$ is at least
\begin{align*}
& \frac{1}{1024}+\frac{13t(Q_2,U)}{2048}+\frac{\alpha_C(4)t(Q_4,U)}{64}+\frac{\alpha_C(6)t(Q_6,U)}{16}
                +\frac{\alpha_C(2,4)t(Q_2,U)t(Q_4,U)}{16}\\
	      & + \frac{\alpha_C(2,6)t(Q_2,U)t(Q_6,U)}{4}
	        + \gamma_C t(D_{10},U)\\
=\; & \frac{1}{1024}+\frac{13t(Q_2,U)}{2048}-\frac{2t(Q_4,U)}{64}-\frac{2t(Q_6,U)}{16}
                  -\frac{6t(Q_2,U)t(Q_4,U)}{16}\\
		 & + \frac{6t(Q_2,U)t(Q_6,U)}{4}
		  + t(D_{10},U) \\
=\; & \frac{1}{1024}+\frac{13t(Q_2,U)}{2048}-\frac{t(Q_4,U)}{32}-\frac{t(Q_6,U)}{8}
                  -\frac{3t(Q_2,U)t(Q_4,U)}{8}\\
		 & + \frac{3t(Q_2,U)t(Q_6,U)}{2}
		  + t(D_{10},U).
\end{align*}
Since $t(Q_2,U)\le 1/12$ by Lemma~\ref{lm:Q2},
it holds that $\frac{1}{8}-\frac{3t(Q_2,U)}{2}\ge 0$ and
so we can obtain using Lemma~\ref{lm:positive} that $t(C,1/2+U)$ is at least
\begin{align*}
& \frac{1}{1024}+\frac{13t(Q_2,U)}{2048}-\frac{t(Q_4,U)}{32}-\frac{3t(Q_2,U)t(Q_4,U)}{8}
                -\left(\frac{1}{8}-\frac{3t(Q_2,U)}{2}\right)t(Q_6,U) \\
		& + t(D_{10},U)\\
\ge\; & \frac{1}{1024}+\frac{13t(Q_2,U)}{2048}-\frac{t(Q_4,U)}{32}-\frac{3t(Q_2,U)t(Q_4,U)}{8}
                -\left(\frac{1}{8}-\frac{3t(Q_2,U)}{2}\right)\frac{t(Q_4,U)}{\pi^2}\\
		& + t(D_{10},U)\\
=\; & \frac{1}{1024}+\frac{13t(Q_2,U)}{2048}-\frac{(4+\pi^2)t(Q_4,U)}{32\pi^2}
                    -\frac{3(\pi^2-4)t(Q_2,U)t(Q_4,U)}{8\pi^2} + t(D_{10},U) \\
\ge\; & \frac{1}{1024}+\frac{13t(Q_2,U)}{2048}-\frac{(4+\pi^2)t(Q_2,U)}{32\pi^4}
                    -\frac{(\pi^2-4)t(Q_2,U)}{32\pi^4} + t(D_{10},U) \\
\ge\; & \frac{1}{1024}+\left(\frac{13}{2048}-\frac{1}{16\pi^2}\right)t(Q_2,U)+t(D_{10},U)
        \ge \frac{1}{1024}+t(D_{10},U).
\end{align*}
Since we have derived that $t(C,1/2+U)$ is at least $2^{-10}+t(D_{10},U)$,
we can conclude using Lemma~\ref{lm:cycle} that $t(C,1/2+U)=2^{-10}$ if and only if $U\equiv 0$,
i.e., $C$ is quasirandom-forcing.
\end{proof}

\section{Concluding remarks}
\label{sec:concl}

We finish with a brief discussion on our results.
We start with noting that the proofs of all our results that
an orientation of a cycle is quasirandom-forcing are based on spectral arguments.
While there exist non-spectral arguments for some of the orientations,
e.g., the last two orientations of the cycle of length four given in Table~\ref{tab:4},
we believe that this may be a general phenomenon,
i.e., that if an orientation of a cycle is quasirandom-forcing,
then there exists a proof based on spectral arguments.

It is an intriguing question to find (if one exists) a simple description
which orientations of cycles are quasirandom-forcing and which are not.
Perhaps the most natural idea is to attempt finding a classification
based on the number of forward and backward edges of an orientation;
this is however impossible as
there is an orientation of the cycle of length six with four forward and two backward edges that is quasirandom-forcing and
there is also such an orientation that is not quasirandom-forcing (see Table~\ref{tab:6}).

While we have phrased our arguments showing that
a certain orientation of a cycle is not quasirandom-forcing in terms of kernels,
all our arguments are actually constructive in the sense that
they give a description of a tournamenton witnessing that the orientation is not quasirandom-forcing.
We have attempted to provide the simplest possible arguments,
however,
it is worth noting that the following holds for orientations of cycles of length four, six and eight:
\emph{an orientation $C$ of an even cycle of length at most eight is quasirandom-forcing
      if and only if $\gamma_C$ and $t(C,1/2+U_T)-2^{-\|C\|}$ have the same sign}.
Indeed,
if an orientation $C$ is quasirandom-forcing,
then $\gamma_C$ and $t(C,1/2+U_T)-2^{-\|C\|}$ must have the same sign by Theorem~\ref{thm:transitive}.
The opposite implication,
i.e., if $C$ is not quasirandom-forcing,
then $\gamma_C$ and $t(C,1/2+U_T)-2^{-\|C\|}$ have different signs,
is demonstrated in Table~\ref{tab:trans468},
where all orientations of even cycles of length at most eight that
are not quasirandom-forcing are listed together with the values of $\gamma_C$ and $t(C,1/2+U_T)-2^{-\|C\|}$.

\begin{table}
\renewcommand{\arraystretch}{1.4}
\begin{center}
\begin{tabular}{|c|c|cccc|}
\hline
$C$ & \cepsfbox{qcycle-101.mps} &
      \cepsfbox{qcycle-202.mps} & \cepsfbox{qcycle-204.mps} & \cepsfbox{qcycle-205.mps} & \cepsfbox{qcycle-206.mps} \\[3pt]
\hline
$\gamma_C$ & $+1$ & $+1$ & $-1$ & $+1$ & $-1$ \\
\hline
$t(C,1/2+U_T)$ & $\frac{0}{24}$ & $\frac{1}{720}$ & $\frac{13}{720}$ & $\frac{6}{720}$ & $\frac{18}{720}$ \\
$t(C,1/2+U_T)-2^{-\|C\|}$ &
                 $\frac{-1.5}{24}$ & $\frac{-10.25}{720}$ & $\frac{1.75}{720}$ & $\frac{-5.25}{720}$ & $\frac{6.75}{720}$ \\[3pt]
\hline
\end{tabular}
\vskip 2ex
\tabcolsep=0.12cm
\begin{tabular}{|c|ccccccccc|}
\hline
$C$ & \cepsfbox{qcycle-301.mps} & \cepsfbox{qcycle-303.mps} & \cepsfbox{qcycle-304.mps} & \cepsfbox{qcycle-306.mps} &
      \cepsfbox{qcycle-308.mps} & \cepsfbox{qcycle-309.mps} & \cepsfbox{qcycle-311.mps} & \cepsfbox{qcycle-312.mps} &
      \cepsfbox{qcycle-314.mps} \\[3pt]
\hline
$\gamma_C$ & $+1$ & $+1$ & $+1$ & $+1$ & $+1$ & $+1$ & $-1$ & $+1$ & $-1$ \\
\hline
$t(C,1/2+U_T)$ & $\frac{0}{40320}$ & $\frac{6}{40320}$ & $\frac{26}{40320}$ &
                 $\frac{54}{40320}$ & $\frac{20}{40320}$ & $\frac{68}{40320}$ &
                 $\frac{297}{40320}$ & $\frac{110}{40320}$ & $\frac{423}{40320}$ \\
$t(C,1/2+U_T)-2^{-\|C\|}$ &
                 $\frac{-157.5}{40320}$ & $\frac{-151.5}{40320}$ & $\frac{-131.5}{40320}$ &
                 $\frac{-103.5}{40320}$ & $\frac{-137.5}{40320}$ & $\frac{-89.5}{40320}$ &
                 $\frac{139.5}{40320}$ & $\frac{-47.5}{40320}$ & $\frac{265.5}{40320}$ \\[3pt]
\hline		 
\end{tabular}
\end{center}
\caption{The numerical values demonstrating that an orientation $C$ of an even cycle of length at most eight
         is quasirandom-forcing if and only if $\gamma_C$ and $t(C,1/2+U_T)-2^{-\|C\|}$ have the same sign.}
\label{tab:trans468}
\end{table}

Unfortunately, such equivalence is not true in general, as it fails for one orientation of a cycle of length ten:
the orientation given on the last line in Table~\ref{tab:10}.
For this orientation~$C$,
it holds that $\gamma_C=+1$ but $t(C,1/2+U_T)=3753/3628800>2^{-10}$.
So, this attempt also fails.

Perhaps, the following (likely much easier) problem could help with understanding the boundary
between orientations that are quasirandom-forcing and that are not.

\begin{problem}
Let $p_{2n}$ be the probability that the orientation of a cycle of length $2n$, $n\ge 2$,
obtained by orienting each edge randomly in each direction with probability $1/2$ independently of other edges
is quasirandom-forcing.
Is the sequence $(p_{2n})_{n\in\NN}$ converging? If so, what is its limit?
\end{problem}

We remark that our classification results imply $p_4=7/8=0.875$,
$p_6=7/16\approx 0.438$, $p_8=71/128\approx 0.555$ and $p_{10}=207/512\approx 0.404$.

\bibliographystyle{bibstyle}
\bibliography{qcycle}

\end{document}